\numberwithin{equation}{section} 
\newtheorem{theorem}{Theorem}[section]
\newtheorem{lem}[theorem]{Lemma}
\newtheorem{cor}[theorem]{Corollary}
\newtheorem{prop}[theorem]{Proposition}
\newtheorem{definition}[theorem]{Definition}
\theoremstyle{remark}
\newtheorem{rem}[theorem]{Remark}
\newcommand{\R}{\mathbb{R}}
\newcommand{\N}{\mathbb{N}}
\newcommand{\Ltwoa}{L^2(\Omega)}
\newcommand{\Ltwoh}{L^2(\R^3_+)}
\newcommand{\Ltwohdom}{L^2(G)}
\newcommand{\Ltwohn}[1]{\|#1\|_{\Ltwoh}}
\newcommand{\Ltwohndom}[1]{\|#1\|_{\Ltwohdom}}
\newcommand{\Ltwoan}[1]{\|#1\|_{L^2_\gamma(\Omega)}}
\newcommand{\Ltwoanwgamma}[1]{\|#1\|_{L^2(\Omega)}}
\newcommand{\Ltwohnt}[1]{\Gnorm{0}{#1}}
\newcommand{\Ltwodomh}{L^2(G)}
\newcommand{\Ltwodomanv}[2]{\|#1\|_{L^2_\gamma(J_{#2} \times G)}}
\newcommand{\Hh}[1]{H^{#1}(\R^3_+)}
\newcommand{\Hhdom}[1]{H^{#1}(G)}
\newcommand{\Ha}[1]{H^{#1}(\Omega)}
\newcommand{\Hadom}[1]{H^{#1}(J \times G)}
\newcommand{\Hagamma}[1]{H^{#1}_\gamma(\Omega)}
\newcommand{\HaPdom}[1]{H^{#1}(J_\tau \times G)}
\newcommand{\Hhn}[2]{\|#2\|_{\Hh{#1}}}
\newcommand{\Hhndom}[2]{\|#2\|_{\Hhdom{#1}}}
\newcommand{\Han}[2]{\|#2\|_{\Ha{#1}}}
\newcommand{\Handom}[2]{\|#2\|_{H^{#1}(J \times G)}}
\newcommand{\HanPdom}[2]{\|#2\|_{H^{#1}(J_\tau \times G)}}
\newcommand{\Hangamma}[2]{\|#2\|_{\Hagamma{#1}}}
\newcommand{\Hangammadom}[2]{\|#2\|_{H^{#1}_\gamma(J \times G)}}
\newcommand{\HangammaPdom}[2]{\|#2\|_{H^{#1}_\gamma(J_\tau \times G)}}
\newcommand{\G}[1]{G_{#1}(\Omega)}
\newcommand{\Gnorm}[2]{\|#2\|_{G_{#1, \gamma}(\Omega)}}
\newcommand{\Gnormdom}[2]{\|#2\|_{G_{#1, \gamma}(J \times G)}}
\newcommand{\Gnormdomwg}[2]{\|#2\|_{G_{#1}(J \times G)}}
\newcommand{\Gnormwg}[2]{\|#2\|_{G_{#1}(\Omega)}}
\newcommand{\Gdom}[1]{G_{#1}(J \times G)}
\newcommand{\GdomP}[1]{G_{#1}(J_\tau \times G)}
\newcommand{\Gdomvar}[1]{\tilde{G}_{#1}(J \times G)}
\newcommand{\GdomvarP}[1]{\tilde{G}_{#1}(J_\tau \times G)}
\newcommand{\Gdomnorm}[2]{\|#2\|_{G_{#1, \gamma}(J \times G)}}
\newcommand{\Gdomnormwg}[2]{\|#2\|_{G_{#1}(J \times G)}}
\newcommand{\GdomnormP}[2]{\|#2\|_{G_{#1, \gamma}(J_\tau \times G)}}
\newcommand{\GdomnormwgP}[2]{\|#2\|_{G_{#1}(J_\tau \times G)}}
\newcommand{\Gdomnormv}[3]{\|#2\|_{G_{#1,\gamma}(J_{#3} \times G)}}
\newcommand{\Gdomnormwgv}[3]{\|#2\|_{G_{#1}(J_{#3} \times G)}}
\newcommand{\Fuwl}[2]{F_{#1}^{\operatorname{#2}}(\Omega)}
\newcommand{\Fupdwl}[3]{F_{#1,#3}^{\operatorname{#2}}(\Omega)}
\newcommand{\Fuwlk}[3]{F_{#1,#3}^{\operatorname{#2}}(\Omega)}
\newcommand{\Fupdwlk}[4]{F_{#1,#4,#3}^{\operatorname{#2}}(\Omega)}
\newcommand{\Fdom}[1]{F_{#1}(J \times G)}
\newcommand{\Fdomk}[2]{F_{#1,#2}(J \times G)}
\newcommand{\FdomP}[1]{F_{#1}(J_\tau \times G)}
\newcommand{\Fdompdw}[2]{F_{#1,#2}(J \times G)}
\newcommand{\Fdomuwl}[2]{F_{#1}^{\operatorname{#2}}(J \times G)}
\newcommand{\Fdomupdwl}[3]{F_{#1,#3}^{\operatorname{#2}}(J \times G)}
\newcommand{\Fdompdk}[3]{F_{#1,#3,#2}(J \times G)}
\newcommand{\Fdomnorm}[2]{\|#2\|_{F_{#1}(J \times G)}}
\newcommand{\Fvarwkdom}[2]{F^0_{#1,#2}(G)}
\newcommand{\Fcoeff}[2]{F_{#1, \operatorname{coeff}}^{\operatorname{#2}}(\R^3_+)}
\newcommand{\Fvarnormdom}[2]{\|#2\|_{F^0_{#1}(G)}}
\newcommand{\Fnormdom}[2]{\|#2\|_{F_{#1}(J \times G)}}
\newcommand{\E}[1]{E_{#1}(J \times \partial \R^3_+)}
\newcommand{\Edom}[1]{E_{#1}(J \times \partial G)}
\newcommand{\Enormwg}[2]{\|#2\|_{E_{#1}(J \times \partial \R^3_+)}}
\newcommand{\Enorm}[2]{\|#2\|_{E_{#1, \gamma}(J \times \partial \R^3_+)}}
\newcommand{\Enormwgdom}[2]{\|#2\|_{E_{#1}(J \times \partial G)}}
\newcommand{\Enormdom}[2]{\|#2\|_{E_{#1, \gamma}(J \times \partial G)}}
\newcommand{\EnormdomP}[2]{\|#2\|_{E_{#1, \gamma}(J_\tau \times \partial G)}}
\newcommand{\ml}[3]{\mathcal{ML}^{#1}(#2,#3)}
\newcommand{\mlwl}[4]{\mathcal{ML}^{#1,\operatorname{#4}}(#2,#3)}
\newcommand{\mlpd}[3]{\mathcal{ML}_{\operatorname{pd}}^{#1}(#2,#3)}
\newcommand{\mlpdwl}[4]{\mathcal{ML}_{\operatorname{pd}}^{#1, \operatorname{#4}}(#2,#3)}
\newcommand{\curl}{\operatorname{curl}}
\renewcommand{\div}{\operatorname{div}}
\newcommand{\tr}{\operatorname{tr}}
\newcommand{\clJ}{\overline{J}}
\newcommand{\dist}{\operatorname{dist}}
\newcommand{\supp}{\operatorname{supp}}
\newcommand{\image}{\operatorname{im}}
\renewcommand{\epsilon}{\varepsilon}
\renewcommand{\phi}{\varphi}
\title[Local wellposedness of nonlinear Maxwell equations]{Local wellposedness of nonlinear Maxwell equations with perfectly conducting boundary conditions}
\author{Martin Spitz}
\address[Martin Spitz]{Department of Mathematics, Karlsruhe Institute of Technology, Englerstr. 2, 76131 Karlsruhe}
\email{martin.spitz@kit.edu}
\subjclass[2010]{35L50, 35L60, 35Q61}
\keywords{nonlinear Maxwell equations, perfectly conducting boundary conditions, quasilinear initial 
boundary value problem, hyperbolic system, local wellposedness, blow-up criterion, continuous dependance}
\begin{document}
\begin{abstract}
 In this article we develop the local wellposedness theory for quasilinear Maxwell equations in $H^m$ for 
 all $m \geq 3$ on domains with perfectly conducting boundary conditions. The macroscopic Maxwell equations with 
 instantaneous material laws for the polarization and the magnetization lead to a quasilinear first order hyperbolic 
 system whose wellposedness in $H^3$ is not covered by the available results in this case. 
 We prove the existence and uniqueness of local solutions in $H^m$ with $m \geq 3$ of the corresponding initial boundary value problem 
 if the material laws and the data are accordingly regular and compatible. We further characterize finite time blowup in 
 terms of the Lipschitz norm and we show that the solutions depend continuously on their data. Finally, we establish
 the finite propagation speed of the solutions.
\end{abstract}
 \maketitle
 
 \section{Introduction and main result}
 \label{SectionIntroduction}
 Describing the theory of electromagnetism, the Maxwell equations are one of the fundamental partial 
 differential equations in physics. Equipped with instantaneous nonlinear material laws, 
 they form a quasilinear hyperbolic system. On the full space $\R^d$, for such systems 
 Kato has established a satisfactory local wellposedness theory in $H^s(\R^d)$ for $s > \frac{d}{2} + 1$, 
 see~\cite{KatoQuasilinearHyperbolicSystems}. However, on a domain $G \neq \R^3$, the Maxwell 
 equations with the boundary conditions of a perfect conductor do not belong to the classes of hyperbolic systems 
 for which one knows such a wellposedness theory.
 The available results need much more regularity and exhibit a loss of derivatives, 
 see~\cite{Gues}.
 
 In this work we provide a complete local wellposedness theory for quasilinear Maxwell equations on 
 domains with a perfectly conducting boundary. We prove the existence and uniqueness of solutions in 
 $H^m(G)$ for all $m \geq 3$ if the material laws and the data are accordingly regular and compatible, 
 provide a blow-up condition in the Lipschitz norm, and show the continuous dependance of the solutions on 
 the data, see also~\cite{SpitzDissertation}. 
 These results are based on a detailed regularity theory for the corresponding nonautonomous linear 
 equation which we developed in the companion paper~\cite{SpitzMaxwellLinear}.
 Here and in~\cite{SpitzMaxwellLinear}, we crucially use the special structure 
 of the Maxwell system.
 
The macroscopic Maxwell equations in a domain $G$ read 
\begin{align}
\label{EquationMaxwellSystem}
\begin{aligned}
 \partial_t \boldsymbol{D} &= \curl \boldsymbol{H} -  \boldsymbol{J}, \qquad  &&\text{for } x \in G, \quad  &&t \in (t_0,T), \\
 \partial_t \boldsymbol{B} &= -\curl \boldsymbol{E},  	 &&\text{for } x \in G, &&t \in (t_0,T),  \\
 \div \boldsymbol{D} &= \rho,  \quad \div \boldsymbol{B} = 0, &&\text{for } x \in G, &&t \in (t_0,T),  \\
 \boldsymbol{E} \times \nu &= 0, \quad  \boldsymbol{B} \cdot \nu = 0, &&\text{for } x \in \partial G, &&t \in (t_0,T),  \\
 \boldsymbol{E}(t_0) &= \boldsymbol{E}_0, \quad \boldsymbol{H}(t_0) = \boldsymbol{H}_0, &&\text{for } x \in G,
\end{aligned}
 \end{align}
for an initial time $t_0 \in \R$. Here $\boldsymbol{E}(t,x), \boldsymbol{D}(t,x) \in \R^3$ denote the 
electric and 
$\boldsymbol{H}(t,x),\boldsymbol{B}(t,x) \in \R^3$ the magnetic 
fields. The charge density $\rho(t,x)$ is related with the current density $\boldsymbol{J}(t,x) \in \R^3$  via
\begin{equation*}
 \rho(t) = \rho(t_0) - \int_{t_0}^t \div \boldsymbol{J}(s) ds
\end{equation*}
for all $t \geq t_0$. 
In~\eqref{EquationMaxwellSystem} we consider the Maxwell system with the boundary conditions of a 
perfect conductor, where $\nu$ denotes the outer normal unit vector of~$G$. 
System~\eqref{EquationMaxwellSystem} has to be complemented by constitutive relations 
between the electric fields and the magnetic fields, the so called material laws. We 
choose $\boldsymbol{E}$ and $\boldsymbol{H}$ as state variables and express 
$\boldsymbol{D}$ and $\boldsymbol{B}$ in terms of $\boldsymbol{E}$ and $\boldsymbol{H}$.
The actual form of these material laws is a question of modelling and different kinds have been 
considered in the literature. The so 
called retarded material laws  assume that the fields $\boldsymbol{D}$ and $\boldsymbol{B}$ depend also on the past of $\boldsymbol{E}$ and $\boldsymbol{H}$, 
see~\cite{BabinFigotin} and~\cite{RoachStratisYannacopoulos} for instance. 
In dynamical material laws 
the material response is modelled by additional evolution equations for the polarization or magnetization, see e.g.~\cite{AmmariHamdache}, 
\cite{DumasSueur}, \cite{Jochmann}, or~\cite{JolyMetivierRauch}.
In this work we concentrate on the instantaneous material laws, see~\cite{BuschEtAl} and~\cite{FabrizioMorro}. Here 
the fields $\boldsymbol{D}$ and $\boldsymbol{B}$ are given as local functions of $\boldsymbol{E}$ and 
$\boldsymbol{H}$, i.e., we assume that there are functions $\theta_1, \theta_2 \colon G \times \R^6 \rightarrow \R^3$ 
such that $\boldsymbol{D}(t,x) = \theta_1(x,\boldsymbol{E}(t,x), \boldsymbol{H}(t,x))$ and 
$\boldsymbol{B}(t,x) = \theta_2(x,\boldsymbol{E}(t,x), \boldsymbol{H}(t,x))$. The most prominent 
example is the so called Kerr nonlinearity, where
\begin{equation}
\label{EquationKerrNonlinearity}
	\boldsymbol{D} =  \boldsymbol{E} + \vartheta |\boldsymbol{E}|^2 \boldsymbol{E}, \quad \boldsymbol{B} = \boldsymbol{H},
\end{equation}
with $\vartheta \colon G \rightarrow \R^{3 \times 3}$ and the vacuum permittivity and permeability 
set equal to~$1$ for convenience. 
We further assume that the current density decomposes as $\boldsymbol{J} = \boldsymbol{J}_0 + \sigma_1(\boldsymbol{E}, \boldsymbol{H}) \boldsymbol{E}$, 
where $\boldsymbol{J}_0$ is an external current density and $\sigma_1$ denotes the conductivity. If we insert these material laws into~\eqref{EquationMaxwellSystem} 
and formally differentiate, we obtain
\begin{align*}
	(\partial_t \boldsymbol{D}, \partial_t \boldsymbol{B}) = \partial_{(\boldsymbol{E}, \boldsymbol{H})} \theta(x,\boldsymbol{E}, \boldsymbol{H}) \partial_t (\boldsymbol{E}, \boldsymbol{H}) = (\curl \boldsymbol{H} - \boldsymbol{J}, - \curl \boldsymbol{E})
\end{align*}
for the evolutionary part of~\eqref{EquationMaxwellSystem}. 
The resulting equation is a first order quasilinear hyperbolic system. 
In order to write~\eqref{EquationMaxwellSystem}  in the standard form of 
first order systems, we introduce the matrices
\begin{align*}
 J_1 = \begin{pmatrix}
	  0 &0 &0 \\
	  0 &0 &-1 \\
	  0 &1 &0
       \end{pmatrix},
        \quad
 J_2 =  \begin{pmatrix}
         0 &0 &1 \\
         0 &0 &0 \\
         -1 &0 &0
        \end{pmatrix},
        \quad
 J_3 = \begin{pmatrix}
        0 &-1 &0 \\
        1 &0 &0 \\
        0 &0 &0
       \end{pmatrix}
\end{align*}
and
\begin{align}
\label{EquationDefinitionOfAj}
 A_j^{\operatorname{co}} = \begin{pmatrix}
        0 & -J_j \\
        J_j &0 
       \end{pmatrix}
\end{align}
for $j = 1, 2, 3$. Observe that $\sum_{j = 1}^3 J_j \partial_j = \curl$. Writing $\chi$ for $\partial_{(\boldsymbol{E}, \boldsymbol{H})} \theta$, 
$f = (- \boldsymbol{J}_0,0)$, $\sigma = \begin{pmatrix} \sigma_1 &0 \\ 0 &0 \end{pmatrix}$, and 
using $u = (\boldsymbol{E}, \boldsymbol{H})$ as new variable, we finally obtain
\begin{equation}
\label{EquationMaxwellAsFirstOrderSystemEvolutionPart}
	\chi(u) \partial_t u + \sum_{j = 1}^3 A_j^{\operatorname{co}} \partial_j u + \sigma(u) u = f.
\end{equation}
Under mild regularity conditions, e.g., $\boldsymbol{D}, 
\boldsymbol{B} \in C([t_0,T], \Hhdom{1}) \cap C^1([t_0,T], \Ltwohdom)$ and 
$\operatorname{div} \boldsymbol{J} \in L^1((t_0,T), \Ltwoh)$, 
a solution 
$u = (\boldsymbol{E}, \boldsymbol{H})$ of~\eqref{EquationMaxwellAsFirstOrderSystemEvolutionPart}
satisfies the divergence conditions in~\eqref{EquationMaxwellSystem} if they hold at the initial time
$t_0$. Similarly, the second part of the boundary conditions, i.e., $\boldsymbol{B} \cdot \nu = 0$ on 
$(t_0,T) \times \partial G$ is true if $\boldsymbol{E} \times \nu = 0$ on $(t_0,T) \times \partial G$ 
and $\boldsymbol{B}(t_0) \cdot \nu = 0$ on $\partial G$. We refer to~\cite[Lemma~7.25]{SpitzDissertation} 
for details. Defining the matrix
\begin{equation}
\label{EquationDefinitionB}
	B = \begin{pmatrix}
      0 &\nu_3 &-\nu_2 &0 &0 &0 \\
      -\nu_3 &0 &\nu_1 &0 &0 &0 \\
      \nu_2 &-\nu_1 &0 &0 &0 &0 
     \end{pmatrix}
\end{equation}
on $\partial G$, we can then cast system~\eqref{EquationMaxwellSystem} into the equivalent first order quasilinear hyperbolic initial boundary value problem
 \begin{align}
 \label{EquationNonlinearIBVP} 
  \left\{\begin{aligned}
   \chi(u)\partial_t u + \sum_{j=1}^3 A_j^{\operatorname{co}} \partial_j u + \sigma(u) u &= f, \quad &&x \in G, \quad &&t \in J; \\
   B u &= g, &&x \in \partial G, &&t \in J; \\
   u(t_0) &= u_0, &&x \in  G,
 \end{aligned}\right.
\end{align}
with additional conditions for the initial value.
Here $J = (t_0, T)$ is an open interval. We allow for inhomogeneous boundary values 
which are not only interesting from the mathematical point of view, but also have physical relevance, 
see~\cite{DautrayLionsI}. We further
assume that $\chi$ is symmetric and at least locally positive definite, which includes~\eqref{EquationKerrNonlinearity}.
Such assumptions are quite standard already for linear Maxwell equations.

The initial value problem on the full space (without boundary conditions) corresponding to~\eqref{EquationNonlinearIBVP} has been solved by 
Kato in~\cite{KatoQuasilinearHyperbolicSystems} in a more general setting. 
Kato freezes a function $\hat{u}$ in the nonlinearities and employs a priori estimates 
for the corresponding linear problem previously obtained in~\cite{KatoLinearEvolutionEquationsI} 
and~\cite{KatoLinearEvolutionEquationsII}. Then a fixed point argument yields
a solution of the quasilinear problem. However, Kato works in a general functional analytic setting 
which does not cover the Maxwell equations with perfectly conducting boundary 
conditions. An alternative approach is to use energy techniques 
in order to derive the a priori estimates for the linear problem needed to apply a fixed point argument. 
This strategy was applied successfully to quasilinear hyperbolic initial boundary value problems with 
noncharacteristic boundary (i.e., where the boundary matrix $A(\nu) = \sum_{j = 1}^d A_j \nu_j$ is 
nonsingular), see~\cite{BenzoniGavage,MetivierEtAl, RauchMassey}. 
Systems with characteristic boundary pose several additional difficulties. In particular, no general theory 
for the corresponding linear problems is available and even a loss of regularity there is possible, 
see~\cite{MajdaOsher}. In~\cite{OhkuboLinear} an additional structural assumption is proposed in order 
to prevent this loss of regularity  and a quasilinear result is derived from it in~\cite{OhkuboQuasilinear}. 
However, the Maxwell system does not fulfill this structural assumption. 
A different approach is taken in~\cite{Gues} for quasilinear hyperbolic initial boundary value 
problems with characteristic boundary. 
The results there require high regularity (at least $H^6(G)$), are given in 
Sobolev-like spaces incorporating weights in the normal direction, and contain a loss of regularity. In~\cite{PicardZajaczkowski} 
the authors focus on Maxwell's equations, but treat different boundary conditions (belonging to 
the class treated in~\cite{MajdaOsher}, see also~\cite{CagnolEller}) than the perfectly conducting ones. 
Moreover, only the existence of a local solution of the quasilinear system is obtained there.

Somehow surprisingly, the quasilinear Maxwell system~\eqref{EquationMaxwellSystem} with perfectly conducting 
boundary has not yet been treated in the natural space $\Hhdom{3}$ and even the basic questions on local existence and uniqueness are still open. 
We will close this gap 
by providing a complete 
local wellposedness theory. We will prove that
\begin{enumerate}
	\item the system~\eqref{EquationNonlinearIBVP} has a unique maximal solution $u$ in the function space \linebreak
	      $\bigcap_{j = 0}^m C^j((T_{-}, T_+), \Hhdom{m-j})$ for all $m \in \N$ with $m \geq 3$ provided the data are sufficiently regular 
	      and compatible (in the sense of~\eqref{EquationNonlinearCompatibilityConditions} below) 
	      with the material law,
	\item finite existence time can be characterized by blowup in the Lipschitz-norm, 
	\item the solution depends continuously on the data.
\end{enumerate}
We refer to Theorem~\ref{TheoremLocalWellposednessNonlinear} for the precise statement. 
The derivation of global properties for~\eqref{EquationNonlinearIBVP} is a highly nontrivial task.
In particular, it is already known 
that global existence cannot be expected for all data. Blow-up examples in the Lipschitz-norm are 
given in~\cite{Majda}. For different boundary conditions than we consider, blow-up examples in the $H(\curl)$-norm are 
provided in~\cite{DAnconaNicaiseSchnaubelt}.

In order to prove the local wellposedness theorem, we follow the strategy mentioned above. We freeze 
a function $\hat{u}$ in the nonlinearities in~\eqref{EquationNonlinearIBVP} and employ energy 
estimates to set up a fixed point argument. However, energy techniques work in $L^2$-based spaces 
but require Lipschitz coefficients, see~\cite{Eller}. The solutions there are constructed in 
$C(\clJ, \Ltwohdom)$ but in view of our fixed point argument, we need that $\chi(u)$ is contained in 
$W^{1,\infty}(J \times G)$. This gap in integrability is bridged by Sobolev's embedding. If a solution $u$ 
belongs to $C(\clJ, \Hhdom{m}) \cap C^1(\clJ, \Hhdom{m-1})$ for a number $m \in \N_0$, then $\chi(u)$ is an element of $W^{1,\infty}(J \times G)$ 
if $m > \frac{3}{2} + 1$ and $\chi$ is reasonable regular. We thus require $m \geq 3$. Reasonable regular here means that $\chi$ belongs to $C^m(G \times \mathcal{U}, \R^{6 \times 6})$ 
and that $\chi$ and all of its derivatives up to order $m$ are bounded on $G \times \mathcal{U}_1$
for every compact subset $\mathcal{U}_1$ of $\mathcal{U}$, where $\mathcal{U}$ is a domain in~$\R^6$.
For later reference we introduce the spaces 
\begin{align*}
 \ml{m,n}{G}{\mathcal{U}} &= \{\theta \in C^m(G \!\times \! \mathcal{U}, \R^{n \times n}) \colon \text{For all } \alpha \in \N_0^9 \text{ with }
 |\alpha| \leq m \text{ and } \mathcal{U}_1 \Subset \mathcal{U} \!: \\
 &\hspace{12em} \sup_{(x,y) \in G \times \mathcal{U}_1} |\partial^\alpha \theta(x,y)| < \infty\}, \\
 \mlpd{m,n}{G}{\mathcal{U}} &= \{\theta \in \ml{m,n}{G}{\mathcal{U}} \colon \text{There exists } \eta > 0 \text{ with } 
 \theta \! = \! \theta^T \! \geq \! \eta \text{ on }  G \! \times \! \mathcal{U}\}
\end{align*}
for $n \in \N$. Actually, we only need $n = 1$ or $n = 6$. Although $C(\clJ, \Hhdom{m}) \cap C^1(\clJ, \Hhdom{m-1})$ embeds into 
$W^{1,\infty}(J \times G)$ for $m \geq 3$, the techniques we are going to apply to solve~\eqref{EquationNonlinearIBVP} 
require that its solution has the same amount of regularity in time 
as in space. We thus construct solutions of~\eqref{EquationNonlinearIBVP} in the function spaces
\begin{equation}
	\label{EquationDefinitionGm}
	\Gdom{m} = \bigcap_{j = 0}^m C^j(\clJ, \Hhdom{m-j})^6
\end{equation}
for all $m \in \N_0$, cf.~\cite{BenzoniGavage,MetivierEtAl,RauchMassey}.
(We do not indicate the dimension of $u$ below.)
 Defining the function $e_{-\gamma} \colon t \mapsto e^{- \gamma t}$, we equip the space $\Gdom{m}$ with the family of time-weighted norms
\begin{equation*}
	\Gnormdom{m}{v} = \max_{0 \leq j \leq m} \|e_{-\gamma} \partial_t^j v\|_{L^\infty(J, \Hhdom{m-j})}
\end{equation*}
for all $\gamma \geq 0$. In the case $\gamma = 0$, we also write $\Gnormdomwg{m}{v}$ instead of $\|v\|_{G_{m,0}(J \times G)}$.
Analogously, any time-space norm indexed by $\gamma$ means the usual norm complemented by
the time weight $e_{-\gamma}$ in the following.
We also need the spaces $\Gdomvar{m}$, consisting of those functions $v$ 
with $\partial^\alpha v \in L^\infty(J, \Ltwohdom)$ for all $\alpha \in \N_0^4$ with 
$0 \leq |\alpha| \leq m$. We equip them with the same family of norms as $\Gdom{m}$. 

The paper is organized as follows. In Section~\ref{SectionCalculusCompatibilityConditions} we first 
study the regularity properties of $\theta(u)$ for $\theta \in \ml{m}{G}{\mathcal{U}}$ and $u \in \Gdomvar{m}$. 
Based on Fa{\'a} di Bruno's formula we find suitable function spaces and provide corresponding estimates 
for these compositions. We further discuss the compatibility conditions. These are necessary conditions 
for the existence of a $\Gdom{m}$-solution, which arise since the differential equation and the 
boundary condition in~\eqref{EquationNonlinearIBVP} both yield information about $u$ and its 
time derivatives on $\{t_0\} \times \partial G$.

In Section~\ref{SectionLocalExistence} we then follow the strategy described above to deduce 
existence and uniqueness of a solution of~\eqref{EquationNonlinearIBVP}. We freeze a function 
$\hat{u}$ from $\Gdomvar{m}$ and study the arising linear problem
\begin{equation}
	\label{EquationIBVPIntroduction}
	 \left\{\begin{aligned}
   A_0 \partial_t u + \sum_{j=1}^3 A_j^{\operatorname{co}} \partial_j u + D u &= f, \quad &&x \in G, \quad &&t \in J; \\
   B u &= g, &&x \in \partial G, &&t \in J; \\
   u(t_0) &= u_0, &&x \in  G;
 \end{aligned}\right.
\end{equation}
where $A_0 = \chi(\hat{u})$ and $D = \sigma(\hat{u})$. Already the higher order 
energy estimates for this linear problem are difficult to obtain since the Maxwell system 
has a characteristic boundary (i.e., $A(\nu) = \sum_{j = 1}^3 A_j^{\operatorname{co}} \nu_j$ is singular). 
Here we rely on~\cite{SpitzMaxwellLinear}, where the structure of the Maxwell equations is heavily exploited in order 
to derive these estimates. We show how the results from~\cite{SpitzMaxwellLinear} can be employed to set up a fixed point 
argument which yields unique local solutions of the quasilinear problem~\eqref{EquationNonlinearIBVP}.
In order to characterize a finite maximal existence time in terms of the Lipschitz norm of the solution,
the a priori estimates from~\eqref{EquationIBVPIntroduction} 
are not good enough. We have to use in Section~\ref{SectionBlowUpCriteria} that the coefficient $A_0$ equals $\chi(u)$ with $u$ being 
the solution of~\eqref{EquationNonlinearIBVP}. Combining Moser-type inequalities and
estimates from~\cite{SpitzMaxwellLinear} relying on the structure of~\eqref{EquationIBVPIntroduction}, 
we can then control the $\Hhdom{m}$-norm of $u$ by its Lipschitz norm. 
In Section~\ref{SectionContinuousDependance} we study the continuous dependance of the solutions on the data. 
Once again the estimates from~\cite{SpitzMaxwellLinear} cannot be applied directly as they do not prevent
the loss of regularity due to the quasilinearity when the difference of two solutions is considered. 
We therefore have to combine the techniques from~\cite{SpitzMaxwellLinear} with certain decompositions 
of $u$ already used in the full space 
case, cf.~\cite{BahouriCheminDanchin}.
Finally, we also prove that solutions of~\eqref{EquationNonlinearIBVP} have finite propagation speed. 
Here, we use a simple and flexible method relying on weighted energy estimates for the linear problem, cf.~\cite{BahouriCheminDanchin}. 

\textbf{Standing notation:}
Let $m$ be a nonnegative integer. We denote by $G$ a domain in $\R^3$ with 
compact $C^{\max\{m,3\} + 2}$-boundary (the assumption that $\partial G$ is compact can be loosened, 
see~\cite{SpitzDissertation} for details) or the half-space $\R^3_+ = \{x \in \R^3 \colon x_3 > 0\}$.
The set $\mathcal{U}$ is a domain in $\R^6$.
We further write $L(A_0, \ldots, A_3, D)$ for 
 the differential operator $A_0 \partial_t + \sum_{j = 1}^3 A_j \partial_j + D$ and $\partial_0$ for 
 $\partial_t$.
 By $J$ we mean an open time interval and we set $\Omega = J \times \R^3_+$. Finally, the image of 
 a function $v$ is designated by $\image v$.

 \section{Calculus and compatibility conditions}
 \label{SectionCalculusCompatibilityConditions}
 
 In the study of quasilinear problems one often has to control compositions $\theta(v)$ in higher regularity 
in terms of $v$. Derivatives of $\theta(v)$ can be expressed by the so called Fa\'a di Bruno's 
formula, which is therefore widespread in the 
literature, see e.g.~\cite{BahouriCheminDanchin,BenzoniGavage}. More important than the formula itself are 
the estimates which follow from it. We provide both in the next lemma.
Its proof is an iterative application of the chain and product rule combined with Sobolev's embedding for the estimates 
and it works as the proof of~\cite[Lemma~7.1]{SpitzDissertation}. We work with functions $v$ taking values in $\R^6$ 
in the following.
 \begin{lem}
 \label{LemmaHigherOrderChainRule}
  Let $m \in \N$ and $\tilde{m} = \max\{m,3\}$.  Let $\mathcal{U}_1$ 
  be a compact subset of $\mathcal{U} \subseteq \R^6$.
 \begin{enumerate}[leftmargin = 2em]
  \item \label{ItemHigherOrderChainRuleInG} Let $\theta \in \ml{m,1}{G}{\mathcal{U}}$ and
    $v \in \Gdomvar{\tilde{m}}$ with $\image v \subseteq \mathcal{U}$.
	The function $\theta(v)$  belongs to the function space $W^{1,\infty}(J \times G)$ and all its derivatives up to order $m$
	are contained in $L^\infty(J, \Ltwohdom) + L^\infty(J \times G)$. 
	For $\gamma_1,\ldots,
	\gamma_j \in \N_0^4$ with $|\gamma_i| \leq m$, $1 \leq j \leq |\alpha|$, and $\alpha \in \N_0^4$ with 
	$1 \leq|\alpha| \leq m$ there  exist constants 
	$C(\alpha, \gamma_1, \ldots, \gamma_j)$ such that
	\begin{align}
	\label{EquationFormulaHigherOrderChainRuleInG}
	 \partial^\alpha \theta(v) = \sum_{\substack{\beta, \gamma \in \N_0^4, \beta_0 = 0 \\  \beta + \gamma = \alpha}} \,
	 \sum_{1 \leq j \leq |\gamma|} \, &\sum_{\substack{\gamma_1,\ldots,\gamma_j \in \N_0^4 \setminus\{0\} \\ \sum \gamma_i = \gamma}}
	 \, \sum_{l_1,\ldots,l_j = 1}^6
	  C(\alpha, \gamma_1, \ldots, \gamma_j) \\
	  &\hspace{4em}\cdot (\partial_{y_{l_j}} \cdots \partial_{y_{l_1}} \partial_x^{(\beta_1,\beta_2,\beta_3)} \theta)(v)
	      \prod_{i=1}^j \partial^{\gamma_i} v_{l_i}. \nonumber
	\end{align}
	Moreover, there exists a constant $C(\theta, m,  \mathcal{U}_1)$ such that
	\begin{align}
	\label{EquationEstimateForHigherOrderChainRuleInG}
	 \| \partial^{\alpha} \theta(v)\|_{L^\infty(J, \Ltwohdom) + L^\infty(J \times G)} \leq C(\theta, m, \mathcal{U}_1) (1 + \Gdomnormwg{\tilde{m}}{v})^{m} 
	\end{align}
	for all $\alpha \in \N_0^4$ with $|\alpha| \leq m$ and $v \in \Gdomvar{\tilde{m}}$ with $\image v \subseteq \mathcal{U}_1$.

  \item \label{ItemHigherOrderChainRuleInHh} Let $\theta \in \ml{m-1,1}{G}{\mathcal{U}}$ and $v \in \Hhdom{\tilde{m}-1}$ 
	  with $\image v \subseteq \mathcal{U}$. 
	  The composition $\theta(v)$ belongs to
	  $L^\infty(G)$ and all of its derivatives up to order $m$ are contained in $\Ltwohdom + L^\infty(G)$. We further have
	  \begin{align}
	  \label{EquationFormulaHigherOrderChainRuleInHh}
	   \partial^\alpha \theta(v) = \sum_{\substack{\beta, \gamma \in \N_0^3\\  \beta + \gamma = \alpha}} \, \sum_{1 \leq j \leq |\gamma|} \, \sum_{\substack{\gamma_1,\ldots,\gamma_j \in \N_0^3 \setminus\{0\} \\ \sum \gamma_i = \gamma}} 
	  \, &\sum_{l_1, \ldots, l_j = 1}^6 C_0(\alpha, \gamma_1, \ldots, \gamma_j) \nonumber\\
	  &\cdot (\partial_{y_{l_j}} \cdots \partial_{y_{l_1}} \partial_x^\beta \theta)( v)
	      \prod_{i=1}^j \partial^{\gamma_i} v_{l_i}
	  \end{align}
	  for all $v \in \Hhdom{\tilde{m}-1}$ and $\alpha \in \N_0^3$ with $0 < |\alpha|\leq m-1$, and the constants
	  \begin{align*}
	    C_0(\alpha, \gamma_1, \ldots, \gamma_j) = C((0,\alpha_1, \alpha_2,\alpha_3), (0,\gamma_{1,1}, \gamma_{1,2}, \gamma_{1,3}), \ldots, (0,\gamma_{j,1}, \gamma_{j,2}, \gamma_{j,3}))
	  \end{align*}
	  from assertion~\ref{ItemHigherOrderChainRuleInG}.
	  There further exists a constant $C_0(\theta, m,  \mathcal{U}_1)$ such that
	  \begin{align}
	   \label{EquationEstimateForHigherOrderChainRuleInH}
	   \|\partial^\alpha \theta(v)\|_{\Ltwohdom + L^\infty(G)} \leq C_0(\theta,m, \mathcal{U}_1) (1 + \Hhndom{\tilde{m}-1}{v})^{m-1} 
	  \end{align}
	  for all $\alpha \in \N_0^3$ with $|\alpha| \leq m-1$ and $v \in \Hhdom{\tilde{m}-1}$ with $\image v \subseteq \mathcal{U}_1$.
	  
  \item \label{ItemHigherOrderChainRuleMixed} Assume additionally that $m \geq 2$. 
	  Let $\theta \in \ml{m,1}{G}{\mathcal{U}}$, $t_0 \in \clJ$, and $r_0 > 0$. Then $\partial_t^j \theta(v)(t_0)$ belongs to 
	  $\Hhdom{m-j-1}$
	  and there is a constant $C(\theta,m, \mathcal{U}_1)$ such that
	  \begin{align*}
	   &\Hhndom{m-j-1}{\partial_t^j \theta(v)(t_0)} \\
	   &\leq C(\theta,m,\mathcal{U}_1) (1 + \max_{0 \leq l \leq j} \Hhndom{\tilde{m} - l -1}{\partial_t^l v(t_0)})^{m-2} \max_{0 \leq l \leq j} \Hhndom{\tilde{m} - l -1}{\partial_t^l v(t_0)}
	  \end{align*}
	    for all $j \in \{1,\ldots,m-1\}$ and $v \in \Gdomvar{\tilde{m}}$ with $\image v \subseteq \mathcal{U}$ and $\image v(t_0) \subseteq \mathcal{U}_1$.
 \end{enumerate}
\end{lem}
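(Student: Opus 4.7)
My plan is a straightforward induction on $|\alpha|$ (for parts \ref{ItemHigherOrderChainRuleInG} and \ref{ItemHigherOrderChainRuleInHh}) combined with three-dimensional Sobolev embedding on $G$, in the spirit of \cite[Lemma~7.1]{SpitzDissertation}. For part~\ref{ItemHigherOrderChainRuleInG}, the formula \eqref{EquationFormulaHigherOrderChainRuleInG} is obtained inductively: the base case $|\alpha|=1$ is the ordinary chain rule, in which the term $\partial_x\theta(v)$ appears only for spatial differentiations (encoding the constraint $\beta_0=0$), and the inductive step applies one further derivative via the product and chain rule and absorbs the new contributions into the same sum structure, which at the same time defines the constants $C(\alpha,\gamma_1,\ldots,\gamma_j)$. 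Part~\ref{ItemHigherOrderChainRuleInHh} is the purely spatial specialization, and the identification of $C_0$ with the restriction of $C$ to multiindices with vanishing time component is then immediate.

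For the estimate \eqref{EquationEstimateForHigherOrderChainRuleInG}, I would bound each summand of \eqref{EquationFormulaHigherOrderChainRuleInG} individually. Since $v$ takes values in the compact set $\mathcal{U}_1\Subset\mathcal{U}$, the composition $(\partial_{y_{l_j}}\cdots\partial_{y_{l_1}}\partial_x^{(\beta_1,\beta_2,\beta_3)}\theta)(v)$ is uniformly bounded on $J\times G$ by a constant depending only on $\theta$, $m$, and $\mathcal{U}_1$. For each factor $\partial^{\gamma_i} v_{l_i}$ the regularity $v\in\Gdomvar{\tilde m}$ yields $\partial^{\gamma_i}v\in L^\infty(J,\Hhdom{\tilde m-|\gamma_i|})$, and the Sobolev embedding $H^s(G)\hookrightarrow L^\infty(G)$ for $s\geq 2$ (which is exactly where $\tilde m\geq 3$ is used) places the factor in $L^\infty(J\times G)$ whenever $|\gamma_i|\leq\tilde m-2$, with norm controlled by $\Gdomnormwg{\tilde m}{v}$. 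Because $|\gamma_i|\geq 1$ for every $i$ and $\sum_i|\gamma_i|\leq m\leq\tilde m$, two indices with $|\gamma_i|\geq\tilde m-1$ would force the sum to exceed $\tilde m$; hence at most one factor fails the Sobolev threshold, and that exceptional factor is placed in $L^\infty(J,\Ltwohdom)$. Each summand therefore lies either in $L^\infty(J\times G)$ (all factors in $L^\infty$) or in $L^\infty(J,\Ltwohdom)$ (one factor in $L^2$, the others in $L^\infty$), and multiplying at most $j\leq|\gamma|\leq m$ such factors, each bounded by $\Gdomnormwg{\tilde m}{v}$, yields the claimed exponent $m$. Part~\ref{ItemHigherOrderChainRuleInHh} is proven by the same computation at fixed time, using $\tilde m-1\geq 2$ and losing one derivative and hence one power of the norm.

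For part~\ref{ItemHigherOrderChainRuleMixed}, I would apply the formula of~\ref{ItemHigherOrderChainRuleInG} with $\alpha=(j,0,0,0)$, evaluate at $t_0$, and then differentiate spatially up to order $m-j-1$, again using the product and chain rule. The resulting expression is a sum of terms $(\partial_y^\cdot \partial_x^\cdot\theta)(v(t_0))\prod_i\partial^{\gamma_i}v_{l_i}(t_0)$ with $\sum_i|\gamma_i|\leq m-1$, each $|\gamma_i|\geq 1$. Writing $\gamma_i=(\gamma_{i,0},\delta_i)$, the hypothesis $\partial_t^{\gamma_{i,0}}v(t_0)\in\Hhdom{\tilde m-\gamma_{i,0}-1}$ places $\partial^{\gamma_i}v(t_0)$ in $\Hhdom{\tilde m-1-|\gamma_i|}$, so the same Sobolev dichotomy as above ensures that at most one factor must be kept in $L^2$ while the others lie in $L^\infty(G)$. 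Pulling this top-order factor out linearly and bounding the remaining $\leq m-2$ factors in $L^\infty$ by $\max_l\Hhndom{\tilde m-l-1}{\partial_t^l v(t_0)}$, together with the uniformly bounded composition, yields the claimed polynomial bound.

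I expect the main bookkeeping obstacle to lie in part~\ref{ItemHigherOrderChainRuleMixed}: one has to check that the constraint $\beta_0=0$ from \eqref{EquationFormulaHigherOrderChainRuleInG} propagates correctly under further spatial differentiation (so that no stray $\partial_t$-derivatives of $\theta$ appear), and one has to verify that a single top-order factor can always be isolated in order to obtain the sharp exponent $m-2$ rather than a naive $m-1$. Everything else reduces to routine induction together with the product/chain rule and Sobolev embedding on $G$.
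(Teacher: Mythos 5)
Your derivation of the Fa\`a di Bruno formula by induction on $|\alpha|$, together with the observation that the constraint $\beta_0=0$ encodes the $t$-independence of $\theta$, matches the paper's intended approach (iterated chain/product rule plus Sobolev embedding, as in Lemma~7.1 of \cite{SpitzDissertation}). The estimate for part~\ref{ItemHigherOrderChainRuleInG} via the dichotomy ``at most one factor fails $H^s\hookrightarrow L^\infty$'' is also correct: since $\partial^{\gamma_i}v\in L^\infty(J,\Hhdom{\tilde m-|\gamma_i|})$, the failure threshold is $|\gamma_i|\geq\tilde m-1$, and two such indices would force $\sum_i|\gamma_i|\geq 2\tilde m-2>\tilde m\geq m$, a contradiction.

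The gap is in parts~\ref{ItemHigherOrderChainRuleInHh} and~\ref{ItemHigherOrderChainRuleMixed}, precisely because you lose one derivative there: in part~\ref{ItemHigherOrderChainRuleInHh} one has $\partial^{\gamma_i}v\in\Hhdom{\tilde m-1-|\gamma_i|}$, in part~\ref{ItemHigherOrderChainRuleMixed} one has $\partial^{\gamma_i}v(t_0)\in\Hhdom{\tilde m-1-|\gamma_i|}$, so the $L^\infty$ threshold shifts to $|\gamma_i|\leq\tilde m-3$, while the budget is $\sum_i|\gamma_i|\leq m-1\leq\tilde m-1$. Two factors can now fail simultaneously as soon as $2(\tilde m-2)\leq\tilde m-1$, i.e.\ $\tilde m\leq 3$. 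For $m=\tilde m=3$ this actually happens: in part~\ref{ItemHigherOrderChainRuleInHh} with $|\alpha|=2$ the term $(\partial_y^2\theta)(v)\,\partial_iv\,\partial_jv$ has both factors in $\Hhdom{1}$, which does not embed into $L^\infty(G)\subset\R^3$; in part~\ref{ItemHigherOrderChainRuleMixed} with $j=2$ the term $(\partial_y^2\theta)(v(t_0))\,(\partial_tv(t_0))^2$ again has both factors only in $\Hhdom{1}$. Your ``pull one factor out in $L^2$, put the rest in $L^\infty$'' scheme does not cover these terms, and $m=3$ is the principal case the paper cares about. The estimates are nonetheless true, but you need a genuine bilinear product estimate such as $\Hhdom{1}\cdot\Hhdom{1}\hookrightarrow\Ltwohdom$ (via $\Hhdom{1}\hookrightarrow L^4(G)$ and H\"older), i.e.\ the product estimates the paper invokes elsewhere through \cite[Lemma~2.1]{SpitzMaxwellLinear}, rather than the $L^\infty$ dichotomy alone. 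Once that refinement is inserted, the exponent bookkeeping ($m-1$ in part~\ref{ItemHigherOrderChainRuleInHh}, $m-2$ plus one linear factor in part~\ref{ItemHigherOrderChainRuleMixed}) goes through as you describe. The obstacle you flagged (propagation of $\beta_0=0$) is indeed only bookkeeping; the real subtlety is the product estimate, which you did not address.
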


For the contraction property of the fixed point operator in Section~\ref{SectionLocalExistence} and the 
derivation of the continuous dependance, we need similar estimates for the differences of 
such compositions. They are given by the next corollary. Its proof relies on Lemma~\ref{LemmaHigherOrderChainRule} and 
works in the same way as the one of~\cite[Corollary~7.2]{SpitzDissertation}.
\begin{cor}
 \label{CorollaryEstimateForDifferenceHigherOrder}
 Let $m \in \N$, $\tilde{m} = \max\{m,3\}$, and $\gamma \geq 0$. Take 
 $\theta \in \ml{m-1,1}{G}{\mathcal{U}}$, $R > 0$, and 
 pick a compact subset $\mathcal{U}_1$ of $\mathcal{U}$.
 \begin{enumerate}[leftmargin = 2em]
 \item \label{ItemDifferenceInLtwoh}  
  Let $v_1, v_2 \in \Gdomvar{\tilde{m}-1}$ with the bounds  
  $\Gdomnormwg{\tilde{m}-1}{v_1}, \Gdomnormwg{\tilde{m}-1}{v_2} \leq R$ and $\image v_1, \image v_2 \subseteq \mathcal{U}_1$. Then 
  there exists a constant $C = C(\theta,m,R,\mathcal{U}_1)$ such that
  \begin{align}
  \label{EquationEstimateForDifferenceWithoutLinftyTerm}
   \Ltwohndom{(\partial^\alpha \theta(v_1))(t) - (\partial^\alpha \theta(v_2))(s)} \leq C \sum_{\substack{\beta \in \N_0^4 \\ 0 \leq |\beta| \leq \tilde{m}-1}}\Ltwohndom{\partial^\beta v_1(t) - \partial^\beta v_2(s)} 
  \end{align}
  for almost all $t \in J$ and almost all $s \in J$ if $\alpha \in \N_0^4$ with $0 \leq |\alpha| \leq m-2$. In 
  the case $|\alpha| = m-1$ and $m > 1$ we have the estimate
  \begin{align}
  \label{EquationEstimateForDifferenceWithLinftyTerm}
  &\|(\partial^\alpha \theta(v_1))(t) - (\partial^\alpha \theta(v_2))(s)\|_{\Ltwohdom + L^\infty(G)} \leq C \sum_{\substack{\beta \in \N_0^4 \\ 0 \leq |\beta| \leq \tilde{m}-1}}\Ltwohndom{\partial^\beta v_1(t) - \partial^\beta v_2(s)} \nonumber\\
  &\qquad + C  \sum_{0 \leq j \leq m-1} \, \sum_{\substack{0 \leq \gamma \leq \alpha, \gamma_0 = 0 \\ |\gamma| = m-1-j}} \, \sum_{l_1, \ldots, l_{j} = 1}^6 \|(\partial_{y_{l_{j}}} \ldots \partial_{y_{l_1}} \partial_x^{(\gamma_1,\gamma_2,\gamma_3)}\theta)(v_1)(t) \\
  &\hspace{17em} - (\partial_{y_{l_{j}}} \ldots \partial_{y_{l_1}} \partial_x^{(\gamma_1,\gamma_2,\gamma_3)}\theta)(v_2)(s)\|_{L^\infty(G)} \nonumber 
  \end{align}
  for almost all $t \in J$ and almost all $s \in J$. If  $\theta$ additionally belongs to $\ml{m,1}{G}{\mathcal{U}}$, 
  the estimate~\eqref{EquationEstimateForDifferenceWithoutLinftyTerm} is true for almost all $t \in J$ and almost all $s \in J$ in 
  the case $|\alpha| = m-1$.
  Finally, if $\alpha_0 = 0$, it is enough to sum in~\eqref{EquationEstimateForDifferenceWithoutLinftyTerm} 
  and~\eqref{EquationEstimateForDifferenceWithLinftyTerm} over those multiindices $\beta$ with $\beta_0 = 0$.
  \item \label{ItemDifferenceInG} Let $v_1, v_2 \in \Gdomvar{\tilde{m}-1}$ with the bounds  
  $\Gdomnormwg{\tilde{m}-1}{v_1}, \Gdomnormwg{\tilde{m}-1}{v_2} \leq R$ and $\image v_1, \image v_2 \subseteq \mathcal{U}_1$,
  and $\theta \in \ml{m,1}{G}{\mathcal{U}}$.
 Then there exists a 
 constant  $C = C(\theta, m,R, \mathcal{U}_1)$ such that
 \begin{align*}
  \Gdomnorm{m-1}{\theta(v_1) - \theta(v_2)} \leq C \Gdomnorm{\tilde{m}-1}{v_1 - v_2}
 \end{align*}
 for all $\gamma \geq 0$.
 \end{enumerate}
\end{cor}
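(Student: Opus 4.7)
The plan is to apply Fa\'a di Bruno's formula~\eqref{EquationFormulaHigherOrderChainRuleInG} from Lemma~\ref{LemmaHigherOrderChainRule} to both $\partial^\alpha \theta(v_1)(t)$ and $\partial^\alpha \theta(v_2)(s)$ and subtract term by term. A generic summand of Fa\'a di Bruno has the shape $(\Phi\theta)(v_i)\prod_{k=1}^j \partial^{\gamma_k}(v_i)_{l_k}$, where $\Phi\theta$ denotes a mixed $(x,y)$-derivative of $\theta$ of total order $|\beta|+j\leq|\alpha|$. By adding and subtracting the hybrid term $(\Phi\theta)(v_2)(s)\prod_k\partial^{\gamma_k}(v_1)_{l_k}(t)$, I would split each summand into a ``$\theta$-piece'' $\bigl[(\Phi\theta)(v_1)(t) - (\Phi\theta)(v_2)(s)\bigr]\prod_k \partial^{\gamma_k}(v_1)_{l_k}(t)$ and a ``$v$-piece'' $(\Phi\theta)(v_2)(s)\bigl[\prod_k \partial^{\gamma_k}(v_1)_{l_k}(t) - \prod_k \partial^{\gamma_k}(v_2)_{l_k}(s)\bigr]$.

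For the $v$-piece I would telescope the product difference into $j$ summands, each containing exactly one difference factor $\partial^{\gamma_k}(v_1)_{l_k}(t) - \partial^{\gamma_k}(v_2)_{l_k}(s)$. Since $\sum_k|\gamma_k|=|\gamma|\leq|\alpha|\leq m-1\leq \tilde{m}-1$, at most one index satisfies $|\gamma_k|>\tilde{m}-2$; for that one I would keep the $L^2(G)$-norm of the difference, while by Sobolev's embedding $H^{\tilde{m}-1-|\gamma_k|}(G)\hookrightarrow L^\infty(G)$ (valid since $\tilde{m}\geq 3$) all remaining factors, together with the $L^\infty$-bounded prefactor $(\Phi\theta)(v_2)(s)$ coming from the definition of $\ml{m-1,1}{G}{\mathcal{U}}$, are controlled by constants depending only on $R$, $\mathcal{U}_1$, and $\theta$. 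This produces exactly the right-hand side $C\sum_{|\beta|\leq\tilde{m}-1}\|\partial^\beta(v_1(t)-v_2(s))\|_{L^2(G)}$ in~\eqref{EquationEstimateForDifferenceWithoutLinftyTerm}.

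For the $\theta$-piece, the fundamental theorem of calculus along the segment joining $v_2(s)(x)$ and $v_1(t)(x)$ yields $(\Phi\theta)(v_1)(t)-(\Phi\theta)(v_2)(s)=\int_0^1 D(\Phi\theta)(v_2(s)+r(v_1(t)-v_2(s)))\,dr\cdot(v_1(t)-v_2(s))$, which requires one additional $y$-derivative of $\theta$ to lie in $L^\infty(G\times\mathcal{U}_1)$; this is available precisely when $|\beta|+j+1\leq m-1$. In that range I would use $v_1(t)-v_2(s)\in L^\infty$ via Sobolev to absorb the piece into~\eqref{EquationEstimateForDifferenceWithoutLinftyTerm}. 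In the top case $|\beta|+j=m-1$, only $\theta\in\ml{m-1,1}{G}{\mathcal{U}}$ is available, the mean value step is not allowed, and one simply keeps $(\Phi\theta)(v_1)(t)-(\Phi\theta)(v_2)(s)$ as the extra $L^\infty(G)$ term in~\eqref{EquationEstimateForDifferenceWithLinftyTerm}. If instead $\theta\in\ml{m,1}{G}{\mathcal{U}}$, the mean value step applies also at the top and the additional term disappears, recovering~\eqref{EquationEstimateForDifferenceWithoutLinftyTerm}. The reduction to multiindices with $\beta_0=0$ when $\alpha_0=0$ is immediate since no time derivatives are then distributed by Fa\'a di Bruno. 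Part~\ref{ItemDifferenceInG} then follows by applying~\ref{ItemDifferenceInLtwoh} with $s=t$ and $\theta\in\ml{m,1}{G}{\mathcal{U}}$, multiplying by $e^{-\gamma t}$, taking $\sup_{t\in J}$, and summing over $|\alpha|\leq m-1$.

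The hard part will be the bookkeeping at top order $|\alpha|=m-1$: in each Fa\'a di Bruno summand I have to check that at most one $v$-factor requires the $L^2$-placement while all others, together with the $\theta$-prefactor, fit into $L^\infty$ in three dimensions, and I must precisely isolate the subset of terms where the absence of an $m$-th derivative of $\theta$ forces the additional $L^\infty$ sum in~\eqref{EquationEstimateForDifferenceWithLinftyTerm}.
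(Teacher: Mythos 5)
Your overall architecture is the right one -- Fa\'a di Bruno, adding/subtracting a hybrid term to split into a ``$\theta$-piece'' and a ``$v$-piece'', telescoping the product, and using a mean value argument for the $\theta$-piece, with the extra $L^\infty$ term arising exactly when $|\beta|+j = m-1$ and $\theta$ has only $m-1$ derivatives. However, there is a genuine gap in the Sobolev bookkeeping for the $v$-piece (and for the $v$-product attached to the $\theta$-piece). You claim that after selecting the single factor with $|\gamma_k| > \tilde m - 2$ for the $L^2$ slot, every remaining factor obeys $H^{\tilde m - 1 - |\gamma_{k'}|}(G) \hookrightarrow L^\infty(G)$, ``valid since $\tilde m \ge 3$''. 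But in three dimensions this embedding requires $\tilde m - 1 - |\gamma_{k'}| \ge 2$, i.e. $|\gamma_{k'}| \le \tilde m - 3$, not $|\gamma_{k'}| \le \tilde m - 2$. In the borderline configuration $m=\tilde m = 3$, $|\alpha| = m-1 = 2$, $j=2$, $|\gamma_1|=|\gamma_2|=1$ (which has $|\gamma_k| = 1 = \tilde m - 2$ for both indices, so your selection criterion singles out neither), both factors live only in $H^1(G)$, and $H^1(G) \not\hookrightarrow L^\infty(G)$. The ``one factor in $L^2$, the rest in $L^\infty$'' Hölder split therefore collapses precisely in the minimal case $m=3$, which is the one the paper cares most about.

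The repair is to replace the naive $L^2\cdot L^\infty$ split by a genuine product (Gagliardo--Nirenberg type) estimate, e.g.\ $\|fg\|_{L^2(G)} \le \|f\|_{L^4(G)}\|g\|_{L^4(G)} \le C \|f\|_{H^1(G)}\|g\|_{H^1(G)}$, and more generally the product estimates the paper refers to in \cite[Lemma~2.1]{SpitzMaxwellLinear}. With that tool, $\|\partial^{\gamma_1}(v_1(t)-v_2(s))\,\partial^{\gamma_2}v_i\|_{L^2(G)}$ is controlled by $\|\partial^{\gamma_1}(v_1(t)-v_2(s))\|_{H^1(G)}\,\|\partial^{\gamma_2}v_i\|_{H^1(G)} \le C R \sum_{|\beta|\le \tilde m - 1}\|\partial^\beta(v_1(t)-v_2(s))\|_{L^2(G)}$, which is exactly the right-hand side of~\eqref{EquationEstimateForDifferenceWithoutLinftyTerm}. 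The same fix is needed to put $\prod_k \partial^{\gamma_k} v_1(t)$ in $L^2(G)$ in your $\theta$-piece at top order. A minor further point: the fundamental-theorem-of-calculus step for the $\theta$-piece needs the segment from $v_2(s,x)$ to $v_1(t,x)$ to stay inside $\mathcal{U}$; since $\mathcal{U}_1$ is merely compact and $\mathcal{U}$ is not assumed convex, one should either work with a compact convex neighbourhood or combine a local Lipschitz bound with a trivial boundedness bound at large separations. Neither of these issues undermines the plan, but as written the Sobolev step is false and needs the product estimate from \cite[Lemma~2.1]{SpitzMaxwellLinear} (or an equivalent Gagliardo--Nirenberg argument) to close.
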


Assume that $u \in \Gdom{m}$ is a solution of~\eqref{EquationNonlinearIBVP} with inhomogeneity $f \in \Hadom{m}$ and 
initial value $u_0 \in \Hhdom{m}$. Lemma~\ref{LemmaHigherOrderChainRule} implies that we can take $p-1$ time-derivatives of the evolution equation 
in~\eqref{EquationNonlinearIBVP}, insert $t_0 \in \clJ$, and solve for $\partial_t^p u(t_0)$. This procedure yields a closed 
expression for $\partial_t^p u(t_0)$ in terms of the material laws and the data for all 
$p \in \{0, \ldots, m\}$, which will be utterly important in the following.
\begin{definition}
 \label{DefinitionNonlinearSmpOperators}
 Let $J \subseteq \R$ be an open interval, $m \in \N$, 
 and $\chi \in \mlpd{m,6}{G}{\mathcal{U}}$ and $\sigma\in \ml{m,6}{G}{\mathcal{U}}$. 
 We inductively define the operators
\begin{align*}
 S_{\chi, \sigma, G, m,p} \colon \clJ \times \Hadom{\max\{m,3\}} \times H^{\max\{m,2\}}(G,\mathcal{U})  \rightarrow \Hhdom{m-p}
\end{align*}
by $S_{\chi, \sigma,G, m, 0}(t_0, f,u_0) = u_0$ and
\begin{align}
\label{EquationDefinitionHigherOrderInitialValuesNonlinear}
 &S_{\chi, \sigma,G, m,p }(t_0, f,u_0) = \chi(u_0)^{-1} \Big(\partial_t^{p-1}f(t_0) - \sum_{j=1}^3 A_j^{\operatorname{co}} \partial_j S_{\chi, \sigma,G,m,p-1}(t_0, f,u_0) \nonumber\\
    &\hspace{8em} - \sum_{l=1}^{p-1} \binom{p-1}{l} M_1^l(t_0, f, u_0) S_{\chi, \sigma,G,m,p-l}(t_0,f,u_0)  \\
   &\hspace{8em} - \sum_{l=0}^{p-1} \binom{p-1}{l} M_2^l(t_0, f, u_0) S_{\chi, \sigma,G,m,p-1-l}(t_0, f,u_0)\Big), \nonumber\\
 &M_k^p = \sum_{1 \leq j \leq p} \sum_{\substack{\gamma_1,\ldots,\gamma_j \in \N_0^4 \setminus\{0\} \\ \sum \gamma_i = (p,0,0,0)}} \sum_{l_1, \ldots, l_j = 1}^6
	  C((p,0,0,0), \gamma_1, \ldots, \gamma_j)  \nonumber\\
	  &\hspace{10em} \cdot  (\partial_{y_{l_j}} \cdots \partial_{y_{l_1}} \theta_k)(u_0)
	      \prod_{i=1}^j S_{\chi,\sigma,G,m,|\gamma_i|}(t_0,f,u_0)_{l_i} \label{EquationDefinitionMkp}
\end{align}
for $1 \leq p \leq m$, $k \in \{1,2\}$, where $\theta_1 = \chi$, $\theta_2 = \sigma$,  $M_2^0 = \sigma(u_0)$, 
and $C$ is the constant from Lemma~\ref{LemmaHigherOrderChainRule}. By $H^{\max\{m,2\}}(G,\mathcal{U})$ we mean 
those functions $u_0 \in \Hhdom{\max\{m,2\}}$ with $\overline{\image u_0} \subseteq \mathcal{U}$.
\end{definition}
Lemma~\ref{LemmaHigherOrderChainRule} then implies that
\begin{equation}
	\label{EquationTimeDerivativesOfSolutionEqualSChiSigmaG}
	\partial_t^p u(t_0) = S_{\chi, \sigma, G, m,p}(t_0, f, u_0) \quad \text{for all } p \in \{0, \ldots, m\}
\end{equation}
if $u \in \Gdom{m}$ is a solution of~\eqref{EquationNonlinearIBVP} with data $f \in \Hadom{m}$ and $u_0 \in \Hhdom{m}$.
The next lemma shows that the operators $S_{\chi, \sigma, G, m, p}$ indeed map into $\Hhdom{m-p}$ and it provides corresponding 
estimates. The proof relies on Lemma~\ref{LemmaHigherOrderChainRule},
 Corollary~\ref{CorollaryEstimateForDifferenceHigherOrder} and the product estimates from~\cite[Lemma~2.1]{SpitzMaxwellLinear}. We refer to~\cite[Lemma~7.7]{SpitzDissertation} for details.
\begin{lem}
 \label{LemmaNonlinearHigherOrderInitialValues}
 Let $J \subseteq \R$ be an open interval, $t_0 \in \clJ$,  $m \in \N$, and $\tilde{m} = \max\{m,3\}$.
 Take $\chi \in \mlpd{m,6}{G}{\mathcal{U}}$ and $\sigma \in \ml{m,6}{G}{\mathcal{U}}$ 
 Choose data $f, \tilde{f} \in \Hadom{\tilde{m}}$ and $u_0, \tilde{u}_0 \in \Hhdom{\tilde{m}}$ such that 
  $\overline{\image u_0}$ and $\overline{\image \tilde{u}_0}$ are 
 contained in $\mathcal{U}$.
 Take $r > 0$ such that
 \begin{align*}
  \sum_{j = 0}^{\tilde{m}-1} \Hhndom{\tilde{m}-j-1}{\partial_t^j f(t_0)} + \Hhndom{\tilde{m}}{u_0} \leq r, \\
  \sum_{j = 0}^{\tilde{m}-1} \Hhndom{\tilde{m}-j-1}{\partial_t^j \tilde{f}(t_0)} + \Hhndom{\tilde{m}}{\tilde{u}_0} \leq r.
 \end{align*}
  Then the function $S_{\chi,\sigma,G,m,p}(t_0,f,u_0)$ belongs to $\Hhdom{m-p}$ and for 
  a constant $C_1 = C_1(\chi,\sigma,m,r, \mathcal{U}_1)$ 
  it satisfies
  \begin{align*}
    \Hhndom{m-p}{S_{\chi,\sigma,G,m,p}(t_0,f,u_0)} \leq C_1 \Big( \sum_{j = 0}^{m-1} \Hhndom{m-j-1}{\partial_t^j f(t_0)} + \Hhndom{m}{u_0} \Big)
  \end{align*}
  for all $p \in \{0, \ldots, m\}$, where $\mathcal{U}_1$ is a compact subset of $\mathcal{U}$ with 
  $\image u_0 \subseteq \mathcal{U}_1$.
  Moreover, there is a constant 
  $C_2 = C_2(\chi,\sigma, m, r,\mathcal{U}_2)$ such that 
  \begin{align*}
   &\Hhndom{m-p}{S_{\chi,\sigma,G,m,p}(t_0,f,u_0) - S_{\chi,\sigma,G,m,p}(t_0, \tilde{f}, \tilde{u}_0)} \\
   &\leq C_2 \Big(  \sum_{j = 0}^{m-1} \Hhndom{m-j-1}{\partial_t^j f(t_0) - \partial_t^j \tilde{f}(t_0)} + \Hhndom{m}{u_0 - \tilde{u}_0} \Big)
  \end{align*}
  for all $p \in \{0,\ldots,m\}$, where $\mathcal{U}_2$ is a compact subset of $\mathcal{U}$ with 
  $\image u_0, \image \tilde{u}_0 \subseteq \mathcal{U}_2$.
\end{lem}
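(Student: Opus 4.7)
The plan is to prove both bounds simultaneously by induction on $p \in \{0, \ldots, m\}$. The base case $p = 0$ is immediate since $S_{\chi,\sigma,G,m,0}(t_0, f, u_0) = u_0$ and the analogous difference equals $u_0 - \tilde u_0$; both inequalities reduce at once to the bound by $\Hhndom{m}{u_0}$, respectively $\Hhndom{m}{u_0 - \tilde u_0}$.

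For the inductive step, fix $1 \leq p \leq m$ and assume the assertion for all $q \leq p - 1$. I would estimate each summand in~\eqref{EquationDefinitionHigherOrderInitialValuesNonlinear} in $\Hhdom{m-p}$ separately. The inhomogeneity $\partial_t^{p-1} f(t_0)$ already lies in $\Hhdom{m-p}$ with the required bound by the assumption $f \in \Hadom{\tilde m}$ and the hypothesis on $r$. The term $\sum_{j} A_j^{\operatorname{co}} \partial_j S_{\chi,\sigma,G,m,p-1}(t_0, f, u_0)$ is in $\Hhdom{m-p}$ by the inductive hypothesis since $A_j^{\operatorname{co}}$ is constant and differentiating costs exactly one spatial order. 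For the terms $M_k^l \cdot S_{\chi,\sigma,G,m,p-l}$ and $M_k^l \cdot S_{\chi,\sigma,G,m,p-1-l}$, I would expand $M_k^l$ via~\eqref{EquationDefinitionMkp}: each composition $(\partial_{y_{l_j}} \cdots \partial_{y_{l_1}} \theta_k)(u_0)$ is bounded in the appropriate Sobolev/$L^\infty$ norm by Lemma~\ref{LemmaHigherOrderChainRule}\ref{ItemHigherOrderChainRuleInHh} in terms of $\Hhndom{\tilde m - 1}{u_0}$, the factors $S_{\chi,\sigma,G,m,|\gamma_i|}(t_0,f,u_0)$ are controlled in $\Hhdom{m - |\gamma_i|}$ by the inductive hypothesis, and the combinatorial identity $\sum_i |\gamma_i| + (p-l) = p$ matches the loss of regularity allowed by the product estimates from~\cite[Lemma~2.1]{SpitzMaxwellLinear}. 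Finally, multiplication by $\chi(u_0)^{-1}$ preserves the resulting $\Hhdom{m-p}$-bound: since $\chi^T = \chi \geq \eta$ on $G \times \mathcal U$, Cramer's rule shows $\xi \mapsto \chi(x,\xi)^{-1}$ is of class $\ml{m,6}{G}{\mathcal U_1}$ for every compact $\mathcal U_1 \Subset \mathcal U$, so Lemma~\ref{LemmaHigherOrderChainRule}\ref{ItemHigherOrderChainRuleInHh} applies once more, and a last product estimate yields the first claim.

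The difference estimate runs by the same induction on $p$. Expanding the difference $S_{\chi,\sigma,G,m,p}(t_0,f,u_0) - S_{\chi,\sigma,G,m,p}(t_0,\tilde f,\tilde u_0)$ telescopically along~\eqref{EquationDefinitionHigherOrderInitialValuesNonlinear}, each pair of corresponding terms splits into a factor difference handled by Corollary~\ref{CorollaryEstimateForDifferenceHigherOrder} (applied in its time-independent form to the composition differences of the type $\theta(u_0) - \theta(\tilde u_0)$) or by the inductive hypothesis (for differences of lower-order $S$-values), while the remaining factor in each product is bounded uniformly using the already established first estimate. The difference of $\chi(u_0)^{-1}$ and $\chi(\tilde u_0)^{-1}$ is treated through the algebraic identity $\chi(u_0)^{-1} - \chi(\tilde u_0)^{-1} = \chi(u_0)^{-1}(\chi(\tilde u_0) - \chi(u_0))\chi(\tilde u_0)^{-1}$, which reduces it to a difference of compositions of $\chi$ handled by Corollary~\ref{CorollaryEstimateForDifferenceHigherOrder}.

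The main obstacle I anticipate is purely bookkeeping: one has to check that the multilinear structure of $M_k^l$ and the nested Fa\`a di Bruno summations combine with the Sobolev product estimates so that every single product lands in exactly $\Hhdom{m-p}$, not a weaker space. The hypothesis $\tilde m = \max\{m,3\}$ provides the crucial embedding $\Hhdom{\tilde m - 1} \hookrightarrow L^\infty(G)$ that makes these Moser-type estimates close; without it, the multilinear composition bounds would degrade as the indices $|\gamma_i|$ shift.
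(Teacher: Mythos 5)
Your proposal is correct and matches the approach the paper itself sketches: an induction on $p$ driven by the recursive definition~\eqref{EquationDefinitionHigherOrderInitialValuesNonlinear}, with Lemma~\ref{LemmaHigherOrderChainRule}\ref{ItemHigherOrderChainRuleInHh} controlling the compositions $(\partial_y^{\cdots}\theta_k)(u_0)$ and $\chi(u_0)^{-1}$, the product estimates from \cite[Lemma~2.1]{SpitzMaxwellLinear} landing each multilinear term in $\Hhdom{m-p}$, and Corollary~\ref{CorollaryEstimateForDifferenceHigherOrder} (together with the resolvent identity for $\chi^{-1}$) handling the Lipschitz part. These are exactly the tools the paper cites before delegating the bookkeeping to \cite[Lemma~7.7]{SpitzDissertation}, so there is no substantive deviation.
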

In~\cite{SpitzMaxwellLinear} a solution in $\Gdom{m}$ of the linear problem~\eqref{EquationIBVPIntroduction} 
was constructed for boundary data from the space
\begin{align*}
 &\Edom{m} = \bigcap_{j = 0}^m H^j(J, H^{m + \frac{1}{2} - j}(\partial G)), \\
 &\Enormwgdom{m}{g} = \max_{0 \leq j \leq m} \|\partial_t^j g\|_{L^2(J, H^{m + 1/2 - j}(\partial G))}.
\end{align*}
We thus also
take boundary data $g \in \Edom{m}$ for the nonlinear problem~\eqref{EquationNonlinearIBVP}. But then we can differentiate the boundary condition 
in~\eqref{EquationNonlinearIBVP} up to $m-1$-times in time and still evaluate in $t_0$ if $u$ belongs to $\Gdom{m}$. In 
combination with~\eqref{EquationTimeDerivativesOfSolutionEqualSChiSigmaG} we deduce the identities
\begin{equation}
\label{EquationNonlinearCompatibilityConditions}
	B S_{\chi, \sigma, G, m,p}(t_0, f, u_0) = \partial_t^p g(t_0) \quad \text{on } \partial G
\end{equation}
for all $p \in \{0, \ldots, m-1\}$, which are thus necessary conditions for the existence of a 
$\Gdom{m}$-solution of~\eqref{EquationNonlinearIBVP}. 
We say that the data tuple $(\chi, \sigma, t_0, B, f, g, u_0)$ fulfills the \emph{compatibility conditions}
of order $m$ if $\overline{\image u_0} \subseteq \mathcal{U}$ and~\eqref{EquationNonlinearCompatibilityConditions} holds for 
all $p \in \{0, \ldots, m-1\}$.
In the next lemma we relate the operators $S_{\chi,\sigma,G,m,p}$ with their linear counterparts from~\cite{SpitzMaxwellLinear}. Therefore, we have to recall some notation. In~\cite{SpitzMaxwellLinear} we introduced 
the function spaces
\begin{align*}
 &\Fdomk{m}{k} = \{A \in W^{1,\infty}(J \times G)^{k \times k} \colon  \partial^\alpha A \in L^\infty(J , \Ltwohdom)^{k \times k} 
 \text{ for all } \alpha \in \N_0^4 \\
 &\hspace{18em} \text{with } 1 \leq |\alpha| \leq m\}, \\
 &\Fdomnorm{m}{A} = \max\{\|A\|_{W^{1,\infty}(J \times G)}, \max_{1 \leq |\alpha| \leq m} \|\partial^\alpha A\|_{L^\infty(J, \Ltwohdom)}\},\\
 &\Fvarwkdom{m}{k} = \{A \in L^\infty(G)^{k \times k} \colon \partial^\alpha A \in \Ltwodomh^{k \times k} \text{ for all } \alpha \in \N_0^3  \text{ with }  1\leq |\alpha| \leq m\}, \\
  &\Fvarnormdom{m}{A} = \max\{\|A\|_{L^\infty(G)}, \max_{1 \leq |\alpha| \leq m}\|\partial^\alpha A\|_{\Ltwohdom}\}
\end{align*}
for $k \in \N$. 
Moreover, by $\Fdompdk{m}{\eta}{k}$ we mean those functions $A$ from $\Fdomk{m}{k}$ with 
$A(t,x) = A(t,x)^T \geq \eta$ for all $(t,x) \in J \times G$ and by $\Fdomupdwl{m}{c}{k}$ those 
which have a limit as $|(t,x)| \rightarrow \infty$.
If it is clear from the context which parameter $k$ we consider, we suppress it in the notation.
\begin{rem}
	\label{RemarkChiuInFm}
	As noted in Remark~1.2 in~\cite{SpitzMaxwellLinear} the linear theory allows for coefficients in $W^{1,\infty}(J \times G)$ 
	whose derivatives up to order $m$ are contained in $L^\infty(J, \Ltwohdom) + L^\infty(J \times G)$. In view of 
	Lemma~\ref{LemmaHigherOrderChainRule}, we can thus apply the linear theory with coefficients $\chi(\hat{u})$ and 
	$\sigma(\hat{u})$ and $\hat{u} \in \Gdomvar{\tilde{m}}$.
	However, the part of the derivatives in $L^\infty(J \times G)$ is easier to treat so that we concentrated on coefficients 
	from $\Fdom{m}$ in~\cite{SpitzMaxwellLinear}. The same is true for the nonlinear problem. In the proofs we will therefore 
	assume without loss of generality that $\chi$ and $\sigma$ from $\ml{m,6}{G}{\mathcal{U}}$ have 
	decaying space derivatives as $|x| \rightarrow \infty$.  More precisely, for 
	all multiindices $\alpha \in \N_0^9$ with $\alpha_4 = \ldots = \alpha_9 = 0$ and  $|\alpha| \leq m$, $R > 0$, $\mathcal{U}_1 \Subset \mathcal{U}$,
	and $v \in L^\infty(J, \Ltwohdom)$ with $\image v \subseteq \mathcal{U}_1$ and 
	$\|v\|_{L^\infty(J, \Ltwohdom)} \leq R$ we have
	\begin{align}
	\label{EquationPropertyForL2}
		&(\partial^\alpha \chi)(v), (\partial^\alpha \sigma)(v) \in L^\infty(J, \Ltwohdom), \nonumber\\
		&\|(\partial^\alpha \chi)(v)\|_{L^\infty(J, \Ltwohdom)} + \|(\partial^\alpha \sigma)(v)\|_{L^\infty(J, \Ltwohdom)} \leq C,
	\end{align}
	where $C = C(\chi,\sigma,m,R,\mathcal{U}_1)$.
	With this assumption we obtain from Lemma~\ref{LemmaHigherOrderChainRule} that $\chi(\hat{u})$ and $\sigma(\hat{u})$ 
	belong to $\Fdom{m}$.
	
	Finally, we point out that if $G$ is bounded, the above considerations are unnecessary since $\Ltwohdom + L^\infty(G) = \Ltwohdom$ in this case.
\end{rem}
If one has a $\Gdom{m}$-solution $u$ of the linear problem~\eqref{EquationIBVPIntroduction} with coefficients 
$A_0 \in \Fdompdw{\tilde{m}}{\eta}$ with $\eta > 0$ and $\tilde{m} = \max\{m,3\}$, $A_1, A_2, A_3 \in \Fdom{\tilde{m}}$ independent of time,
and $D \in \Fdom{\tilde{m}}$ (note that we allow more general spatial coefficients here), the same 
reasoning as above first gives a closed expression for $\partial_t^p u(t_0)$ in terms of the coefficients and the data, 
which we denote by $S_{G,m,p}(t_0, A_0, \ldots, A_3, D, f, u_0)$, 
and compatibility conditions on the boundary. We refer to (2.2) and~(2.4) in~\cite{SpitzMaxwellLinear} for the precise notion. 
We then say that the tuple $(t_0, A_0, \ldots, A_3, D, B, f, g, u_0)$ fulfills the linear compatibility 
conditions of order $m$ if the equations
\begin{equation}
 \label{EquationCompatibilityConditionPrecised}
 B S_{G,m,p}(t_0, A_0, \ldots, A_3, D, f, u_0) = \partial_t^p g(t_0) \quad \text{on } \partial G
\end{equation}
are satisfied for all $p \in \{0, \ldots, m-1\}$.
Since we want to apply the linear theory with coefficients $\chi(\hat{u})$ and $\sigma(\hat{u})$, we have to know in which way 
the compatibility conditions~\eqref{EquationNonlinearCompatibilityConditions} for the nonlinear problem imply the compatibility conditions~\eqref{EquationCompatibilityConditionPrecised} for the
resulting linear problem.
\begin{lem}
 \label{LemmaCorrespondenceLinearNonlinearInZero}
 Let $J \subseteq \R$ be an open interval, $t_0 \in \clJ$, and $m \in \N$ with $m \geq 3$.
 Take  $\chi \in \mlpd{m,6}{G}{\mathcal{U}}$ and $\sigma \in \ml{m,6}{G}{\mathcal{U}}$.
 Choose data $f \in \Hadom{m}$ and $u_0 \in \Hhdom{m}$ such that $\overline{\image u_0}$ is 
 contained in $\mathcal{U}$. Let $r  > 0$.
 Assume that $f$ and $u_0$ satisfy
 \begin{align*}
  \begin{aligned}
    &\Hhndom{m}{u_0} \leq r, \quad &&\max_{0 \leq j \leq m-1} \Hhndom{m-j-1}{\partial_t^j f(t_0)} \leq r, \\
    &\Gdomnormwg{m-1}{f} \leq r, &&\Handom{m}{f} \leq r.
  \end{aligned}
 \end{align*}
 \begin{enumerate}
  \item \label{ItemConnectionLinearNonlinerOperatorS}
	Let $\hat{u} \in \Gdomvar{m}$ with $\partial_t^p \hat{u}(t_0) = S_{\chi,\sigma,G,m,p}(t_0,f,u_0)$ 
	for $0 \leq p \leq m - 1$. Then $\hat{u}$ fulfills the equations
	\begin{align}
	    \label{EquationConnectionLinearAndNonlinearCompatibilityConditions}
	    S_{G,m,p}(t_0,\chi(\hat{u}),A_1^{\operatorname{co}}, A_2^{\operatorname{co}},A_3^{\operatorname{co}}, \sigma(\hat{u}), f, u_0) = S_{\chi,\sigma,G, m, p}(t_0, f, u_0)
	\end{align}
	for all $p \in \{0,\ldots,m\}$.
  \item \label{ItemNonEmptyFixedPointSpace}
	There is a constant $C(\chi,\sigma,m,r,\mathcal{U}_1) > 0$ 
	and a function $u$ in $\Gdom{m}$ realizing the initial conditions 
	\begin{align*}
	  \partial_t^p u(t_0) = S_{\chi, \sigma, G, m, p}(t_0,f,u_0)
	\end{align*}
	for all $p \in \{0,\ldots,m\}$ and it is bounded by
	\begin{align*}
	    \Gdomnormwg{m}{u} \leq C(\chi,\sigma,m,r,\mathcal{U}_1) \Big(\sum_{j = 0}^{m-1} \Hhndom{m-j-1}{\partial_t ^j f(t_0)} + \Hhndom{m}{u_0} \Big).
	\end{align*}   
	Here $\mathcal{U}_1$ denotes a compact subset of $\mathcal{U}$ with $\image u_0 \subseteq \mathcal{U}_1$.
 \end{enumerate}
\end{lem}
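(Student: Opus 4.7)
I would establish part~\ref{ItemConnectionLinearNonlinerOperatorS} by induction on $p$ and then derive part~\ref{ItemNonEmptyFixedPointSpace} from Lemma~\ref{LemmaNonlinearHigherOrderInitialValues} combined with a Borel-type lifting. For part~\ref{ItemConnectionLinearNonlinerOperatorS}, the case $p=0$ is immediate since both operators return $u_0$. For the inductive step I compare the linear recursion from~\cite{SpitzMaxwellLinear} applied with coefficients $A_0 = \chi(\hat{u})$ and $D = \sigma(\hat{u})$ against the nonlinear recursion~\eqref{EquationDefinitionHigherOrderInitialValuesNonlinear}. The linear formula for $S_{G,m,p}$ expresses the outcome through $\partial_t^l \chi(\hat{u})(t_0)$, $\partial_t^l \sigma(\hat{u})(t_0)$ for $0 \leq l \leq p-1$ and through the lower-order $S_{G,m,q}$. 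Applying the Fa\`a di Bruno formula~\eqref{EquationFormulaHigherOrderChainRuleInG} of Lemma~\ref{LemmaHigherOrderChainRule} and substituting $\partial_t^l \hat{u}(t_0) = S_{\chi,\sigma,G,m,l}(t_0,f,u_0)$ from the hypothesis (available for $l \leq m-1$, hence in particular for $l \leq p-1$), these time derivatives are rewritten exactly as the operators $M_1^l, M_2^l$ of~\eqref{EquationDefinitionMkp}, with matching combinatorial constants $C(\alpha,\gamma_1,\ldots,\gamma_j)$. Combined with the inductive hypothesis applied to $S_{G,m,q}$ for $q \leq p-1$, the two recursions coincide, closing the induction.

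For part~\ref{ItemNonEmptyFixedPointSpace} I set $u_p := S_{\chi,\sigma,G,m,p}(t_0,f,u_0)$; Lemma~\ref{LemmaNonlinearHigherOrderInitialValues} gives $u_p \in \Hhdom{m-p}$ together with the bound claimed in the conclusion. It then remains to construct a representative $u \in \Gdom{m}$ realising $\partial_t^p u(t_0) = u_p$ for $0 \leq p \leq m$ and obeying the same bound. This is a Borel-type lifting from the descending Sobolev scale $u_p \in \Hhdom{m-p}$ into $\bigcap_{j=0}^m C^j(\clJ,\Hhdom{m-j})$: one extends each $u_p$ across $\partial G$ by a bounded linear extension operator to $\tilde{u}_p \in H^{m-p}(\R^3)$, sets
\[
u(t) := \sum_{p=0}^m \frac{(t-t_0)^p}{p!}\, J_{\delta_p(t)} \tilde{u}_p,
\]
where $J_\delta$ is a spatial mollifier and the scalar functions $\delta_p$ vanish at $t_0$ to an order high enough neither to perturb the prescribed initial derivatives nor to degrade the space-time regularity (each summand then lies in $\Hhdom{m}$ for $t \neq t_0$), and finally restricts back to $G$. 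The asserted norm bound follows by inspecting each summand and using boundedness of the extension and of $J_\delta$. A detailed realisation of this construction is contained in~\cite{SpitzDissertation}.

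The main obstacle is the combinatorial identification in part~\ref{ItemConnectionLinearNonlinerOperatorS}: one must verify, term by term and with identical constants, that the Fa\`a di Bruno expansion of $\partial_t^l \chi(\hat{u})(t_0)$ reproduces the operator $M_1^l$ and that $\partial_t^l \sigma(\hat{u})(t_0)$ reproduces $M_2^l$. This is purely algebraic, but the multiindex bookkeeping is delicate; it succeeds precisely because the nonlinear operators $S_{\chi,\sigma,G,m,p}$ were designed so that this identification with the Fa\`a di Bruno output holds by construction.
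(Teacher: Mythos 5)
Your proposal is correct and follows the paper's own route: part~\ref{ItemConnectionLinearNonlinerOperatorS} is established by induction on $p$, matching the linear recursion from~\cite{SpitzMaxwellLinear} against the nonlinear one in~\eqref{EquationDefinitionHigherOrderInitialValuesNonlinear} via the Fa\`a di Bruno formula of Lemma~\ref{LemmaHigherOrderChainRule}, and part~\ref{ItemNonEmptyFixedPointSpace} combines the bounds from Lemma~\ref{LemmaNonlinearHigherOrderInitialValues} with a lifting of the prescribed initial time derivatives into $\Gdom{m}$. The paper simply invokes an extension theorem from the dissertation for the lifting; your mollified--Taylor-polynomial sketch is a plausible concrete realisation of that same result (and you correctly defer the delicate verification of the time regularity to the dissertation), so the two proofs are essentially identical in structure.
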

\begin{proof}
  \ref{ItemConnectionLinearNonlinerOperatorS} Assertion~\ref{ItemConnectionLinearNonlinerOperatorS} follows by 
 induction from the definition of the 
 operators $S_{G,m,p}$ in~\cite[(2.2)]{SpitzMaxwellLinear}, Lemma~\ref{LemmaHigherOrderChainRule}, and the 
 definition of $S_{\chi,\sigma,G,m,p}$ in~\eqref{EquationDefinitionHigherOrderInitialValuesNonlinear}.
 
 \ref{ItemNonEmptyFixedPointSpace}
 Since $S_{\chi, \sigma, G, m, p}(t_0,f,u_0)$ belongs to $\Hhdom{m-p}$ for all $p \in \{0, \ldots, m\}$, an extension theorem (see e.g. Lemma~2.34 in~\cite{SpitzDissertation}) yields the existence of a function $u$ with 
 $ \partial_t^p u(t_0) = S_{\chi, \sigma, G, m, p}(t_0,f,u_0)$ for all $p \in \{0, \ldots, m\}$ and
\begin{equation*}
 \Gdomnormwg{m}{u} \leq C \sum_{p = 0}^m \Hhndom{m-p}{S_{\chi, \sigma, G, m, p}(t_0,f,u_0)}.
\end{equation*} 
 Lemma~\ref{LemmaNonlinearHigherOrderInitialValues} then implies the assertion.
\end{proof}
 
 \section{Local existence}
 \label{SectionLocalExistence}
 In this section we prove existence and uniqueness of a solution of~\eqref{EquationNonlinearIBVP} by 
 a fixed point argument based on the a priori estimates and the regularity theory from~\cite{SpitzMaxwellLinear} 
 for the corresponding linear problem. By a solution 
of the nonlinear problem~\eqref{EquationNonlinearIBVP} we mean a function $u$ which belongs to 
$\bigcap_{j = 0}^m C^j(I, \Hhdom{m-j})$ 
with $\overline{\image u(t)} \subseteq \mathcal{U}$ for all $t \in I$ and which satisfies~\eqref{EquationNonlinearIBVP}. 
Here $I$ is an interval with $t_0 \in I$.
Since the main result from~\cite{SpitzMaxwellLinear} is 
 omnipresent in the following, we recall it in Theorem~\ref{TheoremExistenceAndUniquenessOnDomain} below.
 Prior to this, we want to stress that in~\cite{SpitzMaxwellLinear} the initial boundary value 
 problem~\eqref{EquationIBVPIntroduction} on general domains $G$ was reduced to a half-space problem 
 via local charts. The localization procedure and a subsequent transform lead to the study of
  \begin{equation}
  \label{IBVP}
\left\{\begin{aligned}
   A_0 \partial_t u + \sum_{j=1}^3 A_j \partial_j u + D u  &= f, \quad &&x \in \R^3_+, \quad &t \in J; \\
   B u &= g, \quad &&x \in \partial \R^3_+, &t \in J; \\
   u(t_0) &= u_0, \quad &&x \in \R^3_+;
\end{aligned}\right.
\end{equation}
with coefficients $A_0 \! \in \! \Fupdwlk{\tilde{m}}{\operatorname{c}}{\eta}{6}$, $D \! \in \! \Fuwlk{\tilde{m}}{c}{6}$, 
$A_3 = A_3^{\operatorname{co}}$, and $A_1, A_2 \!\in \! \Fcoeff{\tilde{m}}{cp}$, where 
\begin{align*}
 \Fcoeff{m}{cp} := \{&A \in \Fuwlk{m}{cp}{6} \colon \exists \mu_1, \mu_2, \mu_3 \in \Fuwlk{m}{cp}{1} 
 \text{ independent of time, } \\
 &\text{ constant outside of a compact set such that } A = \sum_{j = 1}^3 A_j^{\operatorname{co}} \mu_j\}.
\end{align*}
Moreover, in the boundary condition we have $B = B^{\operatorname{co}}$, where $B^{\operatorname{co}}$ 
is a constant matrix in $\R^{2 \times 6}$ with rank $2$. There further exists another constant matrix 
$C^{\operatorname{co}} \in \R^{2 \times 6}$ with rank $2$ such that
\begin{equation}
\label{EquationDecompositionOfA3co}
 A_3^{\operatorname{co}} = \frac{1}{2} \Big(C^{\operatorname{co} T} B^{\operatorname{co}} + B^{\operatorname{co} T} C^{\operatorname{co}} \Big).
\end{equation}
We refer to~\cite[Section~2]{SpitzMaxwellLinear} and~\cite[Chapter~5]{SpitzDissertation} for the details.

The main result from~\cite{SpitzMaxwellLinear} shows that the linear initial boundary value problem~\eqref{EquationIBVPIntroduction} 
respectively~\eqref{IBVP} has a unique solution in $\Gdom{m}$ if the coefficients and data are accordingly 
regular and compatible. Moreover, the $\Gdom{m}$-norm of the solution can be estimated by the 
corresponding norms of the data.
 \begin{theorem}
  \label{TheoremExistenceAndUniquenessOnDomain}
  Let $\eta > 0$, $m \in \N_0$, and $\tilde{m} = \max\{m,3\}$. Fix radii $r \geq  r_0 >0$. Take a domain $G$ 
  with compact $C^{\tilde{m} + 2}$-boundary or $G = \R^3_+$.
  Choose times $t_0 \in \R$, $T' > 0$ and $T \in (0, T')$ and set 
  $J = (t_0, t_0 + T)$. Take coefficients $A_0 \in \Fdomupdwl{\tilde{m}}{c}{\eta}$,   
  $D \in \Fdomuwl{\tilde{m}}{c}$, and $A_3 = A_3^{\operatorname{co}}$. If $G = \R^3_+$, pick 
  $A_1, A_2 \in \Fcoeff{\tilde{m}}{cp}$. Otherwise, let $A_1 = A_1^{\operatorname{co}}$ and 
  $A_2 = A_2^{\operatorname{co}}$. Assume the bounds
  \begin{align*}
    &\Fnormdom{\tilde{m}}{A_i} \leq r, \quad \Fnormdom{\tilde{m}}{D} \leq r, \\
    &\max \{\Fvarnormdom{\tilde{m}-1}{A_i(t_0)},\max_{1 \leq j \leq \tilde{m}-1} \Hhndom{\tilde{m}-j-1}{\partial_t^j A_0(t_0)}\} \leq r_0, \\
    &\max \{\Fvarnormdom{\tilde{m}-1}{D(t_0)},\max_{1 \leq j \leq \tilde{m}-1} \Hhndom{\tilde{m}-j-1}{\partial_t^j D(t_0)}\} \leq r_0,
  \end{align*}
  for all $i \in \{0, 1, 2\}$. Set $B = B^{\operatorname{co}}$ if $G = \R^3_+$ and define $B$ 
  as in~\eqref{EquationDefinitionB} else. Choose data $f \in \Hadom{m}$, $g \in \Edom{m}$, and $u_0 \in \Hhdom{m}$ 
  such that the tuple $(t_0, A_0, \ldots, A_3, D, B, f, g,u_0)$
  fulfills the linear compatibility conditions~\eqref{EquationCompatibilityConditionPrecised} of 
  order $m$. 
   
  Then the linear initial boundary value problem~\eqref{IBVP} respectively~\eqref{EquationIBVPIntroduction}
  has a unique solution $u$ in 
  $\Gdom{m}$. Moreover,  there is a number
 $\gamma_m = \gamma_m(\eta, r, T') \geq 1$ such that
 \begin{align}
  &\Gdomnorm{m}{u}^2  \leq (C_{m,0} + T C_m) e^{m C_1 T} \Big(  \sum_{j = 0}^{m-1} \Hhndom{m-1-j}{\partial_t^j f(t_0)}^2 + \Enormdom{m}{g}^2  \nonumber\\
      &\hspace{18em} + \Hhndom{m}{u_0}^2 \Big) + \frac{C_m}{\gamma}  \Hangammadom{m}{f}^2    \nonumber
 \end{align}
 for all $\gamma \geq \gamma_m$, where $C_i = C_i(\eta,  r,T') \geq 1$ and $C_{i,0} = C_{i,0}(\eta,r_0) \geq 1$ 
 for $i \in \{1,m\}$.
 \end{theorem}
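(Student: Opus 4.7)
The plan is to reduce to the half-space case via the localization scheme already set up in Section~2 of \cite{SpitzMaxwellLinear} (the chart construction needs the $C^{\tilde m+2}$-regularity of $\partial G$ and produces the structured half-space system \eqref{IBVP} with $A_1,A_2 \in \Fcoeff{\tilde m}{cp}$, $A_3 = A_3^{\operatorname{co}}$, $B = B^{\operatorname{co}}$, and the decomposition \eqref{EquationDecompositionOfA3co}). Once on the half-space, the heart of the matter is a weighted a~priori estimate in $\Gdom{m}$; the existence then follows from a standard regularization scheme (mollify $A_0,A_1,A_2,D$ in space and time, solve the regularized strictly hyperbolic system by Kato-type semigroup theory or a vanishing-viscosity approximation, and pass to the limit using the uniform bound), while uniqueness is immediate from the case $m=0$ of the estimate applied to the difference of two solutions.

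For the $L^2$ (i.e.\ $m=0$) estimate on the half-space I would multiply \eqref{IBVP} by $e^{-2\gamma t} u$, integrate over $(t_0,t)\times\R^3_+$, and symmetrize: the positivity $A_0\geq\eta$ controls the bulk term, the time derivative produces $\gamma\|e_{-\gamma}u\|^2$ after using $\partial_t A_0\in L^\infty$, the volume contribution of $\sum A_j\partial_j$ is absorbed using $A_j=A_j^T$ and $\partial_j A_j\in L^\infty$, while the boundary contribution is exactly $\tfrac12\int e^{-2\gamma t}\langle A_3^{\operatorname{co}} u,u\rangle\,d\sigma$, which via \eqref{EquationDecompositionOfA3co} equals $\int e^{-2\gamma t}\langle C^{\operatorname{co}} u, B^{\operatorname{co}} u\rangle\,d\sigma = \int e^{-2\gamma t}\langle C^{\operatorname{co}} u, g\rangle\,d\sigma$ and is thus controllable by $\Enorm{0}{g}$ together with a small fraction of the interior norm. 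Choosing $\gamma\geq\gamma_0(\eta,r)$ absorbs all lower-order terms and yields the estimate with constants of the claimed structure; the separation $C_{m,0}+T\,C_m$ comes from tracking the dependence on $r_0$ (only initial values of $A_0,D$ enter) versus $r$ (the space-time norms).

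For $m\geq 1$ I would propagate the estimate to higher derivatives by induction on $m$. Tangential derivatives $\partial^\alpha$ with $\alpha_3=0$ commute well with the boundary condition and produce, after applying the $L^2$ estimate, commutator terms which are bounded in $L^2$ by the product/Moser estimates of \cite[Lemma~2.1]{SpitzMaxwellLinear} and the assumed $\Fdom{\tilde m}$-bounds on the coefficients; the new boundary data $\partial^\alpha g$ is controlled by $\Enormdom{m}{g}$. The genuinely hard step, and the characteristic-boundary obstruction that blocks the classical theory, is recovering the normal derivative $\partial_3 u$: since $A_3^{\operatorname{co}}$ is singular, one cannot simply solve the equation for $\partial_3 u$. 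Here I would exploit the Maxwell structure as in \cite{SpitzMaxwellLinear}: the rank-two decomposition \eqref{EquationDecompositionOfA3co} lets one split $u$ into the component $B^{\operatorname{co}} u$ (whose trace is $g$ and whose $\partial_3$ is recovered from tangential derivatives via the equation projected onto the range of $C^{\operatorname{co}}$) and the complementary ``characteristic'' component, for which the equation together with the divergence-type constraints intrinsic to the $A_j^{\operatorname{co}}$ expresses $\partial_3$ in terms of tangential and time derivatives already estimated at the previous induction step.

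Assembling the pieces, the weighted tangential estimates plus the structural recovery of normal derivatives yield the full $\Gdom{m}$-bound with the constants $C_{m,0}=C_{m,0}(\eta,r_0)$ and $C_m=C_m(\eta,r,T')$ of the theorem. The expected main obstacle is precisely this normal-derivative recovery at higher orders: one must carefully iterate the decomposition so that each $\partial_3^k$ falls only on quantities already controlled, and one must verify that all commutators generated when the variable coefficients $A_0,D$ hit the decomposition remain in $L^2$ with norms bounded by $r$ (for the $T\,C_m$ term) or by $r_0$ (for the $C_{m,0}$ term). Once the a~priori estimate is in place, existence by regularization and the $m=0$ uniqueness close the proof.
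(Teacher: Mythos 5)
The paper itself contains no proof of Theorem~\ref{TheoremExistenceAndUniquenessOnDomain}: immediately before the statement it says that this is a \emph{recall} of the main result of the companion paper~\cite{SpitzMaxwellLinear}, and no proof environment follows. So there is no argument in the present source to compare yours against; the verification lives entirely in the companion paper. That said, your outline is broadly consistent with the strategy the present paper attributes to~\cite{SpitzMaxwellLinear}: reduce to the half-space via charts (this is where the $C^{\tilde m+2}$-boundary is used and where the structured system with $A_1,A_2\in\Fcoeff{\tilde m}{cp}$ and the decomposition~\eqref{EquationDecompositionOfA3co} comes from), prove weighted energy estimates, propagate to tangential derivatives by commuting $\partial^\alpha$ with $\alpha_3=0$, and then exploit the special Maxwell structure to recover $\partial_3 u$ despite the characteristic boundary. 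The paper even tells you where the latter step lives: Lemma~\ref{LemmaCentralEstimateInNormalDirection} is explicitly introduced as a simplified version of Proposition~3.3 and Remark~4.11 of~\cite{SpitzMaxwellLinear}, and this is exactly the normal-derivative recovery you single out as the hard part.

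One piece of your plan does diverge from what the paper says actually happens. You propose obtaining the base ($m=0$) existence by mollifying the coefficients, solving a regularized strictly hyperbolic system via Kato-type theory or vanishing viscosity, and passing to the limit. The paper states instead that ``the case $m=0$ in Theorem~\ref{TheoremExistenceAndUniquenessOnDomain} is already contained in~\cite{Eller}'' and that the companion paper builds the higher-order regularity theory \emph{on top of} Eller's $L^2$-theory rather than rederiving it. This is not a cosmetic difference: regularizing the coefficients does not change the boundary matrix $A_3^{\operatorname{co}}$, so the mollified problem remains characteristic, and a vanishing-viscosity approach would have to contend with boundary layers at the perfectly conducting boundary. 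The $L^2$ existence theory of~\cite{Eller} is itself a nontrivial input that uses the structure of the boundary matrix, so it cannot simply be replaced by a generic approximation scheme. Apart from that, your account matches the paper's description of~\cite{SpitzMaxwellLinear}, but the details you acknowledge as the main obstacle — the iterated normal-derivative recovery and the bookkeeping that separates the $C_{m,0}(\eta,r_0)$ and $C_m(\eta,r,T')$ constants — are precisely what that companion paper supplies and what cannot be checked from the source at hand.
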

 We point out that the scope of~\cite{SpitzMaxwellLinear} was to provide the regularity theory for~\eqref{EquationIBVPIntroduction}, 
 building up on the $L^2$-theory from~\cite{Eller}. The case $m = 0$ in Theorem~\ref{TheoremExistenceAndUniquenessOnDomain} 
 is already contained in~\cite{Eller}.
 We note that we need a further assumption on our material laws $\chi$ and $\sigma$ to guarantee that 
 $\chi(\hat{u})$ and $\sigma(\hat{u})$ have a limit at infinity, which is required in 
 Theorem~\ref{TheoremExistenceAndUniquenessOnDomain}. We therefore define
 \begin{align*}
  &\mlwl{m,n}{G}{\mathcal{U}}{c} = \{\theta \in \ml{m,n}{G}{\mathcal{U}} \colon \exists A \in \R^{n \times n} \text{ such that for all } \\
      &\hspace{8.5em} (x_k, y_k)_k \in (G \times \mathcal{U})^{\N} \text{ with } |x_k| \rightarrow \infty \text{ and } y_k \rightarrow 0: \\
      &\hspace{8.5em} \theta(x_k, y_k) \rightarrow A \text{ as } k \rightarrow \infty\}, \\
   &\mlpdwl{m,n}{G}{\mathcal{U}}{c} = \mlpd{m,n}{G}{\mathcal{U}} \cap \mlwl{m,n}{G}{\mathcal{U}}{c}.
 \end{align*}
 We point out that $\mlwl{m,n}{G}{\mathcal{U}}{c}$ coincides with $\ml{m,n}{G}{\mathcal{U}}$ 
 if $G$ is bounded. 
 Let $\chi \in \mlpdwl{m,6}{G}{\mathcal{U}}{c}$ and $\sigma \in \mlwl{m,6}{G}{\mathcal{U}}{c}$ 
 satisfy~\eqref{EquationPropertyForL2} and take a function $\hat{u} \in \Gdomvar{\tilde{m}}$. 
 Lemma~\ref{LemmaHigherOrderChainRule} and Sobolev's embedding then imply that 
 $\chi(\hat{u})$ is an element of 
 $\Fdomupdwl{\tilde{m}}{c}{\eta}$ for a number $\eta > 0$ and $\sigma(\hat{u})$ is contained in 
 $\Fdomuwl{\tilde{m}}{c}$.
 In the next lemma we prove the uniqueness of solutions of~\eqref{EquationNonlinearIBVP}. 
 \begin{lem}
 \label{LemmaUniquenessOfNonlinearSolution}
 Let $t_0 \in \R, T > 0$, and $J = (t_0, t_0 + T)$. Let $m \in \N$ 
 with $m \geq 3$. Take $\chi \in \mlpdwl{m,6}{G}{\mathcal{U}}{c}$ 
 and $\sigma \in \mlwl{m,6}{G}{\mathcal{U}}{c}$.
 Choose data $f \in \Hadom{m}$, $g \in \Edom{m}$, and $u_0 \in \Hhdom{m}$. 
 Let $u_1$ and $u_2$ be two solutions in $\Gdom{m}$ of~\eqref{EquationNonlinearIBVP} with inhomogeneity $f$, 
 boundary value $g$, and initial value $u_0$ at initial time $t_0$. Then $u_1 = u_2$.
\end{lem}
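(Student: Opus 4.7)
The natural plan is to study the difference $w := u_1 - u_2$ and show it satisfies a linear homogeneous problem to which the $L^2$ a priori estimate of Theorem~\ref{TheoremExistenceAndUniquenessOnDomain} applies. Subtracting the equations satisfied by $u_1, u_2$ and using the identity
\begin{equation*}
  \chi(u_1)\partial_t u_1 - \chi(u_2)\partial_t u_2 = \chi(u_1)\partial_t w + \bigl(\chi(u_1) - \chi(u_2)\bigr)\partial_t u_2,
\end{equation*}
together with the analogous rewriting for the $\sigma$-term, one finds that $w$ solves
\begin{equation*}
  \chi(u_1)\partial_t w + \sum_{j=1}^3 A_j^{\operatorname{co}}\partial_j w + \sigma(u_1)\,w = F, \quad Bw = 0 \text{ on } \partial G, \quad w(t_0)=0,
\end{equation*}
with $F := -\bigl(\chi(u_1)-\chi(u_2)\bigr)\partial_t u_2 - \bigl(\sigma(u_1)-\sigma(u_2)\bigr)u_2$. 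The boundary condition for $w$ is homogeneous since $u_1,u_2$ satisfy the same $Bu = g$, and the initial condition is zero since they share the initial value $u_0$.

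Next I would verify the coefficient hypotheses of Theorem~\ref{TheoremExistenceAndUniquenessOnDomain} with $m=0$ (i.e.\ the $L^2$-theory from~\cite{Eller}). Since $m\geq 3$, Sobolev's embedding gives $u_1,u_2 \in W^{1,\infty}(J\times G)^6$ with $\overline{\image u_i}$ contained in some compact subset $\mathcal{U}_1 \Subset \mathcal{U}$. Invoking Lemma~\ref{LemmaHigherOrderChainRule} together with the assumption $\chi\in\mlpdwl{m,6}{G}{\mathcal{U}}{c}$ and $\sigma\in\mlwl{m,6}{G}{\mathcal{U}}{c}$ (and, if necessary, the modification in Remark~\ref{RemarkChiuInFm}) yields $\chi(u_1)\in\Fdomupdwl{\tilde m}{c}{\eta}$ for some $\eta>0$ and $\sigma(u_1)\in\Fdomuwl{\tilde m}{c}$, with norms controlled by $\Gdomnormwg{m}{u_1}$. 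The trivial compatibility condition $Bw(t_0)=0$ holds because $w(t_0)=0$, so the $m=0$ case of Theorem~\ref{TheoremExistenceAndUniquenessOnDomain} is applicable.

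To estimate $F$, I would use Corollary~\ref{CorollaryEstimateForDifferenceHigherOrder}\ref{ItemDifferenceInLtwoh} (or just the $C^1$-smoothness of $\chi,\sigma$ on $\mathcal{U}_1$, giving Lipschitz bounds) to obtain pointwise
\begin{equation*}
  \|F(t)\|_{\Ltwohdom} \leq C\bigl(\|\partial_t u_2(t)\|_{L^\infty(G)} + \|u_2(t)\|_{L^\infty(G)}\bigr)\|w(t)\|_{\Ltwohdom} \leq C'\|w(t)\|_{\Ltwohdom},
\end{equation*}
where the $L^\infty$-bounds on $u_2$ and $\partial_t u_2$ come from Sobolev embedding of $\Gdom{m}$ with $m \geq 3$. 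The $m=0$ estimate of Theorem~\ref{TheoremExistenceAndUniquenessOnDomain}, with zero initial and boundary data, then reduces to
\begin{equation*}
  \Gdomnorm{0}{w}^2 \leq \frac{C_0}{\gamma}\|F\|_{L^2_\gamma(J\times G)}^2 \leq \frac{C_0 (C')^2 T}{\gamma}\Gdomnorm{0}{w}^2
\end{equation*}
for all $\gamma\geq\gamma_0$. Choosing $\gamma$ large enough to make the prefactor strictly less than $1$ forces $\Gdomnorm{0}{w}=0$, hence $u_1=u_2$.

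The only conceptual obstacle is the quasilinear coupling in the principal part, but this is already resolved in the first step by rewriting the difference so that the dangerous term $(\chi(u_1)-\chi(u_2))\partial_t u_2$ becomes a genuine forcing $F$ which is bounded by $\|w\|_{\Ltwohdom}$; this is exactly the reason we work in $\Gdom{m}$ with $m\geq 3$, where $\partial_t u_2\in L^\infty$ is automatic. After that, the argument is a textbook energy estimate plus the time-weight absorption built into Theorem~\ref{TheoremExistenceAndUniquenessOnDomain}.
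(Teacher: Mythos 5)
Your proof is correct, but it takes a genuinely different route from the paper's. You apply the $m=0$ estimate of Theorem~\ref{TheoremExistenceAndUniquenessOnDomain} once on the whole interval $J$ and absorb the forcing $F$ by choosing the weight parameter $\gamma$ large — a legitimate move since $\gamma_0$ and $C_0$ are fixed independently of $\gamma$ — so that $\Gdomnorm{0}{w}^2 \le \frac{C_0 (C')^2 T}{\gamma} \Gdomnorm{0}{w}^2$ forces $w=0$ directly. The paper instead runs a continuation argument: it sets $T_1 = \sup\{T_0 : u_1 = u_2 \text{ on } [t_0, T_0]\}$, fixes $\gamma = \gamma_0$, and gains the contraction factor from the smallness of the auxiliary interval $[T_1, T_u]$ (exploiting $\|\cdot\|_{L^2(J_u)} \le (T_u - T_1)^{1/2}\|\cdot\|_{L^\infty(J_u)}$) before deriving a contradiction with the definition of $T_1$. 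Both manipulations produce the same small prefactor, but yours eliminates the bootstrap entirely and is arguably cleaner. What the paper's approach buys is consistency of technique with the other arguments in Sections 3–5 where the author also works on short time windows, but nothing substantive forces that choice here. Your preliminary reductions (rewriting $w$ as solving the linearized problem with coefficients frozen at $u_1$, identifying the forcing $F$, verifying $\chi(u_1), \sigma(u_1) \in F^{\mathrm{c}}_3$ via Lemma~\ref{LemmaHigherOrderChainRule} and Remark~\ref{RemarkChiuInFm}, the pointwise Lipschitz estimate of $F$ via $L^\infty$-bounds on $u_2$, $\partial_t u_2$) coincide with the paper's; one small point you leave implicit is that $\image u_1 \cup \image u_2$ lies in a common compact subset of $\mathcal{U}$, which the paper states explicitly and which is needed to get a uniform Lipschitz constant for $\chi$ and $\sigma$.
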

\begin{proof}
 As explained in Remark~\ref{RemarkChiuInFm},  we assume without loss of generality that $\chi$ and $\sigma$ 
 posess property~\eqref{EquationPropertyForL2}. 
 Set 
 \begin{align*}
  K = \{T_0 \in \clJ \colon u_1 = u_2 \text{ on } [t_0, T_0]\}.
 \end{align*}
 This set is nonempty since $u_1(t_0) = u_0 = u_2(t_0)$. Let $T_1 = \sup K$. The continuity of $u_1$ and 
 $u_2$ implies that the two functions coincide on $[t_0, T_1]$. 
 
 Since $u_1$ and $u_2$ are solutions of~\eqref{EquationNonlinearIBVP}, there is a compact 
 subset $\mathcal{U}_1 \subseteq \mathcal{U}$ such that $\image u_1, \image u_2 \subseteq \mathcal{U}_1$.
 We now assume that $T_1$ is not equal to $T$. We then take a time 
  $T_u \in (T_1, T]$ to be fixed below and we set $J_u = [T_1, T_u]$. 
  We observe   that $u_1$ and $u_2$ are both solutions of~\eqref{EquationNonlinearIBVP} 
 in $G_m(J_u \times G)$ with inhomogeneity $f$, boundary value $g$,  and initial value $u_1(T_1) = u_2(T_1)$. In particular, both functions 
 solve the linear initial boundary value problem~\eqref{EquationIBVPIntroduction} with data $f$, $g$, and $u_1(T_1)$ and differential 
 operator $L_1 := L(\chi(u_1),A_1^{\operatorname{co}}, A_2^{\operatorname{co}}, A_3^{\operatorname{co}}, \sigma(u_1))$ respectively 
 $L_2 := L(\chi(u_2),A_1^{\operatorname{co}}, A_2^{\operatorname{co}}, A_3^{\operatorname{co}}, \sigma( u_2))$. 
  Lemma~\ref{LemmaHigherOrderChainRule}  and Sobolev's embedding theorem
 yield that $\chi(u_1)$, $\chi(u_2)$, $\sigma(u_1)$, and $\sigma(u_2)$ are elements of $\Fdomuwl{3}{c}$. 
 Take $r > 0$ such that $\Gdomnormwgv{3}{u_1}{u} \leq r$.
  Lemma~\ref{LemmaHigherOrderChainRule} and Remark~\ref{RemarkChiuInFm} then provide 
 a radius $R = R(\chi,\sigma,r,\mathcal{U}_1)$ such that the bounds 
\begin{align*}
	&\max\{\|\chi(u_1)\|_{\Fdom{3}}, \|\sigma(u_1)\|_{\Fdom{3}} \} \leq R, \\
	&\max\{\Fvarnormdom{2}{\chi(u_1(T_1))}, \max_{1 \leq j \leq 2} \Hhndom{m-1-j}{\partial_t^j \chi(u_1)(T_1)}\} \leq R, \\
	&\max\{\Fvarnormdom{2}{\sigma(u_1(T_1))}, \max_{1 \leq j \leq 2} \Hhndom{m-1-j}{\partial_t^j \sigma(u_1)(T_1)}\} \leq R
\end{align*} 
 hold true.
 We further note that $\chi(u_1)$ is symmetric and uniformly positive definite.
 Therefore, Theorem~\ref{TheoremExistenceAndUniquenessOnDomain} for the differential operator $L_1$ can be applied 
 to $u_1 - u_2$. We take $\eta = \eta(\chi) > 0$ such that $\chi \geq \eta$ and set $\gamma = \gamma_{\ref{TheoremExistenceAndUniquenessOnDomain},0}(\eta, R)$, 
 where $\gamma_{\ref{TheoremExistenceAndUniquenessOnDomain},0}$ denotes the corresponding constant from Theorem~\ref{TheoremExistenceAndUniquenessOnDomain}.
 Theorem~\ref{TheoremExistenceAndUniquenessOnDomain} and Corollary~\ref{CorollaryEstimateForDifferenceHigherOrder}~\ref{ItemDifferenceInG} then show that
 \begin{align*}
  &\Gdomnormv{0}{u_1 - u_2}{u}^2 \leq C_{\ref{TheoremExistenceAndUniquenessOnDomain}}(\eta,R,T) \Ltwodomanv{L_1 u_1 - L_1 u_2}{u}^2 \\
  &= C(\chi,\sigma,r,T,\mathcal{U}_1)  \Ltwodomanv{f - \chi(u_1) \partial_t u_2 - \sigma(u_1) u_2 + \chi(u_2) \partial_t u_2 + \sigma(u_2) u_2 - f}{u}^2 \\
  &\leq C(\chi,\sigma,r,T,\mathcal{U}_1) (T_u - T_1) \|\partial_t u_2\|_{L^\infty(J_u \times G)}^2 \Gdomnormv{0}{\chi(u_1)- \chi(u_2)}{u}^2 \\
      &\quad + C(\chi,\sigma,r,T,\mathcal{U}_1)(T_u - T_1) \|u_2\|_{L^\infty(J_u \times G)}^2 \Gdomnormv{0}{\sigma(u_1)- \sigma(u_2)}{u}^2 \\
  &\leq C(\chi,\sigma,r,T, \mathcal{U}_1)  (\Gdomnormwgv{2}{\partial_t u_2}{u}^2 + \Gdomnormwgv{2}{u_2}{u}^2) (T_u - T_1) \Gdomnormv{0}{u_1 - u_2}{u}^2,
 \end{align*}
 where $C_{\ref{TheoremExistenceAndUniquenessOnDomain}}$ is the corresponding constant from Theorem~\ref{TheoremExistenceAndUniquenessOnDomain}. 
 Fixing the generic constant in the last line of the above estimate, we choose $T_u > T_1$ so small that 
 \begin{align*}
  C(\chi,\sigma,r,T,\mathcal{U}_1)  (\Gdomnormwgv{2}{\partial_t u_2}{u}^2 + \Gdomnormwgv{2}{u_2}{u}^2) (T_u - T_1) \leq \frac{1}{2}.
 \end{align*}
  Hence,
 \begin{align*}
  \Gdomnormv{0}{u_1 - u_2}{u} = 0,
 \end{align*}
 implying $u_1 = u_2$ on $[T_1, T_u]$ and thus on $[t_0, T_u]$. This result contradicts the definition 
 of $T_1$. We conclude that $T_1 = T$, i.e., $u_1 = u_2$ on $J$.
\end{proof}

 
 Finally, we can combine all the preparations in order to prove the local existence of solutions 
 of~\eqref{EquationNonlinearIBVP} using Banach's fixed point theorem. For the self-mapping and 
 the contraction property we heavily rely on Theorem~\ref{TheoremExistenceAndUniquenessOnDomain}. 
 Special care in the treatment of the constants is needed to close the argument. In particular, the 
 structure of the constants in Theorem~\ref{TheoremExistenceAndUniquenessOnDomain} is crucial here.
 \begin{theorem}
 \label{TheoremLocalExistenceNonlinear}
 Let $t_0 \in \R$, $T > 0$,  $J = (t_0,t_0 + T)$, and $m \in \N$ with $m \geq 3$. 
 Take $\chi \in \mlpdwl{m,6}{G}{\mathcal{U}}{c}$ and $\sigma \in \mlwl{m,6}{G}{\mathcal{U}}{c}$.
 Let \begin{align*}
   	B(x) = \begin{pmatrix}
             0 &\nu_3(x) &-\nu_2(x) &0 &0 &0 \\
      -\nu_3(x) &0 &\nu_1(x) &0 &0 &0 \\
      \nu_2(x) &-\nu_1(x) &0 &0 &0 &0 
            \end{pmatrix}
   \end{align*}
   for all $x \in \partial G$, where $\nu$ denotes the unit outer normal vector of $\partial G$.
 Choose an inhomogeneity $f \in \Hadom{m}$, boundary value $g \in \Edom{m}$, and initial value $u_0 \in \Hhdom{m}$ with $\overline{\image u_0} \subseteq \mathcal{U}$
 such that the tuple $(\chi, \sigma, t_0, B, f, g, u_0)$ fulfills 
 the nonlinear compatibility conditions~\eqref{EquationNonlinearCompatibilityConditions} of order $m$.
 Choose a radius $r > 0$
 satisfying
 \begin{align}
 \label{EquationDataSmallerRadiusInLocalExistenceTheorem}
  &\sum_{j = 0}^{m-1} \Hhndom{m-1-j}{\partial_t^j f(t_0)}^2 + \Enormwgdom{m}{g}^2 + \Hhndom{m}{u_0}^2 + \Handom{m}{f}^2 \leq r^2.
 \end{align}
 Take a number $\kappa > 0$ with 
 \begin{equation*}
  \dist(\{u_0(x) \colon x \in G\}, \partial \mathcal{U}) > \kappa.
 \end{equation*}
 Then there exists a time
 $\tau = \tau(\chi,\sigma,m,T,r,\kappa) > 0$ such that the nonlinear initial boundary value 
 problem~\eqref{EquationNonlinearIBVP} with data $f$, $g$, and $u_0$ 
 has a unique 
 solution $u$ on $[t_0, t_0 + \tau]$ which belongs to $G_m(J_\tau \times G)$, 
 where $J_\tau = (t_0, t_0 + \tau)$.
\end{theorem}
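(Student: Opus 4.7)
The plan is to apply Banach's fixed point theorem on a set of admissible ``frozen'' functions $\hat u$ which realize the candidate time-derivatives at $t_0$ prescribed by $S_{\chi,\sigma,G,m,p}(t_0,f,u_0)$. Concretely, for a radius $R>0$ and a time $\tau\in(0,T]$ to be chosen, set $J_\tau = (t_0, t_0 + \tau)$ and
\begin{align*}
  M_\tau(R) = \bigl\{\hat u \in G_m(J_\tau\times G):\; &\partial_t^p\hat u(t_0)=S_{\chi,\sigma,G,m,p}(t_0,f,u_0),\; 0\le p\le m;\\
  &\GdomnormwgP{m}{\hat u}\le R;\; \dist(\hat u(J_\tau\times G),\partial\mathcal U)\ge\kappa/2\bigr\},
\end{align*}
equipped with the metric induced by $\|\cdot\|_{G_0(J_\tau\times G)}$. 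Lemma~\ref{LemmaCorrespondenceLinearNonlinearInZero}\ref{ItemNonEmptyFixedPointSpace} supplies a candidate element together with an a priori $G_m$-bound depending only on $\chi,\sigma,m,r$ and a compact set $\mathcal U_1\Subset\mathcal U$ containing $\image u_0$; this determines the natural size of $R$. The fundamental theorem of calculus combined with the embedding of $G_{\tilde{m}}$ into $L^\infty$ shows that the $\kappa/2$-distance condition persists automatically provided $\tau$ is sufficiently small in terms of $R$ and $\kappa$.

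Given $\hat u\in M_\tau(R)$, Lemma~\ref{LemmaHigherOrderChainRule} and Remark~\ref{RemarkChiuInFm} place $\chi(\hat u)$ and $\sigma(\hat u)$ in the coefficient classes required by Theorem~\ref{TheoremExistenceAndUniquenessOnDomain}, with uniform positivity $\chi(\hat u)\ge\eta=\eta(\chi,\mathcal U_1)>0$, $F_{\tilde{m}}$-norms controlled by $R$, and $t_0$-traces of all derivatives bounded solely in terms of $r$ via Lemma~\ref{LemmaNonlinearHigherOrderInitialValues}. The linear compatibility conditions~\eqref{EquationCompatibilityConditionPrecised} for the frozen problem are equivalent to the nonlinear ones assumed on the data by Lemma~\ref{LemmaCorrespondenceLinearNonlinearInZero}\ref{ItemConnectionLinearNonlinerOperatorS}. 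Hence Theorem~\ref{TheoremExistenceAndUniquenessOnDomain} yields a unique $\Phi(\hat u)\in G_m(J_\tau\times G)$ solving~\eqref{EquationIBVPIntroduction} with coefficients $\chi(\hat u),\sigma(\hat u)$ and data $f,g,u_0$. The key to the self-mapping property is the dichotomy built into the estimate of Theorem~\ref{TheoremExistenceAndUniquenessOnDomain}: the constant $C_{m,0}$ multiplying the data depends only on $\eta$ and the data-bound $r_0$, whereas the factors $\tau C_m$ and $C_m/\gamma$ come with constants that may depend on $R$. I therefore fix $R^2$ as a suitable multiple of $C_{m,0}\, r^2$, then pick $\gamma$ so large that $C_m r^2/\gamma\le R^2/4$, and finally shrink $\tau$ so that $\tau C_m e^{mC_1\tau}\, r^2\le R^2/4$ while the distance constraint survives. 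The identity $\partial_t^p\Phi(\hat u)(t_0)=S_{\chi,\sigma,G,m,p}(t_0,f,u_0)$ holds because both sides satisfy the same recursion, whence $\Phi(\hat u)\in M_\tau(R)$.

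For the contraction, given $\hat u_1,\hat u_2\in M_\tau(R)$ the difference $w=\Phi(\hat u_1)-\Phi(\hat u_2)$ solves the linear IBVP with coefficients $\chi(\hat u_1),\sigma(\hat u_1)$, vanishing initial and boundary data, and right-hand side
\begin{equation*}
  F=(\chi(\hat u_2)-\chi(\hat u_1))\partial_t\Phi(\hat u_2)+(\sigma(\hat u_2)-\sigma(\hat u_1))\Phi(\hat u_2).
\end{equation*}
Theorem~\ref{TheoremExistenceAndUniquenessOnDomain} at level $m=0$ bounds $\|w\|_{G_0(J_\tau\times G)}$ by $C\,\|F\|_{L^2_\gamma(J_\tau\times G)}$, and Corollary~\ref{CorollaryEstimateForDifferenceHigherOrder}\ref{ItemDifferenceInG}, together with the $G_m$-bound $R$ on $\Phi(\hat u_2)$ (which controls $\partial_t\Phi(\hat u_2)$ and $\Phi(\hat u_2)$ in $L^\infty$ via Sobolev's embedding), reduces this to $C(\chi,\sigma,R)\,\tau^{1/2}\,\|\hat u_1-\hat u_2\|_{G_0(J_\tau\times G)}$. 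A final shrinking of $\tau$ produces contraction in the $G_0$-metric. The set $M_\tau(R)$ is closed in $G_0(J_\tau\times G)$: a sequence in $M_\tau(R)$ that converges in $G_0$ has a $G_0$-limit satisfying the pointwise distance constraint, and the uniform $G_m$-bound passes to the limit by weak-$*$ compactness of balls in $G_m$, so the limit lies in $M_\tau(R)$. Banach's fixed point theorem therefore produces a unique fixed point $u=\Phi(u)$, which is by construction a $G_m$-solution of~\eqref{EquationNonlinearIBVP}; global uniqueness among $G_m$-solutions is then provided by Lemma~\ref{LemmaUniquenessOfNonlinearSolution}.

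The main obstacle is the careful coordination of constants and scales: one must simultaneously keep $\chi(\hat u)$ uniformly symmetric positive-definite and $F_{\tilde{m}}$-bounded while letting $R$ be governed only by the data; exploit both the $(C_{m,0}+\tau C_m)$-splitting and the $\gamma^{-1}$-mechanism in Theorem~\ref{TheoremExistenceAndUniquenessOnDomain} so that $R$ may be chosen \emph{before} $\tau$ and $\gamma$; and obtain the contraction in a strictly weaker norm than the one defining the ball (the standard quasilinear loss-of-regularity device), which in turn forces the $G_0$-closedness verification for $M_\tau(R)$.
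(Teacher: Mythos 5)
Your global strategy --- freezing $\hat u$ in the nonlinearities, solving the linear frozen problem by Theorem~\ref{TheoremExistenceAndUniquenessOnDomain}, and closing a contraction with constants tracked so that the radius $R$ depends only on the data and the time step $\tau$ is chosen last --- coincides with the paper's, and the self-mapping step and the $G_0$-level contraction estimate you sketch are essentially sound. The gap lies in the completeness of your fixed-point space $M_\tau(R)$. You place $M_\tau(R)$ inside $\GdomP{m}$ and argue $G_0$-closedness via ``weak-$*$ compactness of balls in $G_m$''. That claim is false: $\GdomP{m}$ is built from spaces $C^j(\overline{J_\tau},\Hhdom{m-j})$ of functions continuous (indeed $C^j$) in time, and $C(\overline{J_\tau},\Hhdom{k})$ is not a dual Banach space, so its bounded sets are not weak-$*$ relatively compact. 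This is precisely why the paper works with the larger space $\GdomvarP{m}$: its building blocks $L^\infty(J_\tau,\Ltwohdom)=(L^1(J_\tau,\Ltwohdom))^*$ are dual spaces, so Banach--Alaoglu applies there, at the cost that the weak-$*$ limit is a priori only $L^\infty$ in time.

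A second, independent problem: $M_\tau(R)$ also imposes $\partial_t^p\hat u(t_0)=S_{\chi,\sigma,G,m,p}(t_0,f,u_0)$ for all $p\le m$, but these constraints are not closed under $G_0$-convergence. From $\|v_n-v\|_{G_0(J_\tau\times G)}\to 0$ you recover convergence of $v_n(t_0)$ only; the traces $\partial_t^j v_n(t_0)$ for $j\ge 1$ need not converge even if $(v_n)$ is bounded in $\GdomP{m}$ --- the scalar example $v_n(t)=n^{-1}\sin(nt)$, bounded in $C^1$ and tending to $0$ in $C^0$ while $v_n'(0)\equiv 1$, already breaks the step from $j=0$ to $j=1$. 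The paper's construction is calibrated exactly to avoid both issues: it takes $\GdomvarP{m}$ as ambient space, requires the matching of time derivatives only for $p\le m-1$, and contracts in the $G_{m-1}$-metric, which is strong enough to transport those conditions and the $L^\infty$-proximity constraint to the limit, yet one derivative below $G_m$ so the quasilinear loss of regularity still leaves room for a contraction. If you insist on the $G_0$-metric, you would need a different completion argument (for instance passing to the distributional limit of the equation along the Picard iterates and then re-deriving the $G_m$-regularity of the limit from the linear theory), which is a substantially longer detour than your proposal indicates.
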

\begin{proof}
Without loss of generality we assume $t_0 = 0$ and that~\eqref{EquationPropertyForL2} holds true 
for $\chi$ and $\sigma$, cf. Remark~\ref{RemarkChiuInFm}. 
If $f = 0$, $g = 0$, and $u_0 = 0$, then $u = 0$ is a $\Gdom{m}$-solution of~\eqref{EquationNonlinearIBVP} 
and it is unique by Lemma~\ref{LemmaUniquenessOfNonlinearSolution}. So in the following we assume 
$\Handom{m}{f} + \Enormwgdom{m}{g} + \Hhndom{m}{u_0} > 0$.
Recall that the map $S_{\chi,\sigma,G,m,p}$ was defined 
in~\eqref{EquationDefinitionHigherOrderInitialValuesNonlinear} for $0 \leq p \leq m$.
Let $\tau \in (0,T]$. We set $J_\tau = (0, \tau)$ and 
\begin{equation}
\label{EquationDefinitionUkappa}
 \mathcal{U}_{\kappa} = \{y \in \mathcal{U} \colon \dist(y, \partial \mathcal{U}) \geq \kappa \} \cap \overline{B}_{2C_{\operatorname{Sob}} r}(0),
\end{equation}
where $C_{\operatorname{Sob}}$ denotes the constant for the Sobolev embedding from $\Hhdom{2}$ into $L^\infty(G)$. 
Then $\mathcal{U}_\kappa$ is compact and $\overline{\image u_0}$ is contained in $\mathcal{U}_\kappa$.

I) Let $R > 0$. We set
 \begin{align*}
  B_R(J_\tau) := \{v \in \GdomvarP{m} \colon &\GdomnormwgP{m}{v} \leq R, \, \|v - u_0\|_{L^\infty(J_\tau \times G)} \leq \kappa/2, \\
  &\partial_t^j v(0) = S_{\chi,\sigma,G,m,j}(0,f,u_0) \text{ for } 0 \leq j \leq m-1\}
 \end{align*}
 and equip it with the metric $d(v_1,v_2) = \GdomnormwgP{m-1}{v_1 - v_2}$. We first show that $B_R(J_\tau)$
 is a complete metric space. Recall that 
 $\GdomvarP{m}$ is continuously embedded in $\GdomP{m-1}$ so that $B_R(J_\tau)$ is well defined. Moreover, 
 Lemma~\ref{LemmaCorrespondenceLinearNonlinearInZero}~\ref{ItemNonEmptyFixedPointSpace} shows that 
 $B_R(J_\tau)$ is nonempty for all 
 $R > C_{\ref{LemmaCorrespondenceLinearNonlinearInZero}\ref{ItemNonEmptyFixedPointSpace}}( \chi, \sigma, m, r, \mathcal{U}_\kappa) \cdot (m+1)r $.
 
 Let $(v_n)_n$ be a Cauchy sequence in $(B_R(J_\tau), d)$. The functions $v_n$ then tend to $v$ in $\GdomP{m-1}$ as $n \rightarrow \infty$, 
 and hence $v$ satisfies $\partial_t^j v(0) = S_{\chi,\sigma,G,m,j}(0,f,u_0)$ for $0 \leq j \leq m-1$ 
 and $\GdomnormwgP{m-1}{v} \leq R$.
 Let $\alpha \in \N_0^4$ with $|\alpha| = m$. The 
 sequence $(\partial^\alpha v_n)_n$ is bounded in $L^\infty(J_\tau, \Ltwohdom) = (L^1(J_\tau, \Ltwohdom))^*$.
 The Banach-Alaoglu 
 Theorem thus gives a subsequence (again denoted by $(\partial^\alpha v_n)_n$) with 
 $\sigma^*$-limit $v_\alpha$
 in $L^\infty(J_\tau, \Ltwohdom)$. It is straightforward to check that 
 $\partial^\alpha v = v_\alpha$. In particular, $v$
 belongs to $\GdomvarP{m}$ with norm less or equal $R$.
 Finally, as $m \geq 3$, we infer 
 \begin{align*}
  \|v - u_0\|_{L^\infty(J_\tau \times G)} &\leq \|v - v_n\|_{L^\infty(J_\tau \times G)} + \|v_n - u_0\|_{L^\infty(J_\tau \times G)} \\
  &\leq C_{\operatorname{Sob}} \GdomnormwgP{2}{v - v_n} + \kappa \longrightarrow \kappa
 \end{align*}
 as $n \rightarrow \infty$.
 We conclude that $v$ again belongs to $B_R(J_\tau)$.
 
II) 
Let $\hat{u} \in B_R(J_\tau)$. Take $\eta  = \eta(\chi) > 0$ such that $\chi \geq \eta$.
Then $\chi(\hat{u})$ is contained in $F_{m,\eta}^{\operatorname{c}}(J_\tau \times G)$ 
and $\sigma(\hat{u})$ is an element of $F_{m}^{\operatorname{c}}(J_\tau \times G)$ 
by Lemma~\ref{LemmaHigherOrderChainRule}, Remark~\ref{RemarkChiuInFm},
and Sobolev's embedding. Lemma~\ref{LemmaCorrespondenceLinearNonlinearInZero}~\ref{ItemConnectionLinearNonlinerOperatorS} 
and the compatibility of $(\chi,\sigma,t_0,B,f,g,u_0)$ imply that the tuple 
$(t_0,\chi(\hat{u}), A_1^{\operatorname{co}}, A_2^{\operatorname{co}}, A_3^{\operatorname{co}},\sigma(\hat{u}),B,f,g,u_0)$ 
fulfills the linear compatibility conditions~\eqref{EquationCompatibilityConditionPrecised}.
Theorem~\ref{TheoremExistenceAndUniquenessOnDomain} yields a solution $u \in \GdomP{m}$ of 
the linear initial boundary value
problem~\eqref{EquationIBVPIntroduction} with differential operator $L(\chi(\hat{u}),A_1^{\operatorname{co}}, A_2^{\operatorname{co}}, A_3^{\operatorname{co}}, \sigma(\hat{u}))$, 
inhomogeneity $f$, boundary value $g$, and initial value $u_0$. 
 One thus defines a mapping $\Phi \colon \hat{u} \mapsto u$ from $B_R(J_\tau)$ to $\GdomP{m}$. We want to prove 
that $\Phi$ also maps $B_R(J_\tau)$ into $B_R(J_\tau)$ for a suitable radius $R$ and a sufficiently small time interval $J_\tau$.

For this purpose take numbers $\tau \in (0,T]$ and $R > C_{\ref{LemmaCorrespondenceLinearNonlinearInZero}\ref{ItemNonEmptyFixedPointSpace}}( \chi, \sigma, m, T,r) (m+1) r$ 
which will be fixed below. Let $\hat{u} \in B_R(J_\tau)$. We first note that there is a constant
$C_{\ref{LemmaNonlinearHigherOrderInitialValues}}(\chi,\sigma,m,r,\mathcal{U}_\kappa)$
such that
\begin{align}
\label{EquationBoundForLinearHigherOrderInitialValue}
 \Hhn{m-p}{S_{\chi,\sigma,G,m,p}(0,f,u_0)} \leq C_{\ref{LemmaNonlinearHigherOrderInitialValues}}(\chi,\sigma,m,r,\mathcal{U}_\kappa)
\end{align}
for all $p \in \{0,\ldots,m\}$ due to Lemma~\ref{LemmaNonlinearHigherOrderInitialValues}.
Lemma~\ref{LemmaHigherOrderChainRule}~\ref{ItemHigherOrderChainRuleInHh}  
further provides a constant $C_{\ref{LemmaHigherOrderChainRule}\ref{ItemHigherOrderChainRuleInHh}}$ such that
\begin{align*}
 \Fvarnormdom{m-1}{\chi(\hat{u})(0)} &= \Fvarnormdom{m-1}{\chi(u_0)} \leq C_{\ref{LemmaHigherOrderChainRule}\ref{ItemHigherOrderChainRuleInHh}}(\chi,m,r,\mathcal{U}_\kappa), \\
 \Fvarnormdom{m-1}{\sigma(\hat{u})(0)} &= \Fvarnormdom{m-1}{\sigma(u_0)} \leq C_{\ref{LemmaHigherOrderChainRule}\ref{ItemHigherOrderChainRuleInHh}}(\sigma,m,r,\mathcal{U}_\kappa).
\end{align*}
Note that $\overline{\image \hat{u}}$ is contained in the compact set
\begin{equation}
\label{EquationDefinitionTildeUkappa}
 \tilde{\mathcal{U}}_\kappa = \mathcal{U}_\kappa + \overline{B}(0,\kappa/2) \subseteq \mathcal{U}
\end{equation}
as $\hat{u} \in B_R(J_\tau)$.
From Lemma~\ref{LemmaHigherOrderChainRule}~\ref{ItemHigherOrderChainRuleMixed}
and~\eqref{EquationBoundForLinearHigherOrderInitialValue} we deduce the bound
\begin{align*}
 &\Hhndom{m-l-1}{\partial_t^l \chi(\hat{u})(0)} \leq C_{\ref{LemmaHigherOrderChainRule}\ref{ItemHigherOrderChainRuleMixed}}(\chi, m, \mathcal{U}_\kappa) (1 + \max_{0 \leq k \leq l} \Hhndom{m-k-1}{\partial_t^k \hat{u}(0)})^{m-1} \\
 &= C_{\ref{LemmaHigherOrderChainRule}\ref{ItemHigherOrderChainRuleMixed}}(\chi, m, \mathcal{U}_\kappa) (1 + \max_{0 \leq k \leq l} \Hhndom{m-k-1}{S_{\chi,\sigma,G,m,k}(0,f,u_0)})^{m-1} \\
 &\leq C_{\ref{LemmaHigherOrderChainRule}\ref{ItemHigherOrderChainRuleMixed}}(\chi, m, \mathcal{U}_\kappa) (1 + C_{\ref{LemmaNonlinearHigherOrderInitialValues}}(\chi,\sigma,m,r,\mathcal{U}_\kappa))^{m-1}, \\
 &\Hhndom{m-l-1}{\partial_t^l \sigma(\hat{u})(0)} \leq C_{\ref{LemmaHigherOrderChainRule}\ref{ItemHigherOrderChainRuleMixed}}(\sigma, m,  \mathcal{U}_\kappa) (1 + C_{\ref{LemmaNonlinearHigherOrderInitialValues}}(\chi,\sigma,m,r,\mathcal{U}_\kappa))^{m-1}
\end{align*}
for all $l \in \{1,\ldots,m-1\}$. We thus find a radius $r_0 = r_0(\chi,\sigma,m,r,\kappa)$ such that
\begin{align}
\label{EquationPrerequisitesInZeroForAPrioriEstimates}
 &\max\{\Fvarnormdom{m-1}{\chi(\hat{u})(0)}, \max_{1 \leq l \leq m-1} \Hhndom{m-l-1}{\partial_t^l \chi(\hat{u})(0)}\} \leq r_0, \nonumber \\
 &\max\{\Fvarnormdom{m-1}{\sigma(\hat{u})(0)}, \max_{1 \leq l \leq m-1} \Hhndom{m-l-1}{\partial_t^l \sigma(\hat{u})(0)}\} \leq r_0.
\end{align}
As $\hat{u}$ belongs to $B_R(J_\tau)$, Lemma~\ref{LemmaHigherOrderChainRule}~\ref{ItemHigherOrderChainRuleInG} 
gives
\begin{align*}
 \Fnormdom{m}{\chi(\hat{u})}, \Fnormdom{m}{\sigma(\hat{u})} &\leq C_{\ref{LemmaHigherOrderChainRule}\ref{ItemHigherOrderChainRuleInG}}(\chi,\sigma,m,\tilde{\mathcal{U}}_\kappa)(1+R)^m.
\end{align*}
We thus obtain a radius $R_1 = R_1(\chi, \sigma, m, R, \kappa)$ with 
\begin{align}
 \label{EquationPrerequisitesInGmForAPrioriEstimates}
 \Fnormdom{m}{\chi(\hat{u})} \leq R_1 \qquad \text{and} \qquad \Fnormdom{m}{\sigma( \hat{u})} \leq R_1.
\end{align}

We next define the constant $C_{m,0} = C_{m,0}(\chi,\sigma,r,\kappa)$ by
\begin{align}
\label{EquationConstantInZeroFromAPrioriEstimates}
 C_{m,0}(\chi,\sigma,r,\kappa) &= C_{\ref{TheoremExistenceAndUniquenessOnDomain},m,0}(\eta(\chi),r_0(\chi,\sigma,m,r,\kappa)),
\end{align}
where $C_{\ref{TheoremExistenceAndUniquenessOnDomain},m,0}$ denotes the constant $C_{m,0}$ from Theorem~\ref{TheoremExistenceAndUniquenessOnDomain}.
We set the radius $R = R(\chi,\sigma,m,r,\kappa)$ for $B_R(J_\tau)$ to be
\begin{align}
 \label{EquationRadiusForFixedPointSpace}
 R(\chi,\sigma,m,r,\kappa) = \max\Big\{\sqrt{6 \, C_{m,0}(\chi,\sigma,r,\kappa)}\, r, \, C_{\ref{LemmaCorrespondenceLinearNonlinearInZero}\ref{ItemNonEmptyFixedPointSpace}}( \chi, \sigma, m, r,\mathcal{U}_\kappa)(m+1) r + 1\Big\}.
\end{align}
We further introduce the constants
\begin{align}
 \gamma_m = \gamma_m(\chi, \sigma, T,r,\kappa) &:= \gamma_{\ref{TheoremExistenceAndUniquenessOnDomain},m}(\eta(\chi),R_1(\chi,\sigma,m,R(\chi,\sigma,m,r,\kappa),\kappa),T), \label{EquationGammaFromAPrioriEstimates} \\
 C_m = C_m(\chi,\sigma,T,r) &:= C_{\ref{TheoremExistenceAndUniquenessOnDomain},m}(\eta(\chi), R_1(\chi,\sigma,m,R(\chi,\sigma,m,r,\kappa),\kappa),T), \label{EquationConstantFromAPrioriEstimates}
\end{align}
where $\gamma_{\ref{TheoremExistenceAndUniquenessOnDomain},m}$ and 
$C_{\ref{TheoremExistenceAndUniquenessOnDomain},m}$ denote the corresponding constants from Theorem~\ref{TheoremExistenceAndUniquenessOnDomain}.
 Let
\begin{align*}
 C_{\ref{CorollaryEstimateForDifferenceHigherOrder}\ref{ItemDifferenceInG}}(\theta,m, R, \tilde{\mathcal{U}}_\kappa)
\end{align*}
denote the constant arising from the application of 
Corollary~\ref{CorollaryEstimateForDifferenceHigherOrder}~\ref{ItemDifferenceInG} 
to the components of $\theta \in \ml{m,6}{G}{\mathcal{U}}$.

With these constants at hand we define the parameter $\gamma = \gamma(\chi, \sigma, m, T, r,\kappa)$ and the time step $\tau = \tau(\chi, \sigma, m, T, r,\kappa)$ 
by
\begin{align}
 \gamma &= \max\Big\{\gamma_m, \,C_{m,0}^{-1} C_m \Big\} ,  \label{EquationDefinitionGammaForFixedPointArgument} \\
  \tau &= \min\Big\{T, \, (2 \gamma + m C_{\ref{TheoremExistenceAndUniquenessOnDomain},1})^{-1} \log{2}, \, C_m^{-1} C_{m,0}, (C_{\operatorname{Sob}} R)^{-1} \kappa, \label{EquationDefinitionOfTauForFixedPointArgument}\\
      &\hspace{4em} [32  R^2 C_{m,0} C_{P}^2(C_{\ref{CorollaryEstimateForDifferenceHigherOrder}\ref{ItemDifferenceInG}}^2(\chi,m, R, \tilde{\mathcal{U}}_\kappa) 
	+ C_{\ref{CorollaryEstimateForDifferenceHigherOrder}\ref{ItemDifferenceInG}}^2(\sigma,m,R,\tilde{\mathcal{U}}_\kappa))]^{-1}\Big\}, \nonumber 
\end{align}
where $C_{P}$ and $C_{\ref{TheoremExistenceAndUniquenessOnDomain},1}$ denote 
the corresponding constants from~\cite[Lemma~2.1]{SpitzMaxwellLinear} and Theorem~\ref{TheoremExistenceAndUniquenessOnDomain}
respectively. Observe that $\gamma$ and $\tau$ actually only depend on $\chi$, $\sigma$, $m$, $T$, $r$, and $\kappa$ as $C_{m,0}$, 
$C_m$, $C_{\ref{TheoremExistenceAndUniquenessOnDomain},1}$, and $R$ only depend on these quantities (see~\eqref{EquationConstantInZeroFromAPrioriEstimates} 
to~\eqref{EquationConstantFromAPrioriEstimates}).
For later reference we note that the choice of $\gamma$ and $\tau$ implies
\begin{align}
 &\gamma \geq \gamma_m, \label{EquationGammaLargerGammam}\\
 &\frac{C_m}{\gamma} \leq C_{m,0}, \label{EquationGammaAndConstants} \\
  &\tau \leq T \label{EquationTimeStepSmallerT}, \\
  &(2 \gamma + m C_{\ref{TheoremExistenceAndUniquenessOnDomain},1}) \, \tau \leq \log{2} \label{EquationExponentSmallerLog2}, \\
  &C_m \tau \leq C_{m,0} \label{EquationCmTauSmallerC0}, \\
  &C_{\operatorname{Sob}} R \tau \leq \kappa, \label{EquationCsobRtauSmallerKappa} \\
  &4 C_{m,0} C_{P}^2 C_{\ref{CorollaryEstimateForDifferenceHigherOrder}\ref{ItemDifferenceInG}}^2(\theta,m, 6, R,\tilde{\mathcal{U}}_\kappa) R^2 \, \tau \leq \frac{1}{8},
  \hspace{3em} \theta \in \{\chi,\sigma\}. \label{EquationTauLongTerm}
\end{align}

III) 
We want to bound the function $\Phi(\hat{u})$ from step~II) by means of Theorem~\ref{TheoremExistenceAndUniquenessOnDomain}. In view of the 
estimates~\eqref{EquationPrerequisitesInZeroForAPrioriEstimates} and~\eqref{EquationPrerequisitesInGmForAPrioriEstimates},
the definitions of $C_{m,0}$, $\gamma_m$, and $C_m$ in~\eqref{EquationConstantInZeroFromAPrioriEstimates}, \eqref{EquationGammaFromAPrioriEstimates}, 
and~\eqref{EquationConstantFromAPrioriEstimates}, respectively, fit to the assertion of Theorem~\ref{TheoremExistenceAndUniquenessOnDomain}.
Using also~\eqref{EquationGammaLargerGammam} and~\eqref{EquationTimeStepSmallerT}, we arrive at the inequality
\begin{align*}
 &\GdomnormwgP{m}{\Phi(\hat{u})}^2 \leq e^{2\gamma\tau} \GdomnormP{m}{\Phi(\hat{u})}^2 \\
 &\leq (C_{m,0} + \tau C_m) e^{(2\gamma + m C_{\ref{TheoremExistenceAndUniquenessOnDomain},1}) \tau}\Big(\sum_{j = 0}^{m-1} \Hhndom{m-1-j}{\partial_t^j f(0)}^2  \\
 &\hspace{2em} + \EnormdomP{m}{g}^2+ \Hhndom{m}{u_0}^2\Big)  + \frac{C_m}{\gamma} e^{2\gamma  \tau} \HangammaPdom{m}{f}^2.
\end{align*}
Observe that~\eqref{EquationDataSmallerRadiusInLocalExistenceTheorem} yields
\begin{align*}
  \HangammaPdom{m}{f}^2 \leq \HanPdom{m}{f}^2 \leq \Handom{m}{f}^2 \leq r^2
\end{align*}
and analogously $\EnormdomP{m}{g}^2 \leq \Enormwgdom{m}{g}^2$. Employing~\eqref{EquationCmTauSmallerC0}, \eqref{EquationGammaAndConstants}, 
\eqref{EquationExponentSmallerLog2}, \eqref{EquationDataSmallerRadiusInLocalExistenceTheorem}, and~\eqref{EquationRadiusForFixedPointSpace}, 
we then deduce the inequalities
\begin{align*}
 \GdomnormwgP{m}{\Phi(\hat{u})}^2 &\leq (C_{m,0} + C_{m,0}) e^{\log{2}}  r^2 + C_{m,0} e^{\log{2}} r^2 
 = 6 C_{m,0} r^2 \leq R^2, \\
 \GdomnormwgP{m}{\Phi(\hat{u})} &\leq R.
\end{align*}

Since $\Phi(\hat{u})$ belongs to $\GdomP{m}$, identity~(2.1) in~\cite{SpitzMaxwellLinear} (which is 
the linear counterpart to~\eqref{EquationTimeDerivativesOfSolutionEqualSChiSigmaG})
shows that 
\begin{align*}
    \partial_t^p \Phi(\hat{u})(0) = S_{G,m,p}(0, \chi(\hat{u}),A_1^{\operatorname{co}}, A_2^{\operatorname{co}},A_3^{\operatorname{co}}, \sigma(\hat{u}), f, u_0)
\end{align*}    
for all $p \in \{0,\ldots,m\}$. On the other hand, as an element of $B_R(J_\tau)$, the function $\hat{u}$ satisfies 
$\partial_t^p \hat{u}(0) = S_{\chi,\sigma,G,m,p}(0,f,u_0)$ for all $p \in \{0,\ldots,m-1\}$. 
Lemma~\ref{LemmaCorrespondenceLinearNonlinearInZero}~\ref{ItemConnectionLinearNonlinerOperatorS} thus yields
\begin{align*}
 \partial_t^p \Phi(\hat{u})(0) = S_{G,m,p}(0, \chi(\hat{u}), A_1^{\operatorname{co}}, A_2^{\operatorname{co}},A_3^{\operatorname{co}}, \sigma(\hat{u}), f, u_0) = S_{\chi,\sigma,G,m,p}(0,f,u_0)
\end{align*}
for all $p \in \{0,\ldots,m-1\}$.
We further  estimate
 \begin{align*}
  &\|\Phi(\hat{u}) - u_0\|_{L^\infty(J_\tau \times G)} = \Big\| \Phi(\hat{u})(0) + \int_0^t \partial_t \Phi(\hat{u})(s) ds - u_0 \Big\|_{L^\infty(J_\tau \times G)} \\
  &=  \Big\| \int_0^t \partial_t \Phi(\hat{u})(s) ds \Big\|_{L^\infty(J_\tau \times G)} 
  \leq C_{\operatorname{Sob}} \sup_{t \in (0,\tau)} \int_{0}^t \Hhndom{2}{\partial_t \Phi(\hat{u})(s)} ds \\
  &\leq C_{\operatorname{Sob}} \tau \GdomnormwgP{2}{\partial_t \Phi(\hat{u})} \leq C_{\operatorname{Sob}} \tau R \leq \kappa
 \end{align*}
 for all $\hat{u} \in B_R(J_\tau)$, where we used that $\Phi(\hat{u})(0) = u_0$ for $\hat{u} \in B_R(J_\tau)$
 and~\eqref{EquationCsobRtauSmallerKappa}.
We conclude that $\Phi(\hat{u})$ belongs to $B_R(J_\tau)$, i.e., $\Phi$ maps $B_R(J_\tau)$ into itself.

IV) Let $\hat{u}_1, \hat{u}_2 \in B_R(J_\tau)$. Since
$\chi(\hat{u}_i)$ and $\sigma(\hat{u}_i)$ belong to 
$\FdomP{m}$ for $i \in \{1,2\}$, Lemma~2.1 of~\cite{SpitzMaxwellLinear} implies that $\chi(\hat{u}_i) \partial_t \Phi(\hat{u}_2)$ 
and $\sigma(\hat{u}_i) \Phi(\hat{u}_2)$ are elements of $\GdomvarP{m-1} \hookrightarrow \HaPdom{m-1}$ for $i \in \{1,2\}$.
The function $\Phi(\hat{u}_2)$ thus fulfills 
\begin{align*}
&L(\chi(\hat{u}_1),A_1^{\operatorname{co}}, A_2^{\operatorname{co}},A_3^{\operatorname{co}}, \sigma(\hat{u}_1)) \Phi(\hat{u}_2) \\
&= \chi(\hat{u}_1) \partial_t \Phi(\hat{u}_2) + \sigma(\hat{u}_1) \Phi(\hat{u}_2) - \chi(\hat{u}_2) \partial_t \Phi(\hat{u}_2) - \sigma(\hat{u}_2) \Phi(\hat{u}_2)  \\
  &\hspace{2em} + L(\chi(\hat{u}_2),A_1^{\operatorname{co}}, A_2^{\operatorname{co}},A_3^{\operatorname{co}}, \sigma(\hat{u}_2)) \Phi(\hat{u}_2) \\
&= (\chi(\hat{u}_1) - \chi(\hat{u}_2)) \partial_t \Phi(\hat{u}_2) + (\sigma(\hat{u}_1) - \sigma(\hat{u}_2)) \Phi(\hat{u}_2) + f
\end{align*}
and this function belongs to  $\GdomvarP{m-1} \hookrightarrow \HaPdom{m-1}$.
We further stress that $\Phi(\hat{u}_1)(0) = u_0 = \Phi(\hat{u}_2)(0)$.

As in step III), properties~\eqref{EquationPrerequisitesInZeroForAPrioriEstimates}, \eqref{EquationPrerequisitesInGmForAPrioriEstimates},
\eqref{EquationConstantInZeroFromAPrioriEstimates}, \eqref{EquationGammaFromAPrioriEstimates}, \eqref{EquationConstantFromAPrioriEstimates}, 
 \eqref{EquationGammaLargerGammam}, and~\eqref{EquationTimeStepSmallerT} 
allow us to apply Theorem~\ref{TheoremExistenceAndUniquenessOnDomain} with differential operator 
$L(\chi(\hat{u}_1),A_1^{\operatorname{co}}, A_2^{\operatorname{co}},A_3^{\operatorname{co}}, \sigma( \hat{u}_1))$ 
and parameter $\gamma$ on $J_\tau \times G$. We thus obtain the inequality
\begin{align*}
 &\GdomnormwgP{m-1}{\Phi(\hat{u}_1) - \Phi(\hat{u}_2)}^2 \leq e^{2 \gamma \tau} \GdomnormP{m-1}{\Phi(\hat{u}_1) - \Phi(\hat{u}_2)}^2 \\
 &\leq (C_{m,0} + \tau C_m) e^{(2\gamma + m C_{\ref{TheoremExistenceAndUniquenessOnDomain},1})\tau} \sum_{j=0}^{m-2} \Hhndom{m-2-j}{\partial_t^j(f - L \Phi(\hat{u}_2))(0)}^2 \\
 &\hspace{3em} + \frac{C_m}{\gamma} e^{(2\gamma + m C_{\ref{TheoremExistenceAndUniquenessOnDomain},1})\tau} \HangammaPdom{m-1}{f - L \Phi(\hat{u}_2)}^2 \\
 &= (C_{m,0} + \tau C_m) e^{(2\gamma + m C_{\ref{TheoremExistenceAndUniquenessOnDomain},1})\tau} \sum_{j=0}^{m-2}\Hhndom{m-2-j}{\partial_t^j((\chi(\hat{u}_1) - \chi(\hat{u}_2)) \partial_t \Phi(\hat{u}_2))(0) \\
 &\hspace{15em} + \partial_t^j((\sigma(\hat{u}_1) - \sigma(\hat{u}_2)) \Phi(\hat{u}_2))(0)}^2 \\
 &\hspace{2em} + \frac{C_m}{\gamma} e^{(2\gamma + m C_{\ref{TheoremExistenceAndUniquenessOnDomain},1})\tau} \HangammaPdom{m-1}{(\chi(\hat{u}_1) - \chi(\hat{u}_2)) \partial_t \Phi(\hat{u}_2) \\
 &\hspace{15em}+ (\sigma(\hat{u}_1) - \sigma(\hat{u}_2)) \Phi(\hat{u}_2)}^2.
\end{align*}
Lemma~\ref{LemmaHigherOrderChainRule} and the equalities
\begin{align*}
 \partial_t^l \hat{u}_1(0) = S_{\chi, \sigma,G, m,l}(0,f,u_0) = \partial_t^l \hat{u}_2(0)
\end{align*}
for all $l \in \{0, \ldots, m-1\}$ imply that the terms in the sum vanish.
Employing~\eqref{EquationExponentSmallerLog2}, we then deduce
\begin{align}
\label{EquationContractionFirst}
 \GdomnormwgP{m-1}{\Phi(\hat{u}_1) - \Phi(\hat{u}_2)}^2 &\leq 
 4 C_m \frac{1}{\gamma}  \HangammaPdom{m-1}{(\chi(\hat{u}_1) - \chi(\hat{u}_2)) \partial_t \Phi(\hat{u}_2)}^2 \nonumber\\
 &\hspace{1em}+ 4 C_m  \frac{1}{\gamma} \HangammaPdom{m-1}{(\sigma(\hat{u}_1) - \sigma(\hat{u}_2)) \Phi(\hat{u}_2)}^2 \nonumber\\
 &=: I_1 + I_2.
\end{align}
Before going on, we point out that we know from step II) that $\Phi(\hat{u}_2)$ is an element of $B_R(J_\tau)$ and 
hence
\begin{align}
\label{EquationEstimateForGnormOfTimeDerivative}
 \GdomnormwgP{m-1}{\partial_t \Phi(\hat{u}_2)} \leq \GdomnormwgP{m}{\Phi(\hat{u}_2)} \leq R.
\end{align}
We now treat the first summand. Lemma~2.1 of~\cite{SpitzMaxwellLinear}, 
estimate~\eqref{EquationEstimateForGnormOfTimeDerivative}, and Corollary~\ref{CorollaryEstimateForDifferenceHigherOrder}~\ref{ItemDifferenceInG}
show that
\begin{align*}
 I_1 &\leq 4 C_m \frac{1}{\gamma} \tau \GdomnormP{m-1}{(\chi(\hat{u}_1) - \chi(\hat{u}_2)) \partial_t \Phi(\hat{u}_2)}^2 \\
      &\leq 4 C_m \frac{1}{\gamma} \tau C_{P}^2 \GdomnormP{m-1}{\chi(\hat{u}_1) - \chi(\hat{u}_2)}^2 \GdomnormwgP{m-1}{\partial_t \Phi(\hat{u}_2)}^2 \\
      &\leq 4 C_m \frac{1}{\gamma} C_{P}^2  C_{\ref{CorollaryEstimateForDifferenceHigherOrder}\ref{ItemDifferenceInG}}^2(\chi,m,R,\tilde{\mathcal{U}}_\kappa) R^2 \tau \GdomnormP{m-1}{\hat{u}_1 - \hat{u}_2}^2.
\end{align*}
Exploiting~\eqref{EquationGammaAndConstants} and~\eqref{EquationTauLongTerm}, we finally arrive at
\begin{align}
 \label{EquationEstimateForContractionThirdSummand}
 I_1 &\leq \frac{1}{8} \GdomnormP{m-1}{\hat{u}_1 - \hat{u}_2}^2 \leq \frac{1}{8} \GdomnormwgP{m-1}{\hat{u}_1 - \hat{u}_2}^2.
\end{align}
Analogously, we obtain 
\begin{align}
 \label{EquationEstimateForContractionFourthSummand}
 I_2 \leq \frac{1}{8} \GdomnormwgP{m-1}{\hat{u}_1 - \hat{u}_2}^2.
\end{align}
Estimates~\eqref{EquationContractionFirst}, \eqref{EquationEstimateForContractionThirdSummand}, 
and~\eqref{EquationEstimateForContractionFourthSummand} imply
\begin{align*}
  \GdomnormwgP{m-1}{\Phi(\hat{u}_1) - \Phi(\hat{u}_2)} &\leq \frac{1}{2} \GdomnormwgP{m-1}{\hat{u}_1- \hat{u}_2}.
\end{align*}
We conclude that $\Phi$ is a strict contraction on $B_R(J_\tau)$.

V) 
Banach's fixed point theorem thus
gives a fixed point $u \in B_R(J_\tau)$, i.e., $\Phi(u) = u$. By definition of $\Phi$,
this means that $u \in \GdomP{m}$ is a solution of~\eqref{EquationNonlinearIBVP}. 
Lemma~\ref{LemmaUniquenessOfNonlinearSolution} shows that 
$u$ is the only one on $[0,\tau]$.
\end{proof}


\begin{rem}
 \label{RemarkNegativeTimes}
 \begin{enumerate}
  \item Using time reversion and adapting coefficients and data accordingly, we can transfer
  the result of Theorem~\ref{TheoremLocalExistenceNonlinear} to the negative time direction, 
  see~\cite[Remark~3.3]{SpitzDissertation} for details.
  \item Standard techniques show that the restriction and the concatenation of solutions of~\eqref{EquationIBVPIntroduction} 
  are again solutions of~\eqref{EquationIBVPIntroduction}. For the precise statements and the proofs we 
  refer to Lemma~7.13 and Lemma~7.14 in~\cite{SpitzDissertation}.
 \end{enumerate}
\end{rem}

Theorem~\ref{TheoremLocalExistenceNonlinear} and Remark~\ref{RemarkNegativeTimes} show that 
the definition of a maximal solution makes sense.
\begin{definition}
 \label{DefinitionMaximalIntervalOfExistence}
 Let $t_0 \in \R$ and $m \in \N$ with $m \geq 3$. 
 Take $\chi \in \mlpdwl{m,6}{G}{\mathcal{U}}{c}$ and $\sigma \in\mlwl{m,6}{G}{\mathcal{U}}{c}$. 
 Choose data $f \in H^m((-T,T) \times G)$, $g \in E_m((-T,T) \times G)$, and $u_0 \in \Hhdom{m}$ 
 for  all $T > 0$  and define $B$ as in 
 Theorem~\ref{TheoremLocalExistenceNonlinear}. Assume that the tuple $(\chi,\sigma, t_0, B, f,g, u_0)$ fulfills the compatibility 
 conditions~\eqref{EquationNonlinearCompatibilityConditions} of order $m$. We introduce
 \begin{align*}
  T_+(m, t_0, f,g, u_0) &= \sup \{\tau \geq t_0 \colon \exists \, G_m  \text{-solution of } \eqref{EquationNonlinearIBVP} 
      \text{ on } [t_0,  \tau]\}, \\
  T_{-}(m, t_0, f,g, u_0) &= \inf \{\tau \leq t_0 \colon \exists \, G_m  \text{-solution of } \eqref{EquationNonlinearIBVP} 
      \text{ on } [\tau, t_0]\}.
 \end{align*}
 The interval $(T_{-}(m,t_0,f,g,u_0) , T_+(m,t_0,f,g,u_0)) =: I_{max}(m,t_0,f,g,u_0)$ is called 
 the maximal interval of existence.
\end{definition}

The name maximal interval of existence is justified by the next proposition. It states that there is 
a unique solution of~\eqref{EquationNonlinearIBVP} on the maximal interval of existence 
which cannot be extended beyond this interval. This solution is also called the maximal solution in 
the following. The proof works with standard techniques, see~\cite[Proposition~7.16]{SpitzDissertation} 
for details.
\begin{prop}
 \label{PropositionMaximalExistence}
Let $t_0 \in \R$ and $m \in \N$ with $m \geq 3$. 
Take $\chi \in \mlpdwl{m,6}{G}{\mathcal{U}}{c}$ and $\sigma \in\mlwl{m,6}{G}{\mathcal{U}}{c}$.
 Choose data $f \in H^m((-T,T) \times G)$, $g \in E_m((-T,T) \times G)$, and $u_0 \in \Hhdom{m}$ 
 for  all $T > 0$ and define $B$ as in Theorem~\ref{TheoremLocalExistenceNonlinear}.
 Assume that the tuple $(\chi,\sigma, t_0, B, f,g, u_0)$ fulfills the compatibility 
 conditions~\eqref{EquationNonlinearCompatibilityConditions} of order $m$. 
 Then there exists a unique maximal solution $u \in \bigcap_{j=0}^m C^j(I_{max}, \Hhdom{m-j})$ of~\eqref{EquationNonlinearIBVP}
 on $I_{max}$ which cannot be extended beyond this interval.
 \end{prop}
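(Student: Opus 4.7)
The plan is to assemble the proposition from the three ingredients already at hand: local existence in positive and negative time direction (Theorem~\ref{TheoremLocalExistenceNonlinear} together with Remark~\ref{RemarkNegativeTimes}~(1)), uniqueness on overlapping intervals (Lemma~\ref{LemmaUniquenessOfNonlinearSolution}), and the restriction/concatenation statements from Remark~\ref{RemarkNegativeTimes}~(2). Given these building blocks, the argument is essentially bookkeeping about maximal extensions.

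First I would verify that $T_\pm(m,t_0,f,g,u_0)$ are well defined and that $I_{max}$ is a nonempty open interval containing~$t_0$. Nonemptiness of the sets in Definition~\ref{DefinitionMaximalIntervalOfExistence} follows by applying Theorem~\ref{TheoremLocalExistenceNonlinear} (with $r$ chosen large enough to accommodate the data on some bounded time window) together with Remark~\ref{RemarkNegativeTimes}~(1) for the backward direction. Openness will follow a posteriori from the same local existence theorem applied at interior points.

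Next I would construct a solution on the whole of $I_{max}$ by gluing. For every compact subinterval $[a,b]\subset I_{max}$ with $t_0\in[a,b]$, by the very definition of $T_\pm$ and Remark~\ref{RemarkNegativeTimes}~(2) one obtains a $G_m$-solution $u_{[a,b]}$ of~\eqref{EquationNonlinearIBVP} on $[a,b]$ (by restricting solutions that exist on larger intervals). Lemma~\ref{LemmaUniquenessOfNonlinearSolution} applied on $[t_0,\min(b_1,b_2)]$ and, after time reversal, on $[\max(a_1,a_2),t_0]$ implies that any two such solutions agree on the intersection of their domains. Define $u$ on $I_{max}$ by $u(t)=u_{[a,b]}(t)$ for any choice of $[a,b]\ni t$; this is unambiguous. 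Exhausting $I_{max}$ by an increasing sequence of compact intervals and using that each $u_{[a,b]}\in G_m((a,b)\times G)$, one concludes $u\in\bigcap_{j=0}^{m}C^{j}(I_{max},\Hhdom{m-j})$, since $C^{j}$-regularity with values in $\Hhdom{m-j}$ is a local property in time.

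Finally I would argue non-extendability, which is the only step where one must be careful. Suppose $T_+<\infty$ and that there existed a $G_m$-solution $\tilde u$ on $[t_0,T_+ +\varepsilon]$ for some $\varepsilon>0$. Pick any $t_1\in(T_+ -\varepsilon,T_+)$, which lies in $I_{max}$, and consider both $u(t_1)$ and $\tilde u(t_1)$; by uniqueness (Lemma~\ref{LemmaUniquenessOfNonlinearSolution}) on $[t_0,t_1]$ they coincide. Remark~\ref{RemarkNegativeTimes}~(2) then allows us to concatenate $u|_{[t_0,t_1]}$ with $\tilde u|_{[t_1,T_+ +\varepsilon]}$ into a $G_m$-solution on $[t_0,T_+ +\varepsilon]$, contradicting the definition of $T_+$. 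The symmetric argument handles $T_-$. The main (very mild) obstacle is really only to package the concatenation cleanly enough that one can glue a forward and a backward solution together at $t_0$ without losing the $C^j$-regularity at the gluing point; but this is exactly the content of Remark~\ref{RemarkNegativeTimes}~(2), and Lemma~\ref{LemmaNonlinearHigherOrderInitialValues} guarantees that the time derivatives $\partial_t^p u(t_0)=S_{\chi,\sigma,G,m,p}(t_0,f,u_0)$ match from both sides, so no regularity is lost.
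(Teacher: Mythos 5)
Your proposal is correct and follows essentially the same standard route the paper invokes (the paper merely says "The proof works with standard techniques" and delegates the details to the dissertation). The structure — local existence in both time directions, agreement on overlaps from Lemma~\ref{LemmaUniquenessOfNonlinearSolution}, gluing via Remark~\ref{RemarkNegativeTimes}~(2), and non-extendability from the supremum definition of $T_\pm$ — is exactly what one expects. Two very minor remarks: the matching of time derivatives at $t_0$ when concatenating a forward and a backward branch is really the content of identity~\eqref{EquationTimeDerivativesOfSolutionEqualSChiSigmaG} (Lemma~\ref{LemmaNonlinearHigherOrderInitialValues} only shows the $S_{\chi,\sigma,G,m,p}$ are well-defined and bounded); and the non-extendability step is already immediate from the supremum definition of $T_+$, so the concatenation with $\tilde u$ is not needed there — though it does no harm.
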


  \section{Blow-up criteria}
 \label{SectionBlowUpCriteria}
 
 We next want to characterize finite maximal existence times, i.e., the situation when 
 $T_+ < \infty$, by a blow-up condition. 
 As it is usually the case when the solution is constructed via Banach's fixed point theorem, 
 the construction allows to derive such a blow-up condition in the norm which controls the initial value. 
 In our case this is the $\Hhdom{m}$-norm.
 \begin{lem}
   \label{LemmaBlowUpCriterion}
    Let $t_0 \in \R$ and $m \in \N$ with $m \geq 3$. 
 Take $\chi \in \mlpdwl{m,6}{G}{\mathcal{U}}{c}$ and $\sigma \in \mlwl{m,6}{G}{\mathcal{U}}{c}$.
 Choose data $f \in H^m((-T,T) \times G)$, $g \in E_m((-T,T) \times G)$, and $u_0 \in \Hhdom{m}$ 
 for  all $T > 0$ and define $B$ as in Theorem~\ref{TheoremLocalExistenceNonlinear}. 
 Assume that the tuple $(\chi,\sigma, t_0, B, f,g, u_0)$ fulfills the compatibility 
 conditions~\eqref{EquationNonlinearCompatibilityConditions} of order $m$.   Let $u$ be the maximal solution of~\eqref{EquationNonlinearIBVP} on $I_{max}$ 
   provided by Proposition~\ref{PropositionMaximalExistence}. If 
   $T_+ = T_+(m, t_0, f,g, u_0) < \infty$, then one of the following blow-up properties
   \begin{enumerate}
    \item \label{ItemFirstBlowUpAlternativeConvergenceToBoundary} $\liminf_{t \nearrow T_+} \dist(\{u(t,x) \colon x \in G\}, \partial \mathcal{U}) = 0$, 
    \item \label{ItemFirstBlowUpAlternativeExplosionOfHmnorm} $\lim_{t \nearrow T_+} \Hhn{m}{u(t)} = \infty$
   \end{enumerate}
   occurs.
   The analogous result is true for $T_{-}(m, t_0, f, g, u_0)$.
  \end{lem}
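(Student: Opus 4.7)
The plan is to argue by contrapositive: assume $T_+ < \infty$ and that neither alternative \ref{ItemFirstBlowUpAlternativeConvergenceToBoundary} nor \ref{ItemFirstBlowUpAlternativeExplosionOfHmnorm} occurs, then produce a $G_m$-extension of $u$ past $T_+$, contradicting the definition of $T_+$ in Definition~\ref{DefinitionMaximalIntervalOfExistence}.

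From the negation of \ref{ItemFirstBlowUpAlternativeConvergenceToBoundary} I obtain a number $\kappa_0 > 0$ and a time $t^* \in (t_0, T_+)$ such that $\dist(\{u(t,x) \colon x \in G\}, \partial \mathcal{U}) \geq \kappa_0$ for every $t \in [t^*, T_+)$. From the negation of \ref{ItemFirstBlowUpAlternativeExplosionOfHmnorm}, there are a constant $M > 0$ and a sequence $t_n \nearrow T_+$ with $\Hhndom{m}{u(t_n)} \leq M$. Combining this with the fact that $f \in \Hadom{m}$ and $g \in \Edom{m}$ on each bounded time interval, I can find a single radius $r > 0$, depending only on $M$, $f$, $g$, and $T_+$, such that the data-bound~\eqref{EquationDataSmallerRadiusInLocalExistenceTheorem} in Theorem~\ref{TheoremLocalExistenceNonlinear} is satisfied at the initial time $t_n$ for every~$n$, with $u_0$ replaced by $u(t_n)$ and $f$, $g$ replaced by their restrictions to $(t_n, t_n + T)$ for fixed $T > T_+ - t_0$.

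The decisive observation is that the existence time $\tau$ furnished by Theorem~\ref{TheoremLocalExistenceNonlinear} depends only on $\chi, \sigma, m, T, r, \kappa_0$ and \emph{not} on the initial instant $t_n$. Hence I set $\kappa = \kappa_0/2$ and obtain a single time step $\tau_0 = \tau(\chi, \sigma, m, T, r, \kappa) > 0$. Choosing $n$ so large that $t_n > T_+ - \tau_0$ and $t_n > t^*$, I want to solve~\eqref{EquationNonlinearIBVP} on $[t_n, t_n + \tau_0]$ with initial data $u(t_n)$ and the given $f, g$. The compatibility conditions~\eqref{EquationNonlinearCompatibilityConditions} at $t_n$ hold for free: since $u$ is already a $G_m$-solution on $[t_0, T_+)$, the identity~\eqref{EquationTimeDerivativesOfSolutionEqualSChiSigmaG} at $t_n$ combined with the boundary condition $Bu = g$ (differentiated in time and evaluated at $t_n$) yields $BS_{\chi,\sigma,G,m,p}(t_n, f, u(t_n)) = \partial_t^p g(t_n)$ for $0 \leq p \leq m-1$. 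Theorem~\ref{TheoremLocalExistenceNonlinear} thus gives a solution $v \in G_m((t_n, t_n + \tau_0) \times G)$, and concatenation with $u|_{[t_0, t_n]}$ (Remark~\ref{RemarkNegativeTimes}\,(2)) produces a $G_m$-solution on $[t_0, t_n + \tau_0]$ with $t_n + \tau_0 > T_+$, contradicting the definition of $T_+$.

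The main technical point, and the only nontrivial obstacle, is to verify carefully that the constants $r$ and $\kappa$ controlling $\tau_0$ can be chosen uniformly in $n$; this amounts to checking that $\sum_{j=0}^{m-1} \Hhndom{m-1-j}{\partial_t^j f(t_n)}$ stays bounded (which follows from $f \in \Hadom{m}$ on a fixed interval containing all $t_n$ via the trace-type embedding $H^m \hookrightarrow \bigcap_j C([t_n, t_n + T], H^{m-1-j})$) and that $\Enormwgdom{m}{g}$ restricted to $(t_n, t_n + T)$ stays bounded, which is likewise immediate from $g \in \Edom{m}$ on a fixed interval. Once this uniformity is in place the contradiction is obtained. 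The statement for $T_-$ is derived in exactly the same way via the time-reversal discussed in Remark~\ref{RemarkNegativeTimes}\,(1).
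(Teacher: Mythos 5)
Your proposal is correct and follows essentially the same argument as the paper: assume neither blow-up alternative occurs, use the uniform positive distance to $\partial\mathcal{U}$ together with a sequence $t_n \nearrow T_+$ of bounded $H^m$-norm to fix a radius $r$ and a cone-distance $\kappa$ that yield a uniform time step $\tau$ via Theorem~\ref{TheoremLocalExistenceNonlinear}, note that compatibility at $t_n$ comes for free from~\eqref{EquationTimeDerivativesOfSolutionEqualSChiSigmaG}, and then restart and concatenate past $T_+$ to reach a contradiction. The small variations (using $\kappa_0/2$, introducing a threshold time $t^*$) are cosmetic and do not change the substance.
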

  \begin{proof}
   Let $T_+ < \infty$ and assume that condition~\ref{ItemFirstBlowUpAlternativeConvergenceToBoundary} does not hold.
   This means that there exists $\kappa > 0$ such that 
   \begin{equation*}
    \dist(\{u(t,x) \colon x \in G\}, \partial \mathcal{U}) > \kappa
   \end{equation*}
   for all $t \in (t_0, T_+)$.
   Assume that there exists a sequence $(t_n)_n$ converging from below to the maximal existence time
   $T_+$ such that 
   $\rho := \sup_{n \in \N} \Hhn{m}{u(t_n)}$ is finite. Fix a time $T' > T_+$ and take a radius 
   $r > \rho$ with 
   \begin{equation*}
    m \|f\|_{G_{m-1}((t_0, T') \times G)}^2 + \|g\|_{E_m((t_0,T') \times G)}^2 + \rho^2 + \|f\|_{H^m((t_0, T') \times G)}^2 < r^2. 
   \end{equation*}
   Then pick an index $N \in \N$ such that 
   \begin{align*}
      t_N + \tau(\chi,\sigma,m, T' - t_0, r, \kappa) > T_+,
   \end{align*}
   for the time step $\tau = \tau(\chi,\sigma,m,T' - t_0, r,\kappa)$ from Theorem~\ref{TheoremLocalExistenceNonlinear}.
   Identity~\eqref{EquationTimeDerivativesOfSolutionEqualSChiSigmaG} shows that
   the tuple $(\chi, \sigma, t_N,B, f,g, u(t_N))$ fulfills the compatibility 
   conditions~\eqref{EquationNonlinearCompatibilityConditions} of order $m$.
   Since the distance between $\image u(t_N)$ and $\partial \mathcal{U}$ is larger or equal than
   $\kappa$, Theorem~\ref{TheoremLocalExistenceNonlinear} thus gives a $G_m$-solution $v$
   of~\eqref{EquationNonlinearIBVP} with inhomogeneity $f$, boundary value $g$, and initial value $u(t_N)$ at $t_N$ on $[t_N, t_N + \tau]$.
   Setting $w(t) := u(t)$ if $t \in [t_0, t_N]$ and $w(t) := v(t)$ if $t \in [t_N, t_N + \tau]$, we obtain a $G_m$-solution 
   of~\eqref{EquationNonlinearIBVP} with data $f$, $g$, and $u_0$ on $[t_0, t_N + \tau]$ by Remark~\ref{RemarkNegativeTimes}.
   This contradicts the definition of $T_+$ since $t_N + \tau > T_+$.
   The assertion for $T_{-}$ is proven analogously.
  \end{proof}
  
  The blow-up criterion above can be improved. In fact we will show that if $T_+ < \infty$ (and 
  the solution does not come arbitrarily close to $\partial \mathcal{U}$), then the spatial Lipschitz norm 
  of the solution has to blow up when one approaches $T_+$, see 
  Theorem~\ref{TheoremLocalWellposednessNonlinear}~\ref{ItemBlowUp} below. There are several examples of quasilinear systems, both 
on the full space and on domains, where the blow-up condition is given in terms of the Lipschitz-norm of the 
solution, see e.g.~\cite{BahouriCheminDanchin, BealeKatoMajda, BenzoniGavage, Klainerman,  KlainermanPonce,  Majda,  MetivierEtAl}.
  This improvement (in comparison with the $\Hhdom{m}$-norm) is possible as one can exploit that 
  a solution $u$ of the nonlinear problem~\eqref{EquationNonlinearIBVP} solves the linear problem~\eqref{EquationIBVPIntroduction} 
  with coefficients $\chi(u)$ and $\sigma(u)$. Deriving estimates for the derivatives of $u$, 
  one can then use so-called Moser-type estimates, see the proof of Proposition~\ref{PropositionEstimateForNonlinearSolutionOnDomain} 
  below. These estimates, introduced in~\cite{Moser} and 
  based on the Gagliardo-Nirenberg estimates from~\cite{NirenbergOnEllipticPartialDifferentialEquations}, 
  are an efficient tool to estimate products of derivatives of $u$. However, 
  as our material laws $\chi$ and $\sigma$ do also depend on the space variable $x$, we cannot use them 
  in their standard form (see e.g. \cite{KlainermanMajda, Majda} and also~\cite{SpitzDissertation} for domains, where we treated 
  a slightly simpler case). 
  But the proof of the version below still follows the standard ideas already used in~\cite{Moser}.
  \begin{lem}
   \label{LemmaMoserTypeInequalities}
   Let $T > 0$, $J = (0,T)$, and $m \in \N$ with $m \geq 3$.
   Let $\theta \in \ml{m,1}{G}{\mathcal{U}}$ and $v \in \Gdom{m}$. Assume that there is a number $\zeta_0 > 0$ 
   and a compact subset $\mathcal{U}_1$ of $\mathcal{U}$ such that $\|v\|_{W^{1,\infty}(J \times G)} \leq \zeta_0$ 
   and $\image v \subseteq \mathcal{U}_1$. Then there is a constant $C = C(\theta, \zeta_0, \mathcal{U}_1)$ such that
   \begin{equation*}
    \|\partial^\beta \theta(u) \partial_t \partial^{\alpha - \beta} u\|_{L^2(J \times G)} 
      + \|\partial^\beta \theta(u) \partial^{\alpha - \beta} u\|_{L^2(J \times G)} \leq C \|v\|_{H^{|\alpha|}(J \times G)}
   \end{equation*}
   for all $0 < \beta \leq \alpha$ and $\alpha \in \N_0^4$ with $|\alpha| \leq m$.
  \end{lem}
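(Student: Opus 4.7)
My plan is to expand $\partial^\beta \theta(v)$ via Fa\'a di Bruno's formula from Lemma~\ref{LemmaHigherOrderChainRule}~\ref{ItemHigherOrderChainRuleInG} and reduce the claim to a Moser-type product estimate for derivatives of $v$, proved by Gagliardo--Nirenberg interpolation between $W^{1,\infty}(J \times G)$ and $H^{|\alpha|}(J \times G)$.

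Since $\image v \subseteq \mathcal{U}_1$ is compact and $\theta \in \ml{m,1}{G}{\mathcal{U}}$, every derivative of $\theta$ of order at most $m$ is uniformly bounded on $G \times \mathcal{U}_1$, so each coefficient $(\partial_y^J \partial_x^{\beta'} \theta)(v)$ appearing in~\eqref{EquationFormulaHigherOrderChainRuleInG} lies in $L^\infty(J \times G)$ with a bound depending only on $\theta$, $m$, $\mathcal{U}_1$. The problem thus reduces to controlling
\[
 \Big\|\prod_{i=1}^{j} \partial^{\gamma_i} v_{l_i} \cdot \partial_t \partial^{\alpha-\beta} v\Big\|_{L^2(J \times G)}
\]
(and the analogous product for the second term) for multi-indices with $|\gamma_i| \geq 1$, $\beta' + \sum_{i=1}^j \gamma_i = \beta$ with $\beta'_0 = 0$, and $0 \leq j \leq |\beta|$.

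Setting $\gamma_{j+1} := e_0 + (\alpha - \beta)$ turns the expression into a product of $p := j+1$ derivatives of components of $v$, each of order $|\gamma_i| \geq 1$, with total order
\[
 k := \sum_{i=1}^{p} |\gamma_i| \leq |\beta| + |\alpha - \beta| + 1 = |\alpha| + 1.
\]
The hypothesis $\beta > 0$ is crucial: it forces $|\gamma_{j+1}| = |\alpha - \beta| + 1 \leq |\alpha|$ and $|\gamma_i| \leq |\beta| \leq |\alpha|$ for $i \leq j$, so every individual derivative order is at most $|\alpha|$. When $j = 0$ the single remaining factor has order $\leq |\alpha|$ and the estimate is immediate. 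For $j \geq 1$, i.e.\ $p \geq 2$, I invoke the Gagliardo--Nirenberg inequality
\[
 \|\partial^{\gamma_i} v\|_{L^{p_i}(J \times G)} \leq C\, \|v\|_{W^{1,\infty}(J \times G)}^{1-\theta_i}\, \|v\|_{H^{|\alpha|}(J \times G)}^{\theta_i},\quad \theta_i = \frac{|\gamma_i| - 1}{|\alpha| - 1},\ p_i = \frac{2(|\alpha| - 1)}{|\gamma_i| - 1},
\]
on the Lipschitz domain $J \times G \subset \R^4$ (interpreting $|\gamma_i| = 1$ as $L^\infty$ via the $W^{1,\infty}$-bound). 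H\"older's inequality applies because
\[
 \sum_{i=1}^{p} \frac{1}{p_i} = \frac{k - p}{2(|\alpha| - 1)} \leq \frac{(|\alpha| + 1) - 2}{2(|\alpha| - 1)} = \frac12,
\]
and yields $\|\prod \partial^{\gamma_i} v\|_{L^2} \leq C \zeta_0^{p - \sum \theta_i} \|v\|_{H^{|\alpha|}}^{\sum \theta_i}$ with $\sum \theta_i \leq 1$; bounding $\|v\|_{H^{|\alpha|}}^{\sum \theta_i} \leq 1 + \|v\|_{H^{|\alpha|}}$ then closes the argument (the unit being absorbed into the constant in the usual Moser convention). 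The degenerate case $|\alpha| = 1$ is trivial, as then every derivative of $v$ that appears is of first order and directly in $L^\infty$ by the $W^{1,\infty}$-bound.

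The main obstacle is that the total derivative order in the product can be as large as $|\alpha| + 1$, one above the available Sobolev regularity. This is exactly what the $W^{1,\infty}$-hypothesis is designed to absorb: it allows one unit of derivative to be paid for in $L^\infty$ rather than $L^2$, and the Gagliardo--Nirenberg/H\"older combination above makes this precise by forcing the interpolation exponent $\sum \theta_i \leq 1$. The second term $\|\partial^\beta \theta(v) \partial^{\alpha-\beta} v\|_{L^2}$ is actually easier, since $\partial^{\alpha-\beta} v$ has order $|\alpha - \beta| \leq |\alpha| - 1$, making the total order at most $|\alpha|$; in the special case $\alpha = \beta$ the trailing factor is $v$ itself, directly absorbed by the $W^{1,\infty}$-bound.
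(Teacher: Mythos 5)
Your overall plan is the intended one: the paper refers the reader to Moser's technique, and expanding $\partial^\beta\theta(v)$ via the Fa\`a di Bruno formula of Lemma~\ref{LemmaHigherOrderChainRule} and then applying a Gagliardo--Nirenberg/H\"older argument is exactly that technique. (Minor point: the statement has a typo, $\theta(u)$ should be $\theta(v)$; you correctly work with $v$.)

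However, your exponent bookkeeping does not actually close, and the patch you suggest is not valid. Two concrete problems. First, H\"older with $\sum_i 1/p_i \le 1/2$ places the product $\prod_i \partial^{\gamma_i}v$ in $L^q(J\times G)$ with $1/q = \sum_i 1/p_i$, and when the inequality is strict this gives $q>2$. Since $G$ is allowed to be $\R^3_+$ (hence $J\times G$ has infinite measure), $L^q(J\times G)\not\subseteq L^2(J\times G)$, so you have not obtained the required $L^2$ bound. The extreme instance is $|\gamma_i|=1$ for every $i$: with your choice every $p_i=\infty$, you only get an $L^\infty$ bound, and there is no $L^2$ control at all. The same issue resurfaces in your "degenerate case $|\alpha|=1$", where you conclude the factor is "directly in $L^\infty$" when what is needed is $L^2$, and in your treatment of the second term, where $\sum 1/p_i < 1/2$ is even further from $1/2$. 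Second, even granting the $L^2$ bound, you end with $\zeta_0^{\,p-\sum\theta_i}\|v\|_{H^{|\alpha|}}^{\sum\theta_i}$ with $\sum\theta_i$ typically strictly less than $1$; this is \emph{not} $\lesssim \|v\|_{H^{|\alpha|}}$, and the remark that "the unit [is] absorbed into the constant" is incorrect -- for small $\|v\|_{H^{|\alpha|}}$ the inequality $\|v\|_{H^{|\alpha|}}^{\theta}\le C\|v\|_{H^{|\alpha|}}$ fails.

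Both defects are cured by the same standard adjustment, which is part of the Moser argument you cite but which you skipped: Gagliardo--Nirenberg for $\partial^{\gamma_i}v = \partial^{\gamma_i - e}(\partial_e v)$ allows any interpolation exponent $a_i$ in the interval $[\tfrac{|\gamma_i|-1}{|\alpha|-1},\,1]$ (not just the left endpoint $\theta_i$), with $\|\partial^{\gamma_i}v\|_{L^{2/a_i}} \lesssim \|v\|_{W^{1,\infty}}^{1-a_i}\|v\|_{H^{|\alpha|}}^{a_i}$. Since $\sum_i \theta_i = \tfrac{k-p}{|\alpha|-1} \le 1 \le p$, you can choose $a_i\in[\theta_i,1]$ with $\sum_i a_i = 1$ exactly. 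Then H\"older gives $\sum_i 1/p_i = \tfrac12$ and the product lands in $L^2$ with bound $\lesssim \zeta_0^{\,p-1}\|v\|_{H^{|\alpha|}}$, which is precisely the claimed form. Once this adjustment is inserted, your argument goes through.
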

  We will employ this lemma in the proof of the next proposition. There it has to be combined with 
  a technique developed in~\cite{SpitzMaxwellLinear} to control the derivatives in normal direction 
  of solutions 
  of~\eqref{EquationNonlinearIBVP} although this system has a characteristic boundary. For later 
  reference, we recall the key result in this direction. It is a simplified version 
  of~\cite[Proposition~3.3 and Remark~4.11]{SpitzMaxwellLinear} and relies heavily on the structure of the Maxwell 
  equations.
  \begin{lem}
\label{LemmaCentralEstimateInNormalDirection}
Let $T' > 0$, $\eta > 0$, $\gamma \geq 1$, and $r \geq r_0 >0$. Pick $T \in (0, T')$ and set $J = (0,T)$.
Take $A_0 \in \Fupdwl{0}{\operatorname{c}}{\eta}$, $A_1, A_2 \in \Fcoeff{0}{\operatorname{cp}}$, $A_3 = A_3^{\operatorname{co}}$,
and $D \in \Fuwl{0}{\operatorname{c}}$ with 
\begin{align*}
	&\|A_i\|_{W^{1,\infty}(\Omega)} \leq r, \quad \|D\|_{W^{1,\infty}(\Omega)} \leq r, \\
	&\|A_i(0)\|_{L^\infty(\R^3_+)} \leq r_0,  \quad \|D(0)\|_{L^\infty(\R^3_+)} \leq r_0
\end{align*}
for all $i \in \{0, \ldots, 2\}$. 
Choose $f \in \Ha{1}$ and $u_0 \in \Hh{1}$.
Let $u \in \G{1}$ solve the initial value problem
 \begin{equation}
  \label{IVP}
\left\{\begin{aligned}
   A_0 \partial_t u + \sum_{j=1}^3 A_j \partial_j u + D u  &= f, \quad &&x \in \R^3_+, \quad &t \in J; \\
   u(0) &= u_0, \quad &&x \in \R^3_+.
\end{aligned}\right.
\end{equation}
Then there are constants $C_{1,0} = C_{1,0}(\eta,r_0) \geq 1$ and $C_1 = C_1(\eta, r, T') \geq 1$ 
such that
\begin{align}
 \label{EquationFirstOrderFinalVariant}
    \Gnorm{0}{\nabla u}^2 &\leq e^{C_1 T}  \Big((C_{1,0} + T C_1)\Big(\sum_{j = 0}^2 \Ltwohnt{\partial_j u}^2  
      +   \Ltwohn{f(0)}^2 + \Hhn{1}{u_0}^2 \! \Big)\nonumber \\
    & \hspace{4em}  + \frac{C_1}{\gamma} \Hangamma{1}{f}^2   \Big).
\end{align}
\end{lem}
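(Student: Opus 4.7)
The plan is to reduce the estimate to controlling the normal derivative $\partial_3 u$ alone, since the $L^2_\gamma$-norms of the tangential derivatives $\partial_j u$ for $j \in \{0,1,2\}$ already appear on the right-hand side of~\eqref{EquationFirstOrderFinalVariant}. Writing $u = (E, H)$ with $E, H$ three-dimensional, the central obstruction is that on the half-space $\R^3_+$ the boundary matrix $A_3 = A_3^{\operatorname{co}}$ is singular: its rows corresponding to $E_3$ and $H_3$ vanish identically, so $\partial_3 E_3$ and $\partial_3 H_3$ cannot be recovered algebraically from the equation.

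First I would rewrite the evolution equation as
\begin{equation*}
A_3^{\operatorname{co}} \partial_3 u = f - Du - A_0 \partial_t u - A_1 \partial_1 u - A_2 \partial_2 u
\end{equation*}
and read off the four components $\partial_3 E_1, \partial_3 E_2, \partial_3 H_1, \partial_3 H_2$ directly. The pointwise bounds on $A_0, A_1, A_2, D$ (bounded by $r_0$ at time zero, by $r$ globally) combined with the $L^2_\gamma$-control on $\partial_j u$ for $j \in \{0,1,2\}$ and on $f$ then yield the desired bound on these four entries of $\nabla u$, with constants of the correct form $C_{1,0}(\eta,r_0)$ and $C_1(\eta,r,T')$.

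The hard part is the remaining pair $\partial_3 E_3, \partial_3 H_3$. Here the structural assumption $A_1, A_2 \in \Fcoeff{0}{cp}$ is essential: it furnishes a decomposition $A_j = \sum_{k=1}^3 A_k^{\operatorname{co}} \mu_{j,k}$ with time-independent scalar $\mu_{j,k}$, from which auxiliary divergence-type evolution equations for $\div E$ and $\div H$ can be derived (applying $\div$ to each half of the system and using $\div\circ\curl = 0$, up to commutator terms involving $\nabla \mu_{j,k}$). A standard $L^2_\gamma$-energy estimate on these equations, together with the identities $\partial_3 E_3 = \div E - \partial_1 E_1 - \partial_2 E_2$ and its magnetic counterpart, converts into the required control on the outstanding two normal derivatives in terms of quantities already estimated.

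The principal obstacle will be the bookkeeping of constants: one must arrange the argument so that initial-time data feed into the $C_{1,0}(\eta,r_0)$ factor, global coefficient size and the time interval feed into $(1+TC_1)e^{C_1 T}$, and lower-order forcing terms are absorbed into the $\gamma^{-1}\Hangamma{1}{f}^2$ piece by choosing $\gamma$ sufficiently large, in the spirit of the hyperbolic $L^2$-theory of~\cite{Eller}. Since the lemma is stated as a simplified packaging of \cite[Proposition~3.3 and Remark~4.11]{SpitzMaxwellLinear}, the cleanest presentation would specialize the hypotheses of that more general result and invoke it directly, rather than redoing the divergence manipulations here.
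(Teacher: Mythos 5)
The paper does not prove this lemma; it is recalled as a simplified packaging of Proposition~3.3 and Remark~4.11 of \cite{SpitzMaxwellLinear}, so your closing recommendation to invoke the companion paper directly is exactly what the paper does, and for this article that is sufficient.

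Your optional reconstruction of the underlying argument correctly identifies both structural pillars --- algebraic recovery of $\partial_3 E_1, \partial_3 E_2, \partial_3 H_1, \partial_3 H_2$ from the nonzero rows of $A_3^{\operatorname{co}}\partial_3 u$, and a divergence-type argument for the remaining pair $\partial_3 E_3, \partial_3 H_3$ --- but glosses over the central technical obstruction. The coefficient $A_0$ is neither the identity, nor constant, nor block-diagonal in the $(E,H)$-splitting, so applying $\div$ to the first three rows of $A_0 \partial_t u + \sum_j A_j \partial_j u + Du = f$ does \emph{not} produce a transport equation for $\div E$: the divergence of the first three components of $A_0 \partial_t u$ is not $\partial_t \div E$. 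One has to first absorb $A_0$ into the time derivative by writing $A_0 \partial_t u = \partial_t(A_0 u) - (\partial_t A_0)u$, work with the divergences of the first three and last three components of $A_0 u$, and then use the positivity $A_0 \geq \eta$ to invert the $2\times 2$ submatrix of $A_0$ acting on the components $u_3$ and $u_6$, since $A_0$ couples $\partial_3 E_3$ and $\partial_3 H_3$. The clean identity $\partial_3 E_3 = \div E - \partial_1 E_1 - \partial_2 E_2$ you quote is therefore not the quantity that actually closes the estimate, and the resulting divergence equations still carry tangential and normal derivatives of $u$ on their right-hand sides, so the bound closes only through a Gronwall-type absorption together with the algebraically recovered components. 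These are the manipulations that \cite{SpitzMaxwellLinear} carries out and that the present paper delegates to it.
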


  
  We can now prove the main result of this section, showing that we control the 
  $\Hhdom{m}$-norm of a solution as soon as we control its spatial Lipschitz-norm. For the proof we 
  differentiate~\eqref{EquationNonlinearIBVP} and employ the basic $L^2$-estimate 
  from Theorem~\ref{TheoremExistenceAndUniquenessOnDomain}
  to the derivative of $u$. 
  The Moser-type estimate from Lemma~\ref{LemmaMoserTypeInequalities} allow us to treat the arising 
  inhomogeneities in such a way that the Gronwall lemma yields the desired estimate. However, 
  this approach only works in tangential directions. In order to bound the derivatives of $u$ 
  containing a derivative in normal direction, we have to combine the above approach with 
  Lemma~\ref{LemmaCentralEstimateInNormalDirection}.
  \begin{prop}
	\label{PropositionEstimateForNonlinearSolutionOnDomain}
	Let $m \in \N$ with $m \geq 3$ and $t_0 \in \R$. Take functions 
	$\chi \in \mlpdwl{m,6}{G}{\mathcal{U}}{c}$ and $\sigma \in \mlwl{m,6}{G}{\mathcal{U}}{c}$. Let
   \begin{align*}
   	B(x) = \begin{pmatrix}
             0 &\nu_3(x) &-\nu_2(x) &0 &0 &0 \\
      -\nu_3(x) &0 &\nu_1(x) &0 &0 &0 \\
      \nu_2(x) &-\nu_1(x) &0 &0 &0 &0 
            \end{pmatrix}
   \end{align*}
   for all $x \in \partial G$, where $\nu$ denotes the unit outer normal vector of $\partial G$.
    Choose data $u_0 \in \Hhdom{m}$, $g \in E_m((-T,T) \times \partial G)$, 
   and $f \in H^m((-T,T) \times G)$ 
   for all $T > 0$ such that the tuple $(\chi,\sigma,t_0, B, f,g,u_0)$ fulfills the compatibility 
   conditions~\eqref{EquationNonlinearCompatibilityConditions}
   of order $m$.
   Let $u$ denote the maximal solution of~\eqref{EquationNonlinearIBVP}
    provided by Proposition~\ref{PropositionMaximalExistence} on $(T_{-},T_+)$. We introduce the quantity 
    \begin{align*}
     \omega(T) = \sup_{t \in (t_0,T)} \| u(t) \|_{W^{1,\infty}(G)}
    \end{align*}
    for every $T \in (t_0,T_+)$. We further take $r > 0$ with 
    \begin{align*}
     &\sum_{j=0}^{m-1} \Hhndom{m-j-1}{\partial_t^j f(t_0)} \! + \!  \|g\|_{E_m((t_0, T_+) \times \partial G)} \! + \! \Hhndom{m}{u_0} \! + \! \|f\|_{H^m((t_0, T_+) \times G)} \leq r.
    \end{align*}
    We set $T^* = T_+$ if $T_+ < \infty$ and take any $T^* > t_0$ if $T_+ = \infty$. Let $\omega_0 > 0$ and let 
    $\mathcal{U}_1$ be a compact subset of $\mathcal{U}$.

    Then there exists a constant $C = C(\chi,\sigma,m,r,\omega_0,\mathcal{U}_1,T^* - t_0)$ such that
    \begin{align*}
     \|u\|_{G_m((t_0,T)\times G)}^2 &\leq C \Big( \sum_{j=0}^{m-1} \Hhndom{m-1-j}{\partial_t^j f(t_0)}^2  + \Hhndom{m}{u_0}^2 + \|g\|_{E_m((t_0, T) \times \partial G)}^2 \\
     &\hspace{4em} + \|f\|_{H^m((t_0, T) \times G)}^2 \Big)
    \end{align*}
    for all $T \in (t_0,T^*)$ which have the property that  $\omega(T) \leq \omega_0$ and $\image u(t) \subseteq \mathcal{U}_1$ for all $t \in [t_0,T]$. The analogous result is true on $(T_{-},t_0)$.
\end{prop}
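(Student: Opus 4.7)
The plan is to view $u$ as a solution of the linear problem~\eqref{EquationIBVPIntroduction} with the frozen coefficients $A_0 = \chi(u)$, $D = \sigma(u)$, to apply the $L^2$-level a~priori estimate of Theorem~\ref{TheoremExistenceAndUniquenessOnDomain} to suitably differentiated versions of $u$, to bound the arising commutators via Lemma~\ref{LemmaMoserTypeInequalities}, to recover the missing normal derivatives through the Maxwell-specific estimate of Lemma~\ref{LemmaCentralEstimateInNormalDirection}, and to close the argument by choosing the time weight $\gamma$ large enough. Without loss of generality $t_0 = 0$; after the boundary charts from~\cite{SpitzMaxwellLinear} it suffices to consider the half-space situation where $B = B^{\operatorname{co}}$ is constant and the first order coefficients are the constant matrices $A_j^{\operatorname{co}}$. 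The assumption $\omega(T) \leq \omega_0$ together with $\image u(t) \subseteq \mathcal{U}_1$ and Lemma~\ref{LemmaHigherOrderChainRule} provides uniform $W^{1,\infty}(J_T \times G)$-bounds for $\chi(u)$ and $\sigma(u)$ depending only on $\chi, \sigma, \omega_0, \mathcal{U}_1$, as well as a lower bound $\chi(u) \geq \eta(\chi, \mathcal{U}_1) > 0$; here $J_T = (0, T)$.

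For the tangential part I would fix $\alpha \in \N_0^4$ with $\alpha_3 = 0$ and $|\alpha| \leq m$ and apply $\partial^\alpha$ to the evolution equation. The function $v := \partial^\alpha u$ then satisfies
\[
\chi(u) \partial_t v + \sum_{j=1}^3 A_j^{\operatorname{co}} \partial_j v + \sigma(u) v = \partial^\alpha f - R_\alpha(u),
\]
with $R_\alpha(u) := \sum_{0 < \beta \leq \alpha} \binom{\alpha}{\beta} \bigl(\partial^\beta \chi(u) \, \partial_t \partial^{\alpha - \beta} u + \partial^\beta \sigma(u) \, \partial^{\alpha - \beta} u\bigr)$, with boundary condition $B v = \partial^\alpha g$ (legitimate because $\alpha$ is tangential and $B$ is constant), and with initial datum obtained by applying the tangential spatial derivatives of total order $|\alpha| - \alpha_0$ to $\partial_t^{\alpha_0} u(0) = S_{\chi,\sigma,G,m,\alpha_0}(0,f,u_0)$, whose norm in $\Ltwohdom$ is bounded by the data through Lemma~\ref{LemmaNonlinearHigherOrderInitialValues}. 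Applying Theorem~\ref{TheoremExistenceAndUniquenessOnDomain} at level $0$ to $v$ and estimating $\|R_\alpha(u)\|_{L^2_\gamma(J_T \times G)}$ via Lemma~\ref{LemmaMoserTypeInequalities} gives
\[
\sum_{\alpha_3 = 0,\, |\alpha| \leq m} \|\partial^\alpha u\|_{L^2_\gamma(J_T \times G)}^2 \leq C \, \mathrm{Data}(T)^2 + \frac{C}{\gamma} \|u\|_{H^m_\gamma(J_T \times G)}^2,
\]
where $\mathrm{Data}(T)$ denotes the right-hand side of the claimed estimate and $C = C(\chi,\sigma,m,r,\omega_0,\mathcal{U}_1,T^*)$.

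To recover the derivatives containing $\partial_3$, I would argue by induction on the number $k$ of normal derivatives appearing in $\alpha$; the base case $k = 0$ is the previous paragraph. For $\alpha$ with $\alpha_3 = k + 1$ and $|\alpha| \leq m$ I set $w := \partial^{\alpha - e_3} u$. Differentiating the equation, $w$ solves an initial value problem of the form~\eqref{IVP} with right-hand side $\partial^{\alpha - e_3} f$ plus commutator terms, all controlled in $L^2_\gamma$ by Lemma~\ref{LemmaMoserTypeInequalities}. Lemma~\ref{LemmaCentralEstimateInNormalDirection} then bounds $\|\nabla w\|_{L^2_\gamma}$, and in particular $\|\partial^\alpha u\|_{L^2_\gamma} = \|\partial_3 w\|_{L^2_\gamma}$, in terms of the $L^2_\gamma$-norms of $\partial_t w, \partial_1 w, \partial_2 w$ -- each of which contains at most $k$ normal derivatives and is therefore already controlled -- plus $\mathrm{Data}(T)$. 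Iterating up to $k = m - 1$ and summing yields
\[
\|u\|_{G_m(J_T \times G)}^2 \leq C \, \mathrm{Data}(T)^2 + \frac{C}{\gamma} \|u\|_{H^m_\gamma(J_T \times G)}^2,
\]
and the elementary bound $\|u\|_{H^m_\gamma(J_T \times G)}^2 \leq T^* \|u\|_{G_m(J_T \times G)}^2$ lets me absorb the second term into the left-hand side by choosing $\gamma$ large (depending only on the admissible quantities), after which passing from $\gamma$-weighted to unweighted norms on $J_T$ only multiplies constants by $e^{2\gamma T^*}$. The statement on $(T_{-}, t_0)$ then follows by time reversal as in Remark~\ref{RemarkNegativeTimes}.

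The main obstacle is the characteristic boundary: a direct application of the higher order linear estimate of Theorem~\ref{TheoremExistenceAndUniquenessOnDomain} to $\partial^\alpha u$ is impossible as soon as $\alpha_3 > 0$, because $\partial^\alpha$ no longer commutes with the boundary operator $B$ and because a loss of normal regularity generally occurs for characteristic systems. The structural Lemma~\ref{LemmaCentralEstimateInNormalDirection}, tailored to the Maxwell system through the decomposition~\eqref{EquationDecompositionOfA3co}, is precisely what bypasses this loss; the companion difficulty is to run the Moser-type bounds so that the $W^{1,\infty}$-norm of $u$, and not the $H^m$-norm of $u$, appears on the ``bad'' low-order factors of the commutators $R_\alpha(u)$, which is exactly what guarantees that the remaining $\|u\|_{H^m_\gamma}^2$-term comes with the absorbable prefactor $1/\gamma$.
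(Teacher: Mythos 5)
Your outline is right in its main ingredients (localization to the half-space, differentiation of the equation, Moser-type bounds for the commutators $R_\alpha(u)$, the structural Lemma~\ref{LemmaCentralEstimateInNormalDirection} to recover normal derivatives), but the closing step is flawed and cannot be repaired by ``choosing $\gamma$ large.'' The mechanism you propose is to end with an inequality of the shape
\[
\|u\|_{G_m(J_T \times G)}^2 \leq C\,\mathrm{Data}^2 + \frac{C}{\gamma}\,\|u\|_{H^m_\gamma(J_T \times G)}^2
\]
and absorb the last term via $\|u\|_{H^m_\gamma}^2 \lesssim T^*\|u\|_{G_m}^2$. The problem is that the a~priori estimate of Theorem~\ref{TheoremExistenceAndUniquenessOnDomain} controls the \emph{$\gamma$-weighted} $G$-norm, so the unweighted $G_m$-norm you write on the left already carries a hidden factor $e^{2\gamma T}$. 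Once this is made explicit, the term you would like to absorb has prefactor $C T^* e^{2\gamma T}/\gamma$, which blows up as $\gamma\to\infty$; no choice of $\gamma$ produces a contraction. If instead you keep everything in $\gamma$-weighted norms, you would need a \emph{weighted} Moser estimate $\|R_\alpha(u)\|_{L^2_\gamma} \leq C \|u\|_{H^m_\gamma}$. But Lemma~\ref{LemmaMoserTypeInequalities} is an unweighted space-time estimate (and, because it involves mixed space-time derivatives, it is not a pointwise-in-$t$ bound that one could trivially multiply by $e^{-\gamma t}$). Inserting the weight there is not automatic; in general the passage from $\|R_\alpha(u)\|_{L^2_\gamma}\leq \|R_\alpha(u)\|_{L^2}\leq C\|u\|_{H^m}$ back to weighted norms again costs $e^{\gamma T}$, reproducing the same obstruction.

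The paper closes the argument by a completely different mechanism. It \emph{fixes} $\gamma=\gamma_0(\chi,\sigma,m,r,\omega_0,\mathcal{U}_1,T^*)$ once and for all, uses the crude bound $\|f_\alpha\|_{L^2_{\gamma_0}}\leq\|f_\alpha\|_{L^2}$, and then exploits that the Moser estimate~\eqref{EquationFinalEstimateForFalpha} produces the term $\|u\|_{H^{|\alpha|}(J_T\times G)}^2 = \sum_{|\beta|\le |\alpha|}\int_0^T\|\partial^\beta u(s)\|_{L^2(G)}^2\,ds$ as a \emph{time integral}. Running a double induction (outer on $k=|\alpha|$ with the lower-order contributions handled by the induction hypothesis, inner on $\alpha_3$ via Lemma~\ref{LemmaCentralEstimateInNormalDirection}), one arrives at~\eqref{EquationHypothesisForMultiindexOfAbsoluteValuek}, where the only uncontrolled term is $\int_0^T \sum_{|\beta|=k}\|\partial^\beta u(s)\|^2\,ds$. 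Summing over $|\alpha|=k$ then puts the estimate in exactly the form that Gronwall's inequality can close, and the result~\eqref{EquationEstimateAfterGronwall} follows. You should therefore replace your $\gamma\to\infty$ absorption by Gronwall with fixed $\gamma_0$, and keep track of which derivatives of $u$ appear at the current order $k$ (Gronwall term) versus lower order (already estimated by induction); this is precisely the role of the double induction that your proposal omits.
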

\begin{proof}
   Without loss of generality we assume $t_0 = 0$ and that $\chi$ and $\sigma$ have 
   property~\eqref{EquationPropertyForL2}, cf.~Remark~\ref{RemarkChiuInFm}.
   Let $\omega_0 > 0$ and $\mathcal{U}_1$ be a compact subset of $\mathcal{U}$. If $\omega(T) > \omega_0$ or if 
   the set $\{u(t,x) \colon (t,x) \in [t_0,T] \times G\}$ is not contained in $\mathcal{U}_1$ for all $T \in (0, T^*)$, 
   there is nothing to prove. Otherwise we fix $T' \in (0,T^*)$ with $\omega(T') \leq \omega_0$ and $\image u(t) \subseteq \mathcal{U}_1$ 
   for all $t \in [t_0,T']$.
   Let $T \in (0,T']$ be arbitrary
   and denote $(0,T) \times \R^3_+$ 
   by $\Omega$. Note that $\omega(T) \leq \omega(T') \leq \omega_0$ and $\image u(t) \subseteq \mathcal{U}_1$ for all 
   $t \in [t_0, T]$.
   
   To derive the improved estimates, we have to study the problem on the half-space. To that 
   purpose, we apply the localization procedure already used in the proof of
   Theorem~\ref{TheoremExistenceAndUniquenessOnDomain}, 
   see~\cite[Section~2]{SpitzMaxwellLinear} and~\cite[Chapter~5]{SpitzDissertation}. To streamline 
   the argument, we do not perform the localization here but assume that $G = \R^3_+$ and that 
   we have spatial coefficients $A_1, A_2 \in \Fcoeff{m}{cp}$ and $A_3 = A_3^{\operatorname{co}}$. 
   The full space case is easier and treated similarly. 
   We refer to~\cite[page~9]{SpitzMaxwellLinear} and~\cite[Proposition~7.20]{SpitzDissertation} for the details.
   
    We pick a number $\eta = \eta(\chi) > 0$ such that $\chi \geq \eta$. Consequently, 
    there is a constant $C$ with $|\chi(\xi)^{-1}| \leq C \eta^{-1}$
    for all $\xi \in \R^6$. Since the function $u$ solves~\eqref{EquationNonlinearIBVP}, we infer
    \begin{align}
    \label{EquationEstimatingLipschitzNormAgainstOmega}
     \|\partial_t u\|_{L^\infty(\Omega)} &\leq \|\chi(u)^{-1} f\|_{L^\infty(\Omega)} + \! \sum_{j = 1}^3 \|\chi(u)^{-1} A_j \partial_j u\|_{L^\infty(\Omega)} 
	+ \|\chi(u)^{-1} \sigma(u) u\|_{L^\infty(\Omega)} \nonumber\\
	&\leq C(\eta,\sigma,\mathcal{U}_1) \Big( \Han{m}{f} + 3 \,\omega(T) + \omega(T)  \Big), \nonumber\\
    \|u\|_{W^{1,\infty}(\Omega)} &\leq \|\partial_t u\|_{L^\infty(\Omega)} +  \omega(T)
	\leq C_{\ref{EquationEstimatingLipschitzNormAgainstOmega}}(\chi,\sigma,r,\omega_0, \mathcal{U}_1).
    \end{align}
    In the following we will frequently apply~\eqref{EquationEstimatingLipschitzNormAgainstOmega} 
    without further reference.
    
    I) We set
    \begin{align}
     \label{EquationDefinitionfalphaigalpha}
      f_\alpha &= \partial^\alpha f - \sum_{0 < \beta \leq \alpha} \binom{\alpha}{\beta} \partial^\beta \chi(u)  \partial_t \partial^{\alpha - \beta} u - \sum_{j = 1}^2 \sum_{0 < \beta \leq \alpha} \binom{\alpha}{\beta} \partial^\beta A_j \partial_j \partial^{\alpha - \beta} u  \nonumber \\
     &\quad - \sum_{0 <\beta \leq \alpha} \binom{\alpha}{\beta} \partial^\beta \sigma(u) \partial^{\alpha - \beta} u 
    \end{align}
    for all $\alpha \in \N_0^4$ with $|\alpha| \leq m$. As $u$ is a solution of~\eqref{EquationNonlinearIBVP}, 
    the function
    $\partial^\alpha  u$ solves the linear initial value problem
    \begin{equation}
       \label{EquationIVPForDerivative}
       \left\{
      \begin{aligned}
	 &L(\chi(u),A_1, A_2, A_3, \sigma(u)) v = f_{\alpha}, \quad &&x \in \R^3_+, \quad &&t \in (0,T); \\
	 &v(0) = \partial^{(0,\alpha_1,\alpha_2,\alpha_3)} S_{\chi,\sigma, \R^3_+, m, \alpha_0}(0,f,u_0), \quad &&x \in \R^3_+;
      \end{aligned} \right.
    \end{equation}
    for all $\alpha \in \N_0^4$ with $|\alpha| \leq m$. Moreover, if additionally $\alpha_3 = 0$, it
    solves 
    \begin{equation}
       \label{EquationIBVPForDerivative}
       \left\{
      \begin{aligned}
	 &L(\chi(u),A_1, A_2, A_3, \sigma(u)) v = f_{\alpha}, \quad &&x \in \R^3_+, \quad &&t \in (0,T); \\
	 &B v = \partial^\alpha g, \quad &&x \in \partial \R^3_+, \quad &&t \in (0,T); \\
	 &v(0) = \partial^{(0,\alpha_1,\alpha_2,0)} S_{\chi,\sigma, \R^3_+, m, \alpha_0}(0,f,u_0), \quad &&x \in \R^3_+.
      \end{aligned} \right.
    \end{equation}
    Here we used that $\partial_t^j u(0) = S_{\chi,\sigma, \R^3_+, m, j}(0,f,u_0)$
    for all $j \in \{0, \ldots, m\}$ by~\eqref{EquationTimeDerivativesOfSolutionEqualSChiSigmaG}.
    
    In view of~\eqref{EquationIBVPForDerivative} respectively~\eqref{EquationIVPForDerivative}, we want to apply 
    Theorem~\ref{TheoremExistenceAndUniquenessOnDomain} respectively Lemma~\ref{LemmaCentralEstimateInNormalDirection} 
    to $\partial^\alpha u$. We thus need estimates for $\Ltwoanwgamma{f_\alpha}$ for all $\alpha \in \N_0^4$ 
    with $|\alpha| \leq m$,  $\Han{1}{f_\alpha}$ for all $\alpha \in \N_0^4$ with $|\alpha| \leq m-1$, and 
    $\Ltwohn{f_{\alpha}(0)}$ for all $\alpha \in \N_0^4$ with $|\alpha| \leq m-1$. We start with the estimate for $\Ltwoanwgamma{f_\alpha}$. 
    Take $\alpha \in \N_0^4$ with $|\alpha| \leq m$. Let $\beta \in \N_0^4$ with 
    $0 < \beta \leq \alpha$. Lemma~\ref{LemmaMoserTypeInequalities} then implies that
    \begin{align}
     \label{EquationFinalEstimateForFalpha}
     \Ltwoanwgamma{f_\alpha} &\leq  \Han{|\alpha|}{f}  + C(\chi,\sigma,r,\omega_0,\mathcal{U}_1) \Han{|\alpha|}{u}.
    \end{align}
    
    Next, we want to estimate $\Han{1}{f_\alpha}$ for $\alpha \in \N_0^4$ with $|\alpha| \leq m-1$. So 
    fix such a multi-index. Let $k \in \{0,\ldots,3\}$ and set 
    $\alpha^k = \alpha + e_k$. A straightforward computation, see e.g.~(3.6) in~\cite{SpitzDissertation}, shows the formula 
    \begin{equation}
    \label{EquationfalphakpartialkfalphaMinusErrorTerms}
      f_{\alpha^k} = \partial_k f_\alpha -  \partial_k \chi(u)  \partial_t \partial^{\alpha} u - \sum_{j = 1}^2 \partial_k A_j \partial_j \partial^\alpha u - \partial_k \sigma(u) \partial^{\alpha} u.
    \end{equation}
    Combined with~\eqref{EquationFinalEstimateForFalpha}, Lemma~\ref{LemmaMoserTypeInequalities} 
    now yields the inequality
    \begin{align}
     \label{EquationEstimateForH1NormFalpha}
     \Han{1}{f_\alpha} &\leq C(\chi,\sigma,r,\omega_0,\mathcal{U}_1) \Big(\Han{|\alpha| + 1}{f}  +  \Han{|\alpha|+1}{u}\Big).
    \end{align}
    
    It remains to estimate $\Ltwohn{f_\alpha(0)}$ for $\alpha \in \N_0^4$ with $|\alpha| \leq m-1$. To that purpose 
    we first insert $t = 0$ in the 
    definition of $f_\alpha$ in~\eqref{EquationDefinitionfalphaigalpha}. The product estimates from Lemma~2.1 in~\cite{SpitzMaxwellLinear}, 
    the fact that $\partial_t^j u(0) = S_{\chi,\sigma,\R^3_+,m,j}(0,f,u_0)$ for all $j \in \{0, \ldots, m\}$ 
    by~\eqref{EquationTimeDerivativesOfSolutionEqualSChiSigmaG}, and Lemma~\ref{LemmaNonlinearHigherOrderInitialValues} 
    then lead to the bound
    \begin{align}
    \label{EquationEstimateForFalphaInZero}
     &\Ltwohn{f_{\alpha}(0)} \leq  C(\chi,\sigma,m,r,\omega_0, \mathcal{U}_1) \Big(\sum_{l = 0}^{|\alpha|} \Hhn{|\alpha|-l}{\partial_t^l f(0)} + \Hhn{|\alpha|+1}{u_0}\Big),
    \end{align}
    see Proposition~7.20 in~\cite{SpitzDissertation} for the details.
    
    II) We will show inductively that there are constants 
    $C_{k} \!= \!C_k(\chi,\sigma,m,r,\omega_0,\mathcal{U}_1,\!T^*\!)$ such that 
    \begin{align}
    \label{EquationEstimateDerivativeForInductionStep}
     \Gnormwg{0}{\partial^\alpha u}^2 &\leq C_k \Big(\sum_{j = 0}^{k-1} \Hhn{k-1-j}{\partial_t^j f(0)}^2 + \Enormwg{k}{g}^2 + \Hhn{k}{u_0}^2 \nonumber \\
     &\hspace{3em} + \Han{k}{f}^2 \Big)
    \end{align}
    for all $\alpha \in \N_0^4$ with $|\alpha| = k$ and $k \in \{0,\ldots,m\}$.
    
     We first apply Lemma~\ref{LemmaHigherOrderChainRule}~\ref{ItemHigherOrderChainRuleInG} 
    and~\ref{ItemHigherOrderChainRuleMixed} to obtain a radius $R_1 = R_1(\chi,\sigma,r,\omega_0, \mathcal{U}_1)$ with
    \begin{align*}
     &\|\chi(u)\|_{W^{1,\infty}(\Omega)} + \|\sigma(u)\|_{W^{1,\infty}(\Omega)}  \leq R_1(\chi,\sigma,r,\omega_0, \mathcal{U}_1), \\
     &\|\chi(u(0))\|_{L^\infty(\R^3_+)} + \|\sigma(u(0))\|_{L^\infty(\R^3_+)} \leq R_1(\chi,\sigma,r,\omega_0, \mathcal{U}_1).
    \end{align*}
    
    Set $\gamma_0 = \gamma_0(\chi,\sigma,r,\omega_0, \mathcal{U}_1,T^*) = \gamma_{\ref{TheoremExistenceAndUniquenessOnDomain},0}(\eta(\chi), R_1(\chi,\sigma,r,\omega_0, \mathcal{U}_1),T^*) \geq 1$, 
    where $\gamma_{\ref{TheoremExistenceAndUniquenessOnDomain},0}$ is the corresponding constant from Theorem~\ref{TheoremExistenceAndUniquenessOnDomain}. 
    As $u$ solves~\eqref{EquationIBVPForDerivative} with $\alpha = 0$, Theorem~\ref{TheoremExistenceAndUniquenessOnDomain} yields
    \begin{align*}
     \Gnormwg{0}{u}^2 &\leq e^{2 \gamma_0 T} \sup_{t \in (0,T)} \|e^{-\gamma_0 t} u(t)\|_{L^2(\R^3_+)}^2 \\
     &\leq C_{\ref{TheoremExistenceAndUniquenessOnDomain},0,0}(\eta,R_1) e^{2 \gamma_0 T^*} \Big(\Ltwohn{u_0}^2 + \|g\|_{E_{0,\gamma_0}(J \times \partial \R^3_+)}^2 \Big) \\
     &\hspace{2em} + C_{\ref{TheoremExistenceAndUniquenessOnDomain},0}(\eta,R_1,T^*) e^{2 \gamma_0 T^*} \frac{1}{\gamma_0} \|f\|_{L^2_{\gamma_0}(\Omega)}^2 \\
     &\leq C_0 \Big(\Ltwohn{u_0}^2 + \Enormwg{0}{g}^2 + \Ltwoanwgamma{f}^2 \Big),
    \end{align*}
    where $C_0 = C_0(\chi,\sigma,r,\omega_0,\mathcal{U}_1,T^*)$ and $C_{\ref{TheoremExistenceAndUniquenessOnDomain},0,0}$ respectively $C_{\ref{TheoremExistenceAndUniquenessOnDomain},0}$ 
    denote the corresponding constants from Theorem~\ref{TheoremExistenceAndUniquenessOnDomain}. 
    This inequality shows the claim~\eqref{EquationEstimateDerivativeForInductionStep} 
    for $k = 0$.
    
    Let $k \in \{1,\ldots,m\}$ and assume that~\eqref{EquationEstimateDerivativeForInductionStep} has been 
    shown for all $ 0 \leq j  \leq k-1$. We first claim that there are constants $C_{k,\alpha} = C_{k,\alpha}(\chi,\sigma,r,\omega_0,\mathcal{U}_1,T^*)$ such that
    \begin{align}
    \label{EquationHypothesisForMultiindexOfAbsoluteValuek}
     \Gnormwg{0}{\partial^\alpha u}^2 &\leq C_{k,\alpha} \Big(\sum_{j = 0}^{k-1} \Hhn{k-1-j}{\partial_t^j f(0)}^2 + \Enormwg{k}{g}^2 + \Hhn{k}{u_0}^2 \nonumber \\
     &\hspace{4em} + \Han{k}{f}^2 + \int_0^T \sum_{\beta  \in \N_0^4,  |\beta| = k} \Ltwohn{\partial^\beta u(s)}^2 ds \Big)
    \end{align}
    for all $\alpha \in \N_0^4$ with $|\alpha| = k$. We show~\eqref{EquationHypothesisForMultiindexOfAbsoluteValuek} 
    by another induction, this time with respect to $\alpha_3$.
    
    Let $\alpha \in \N_0^4$ with $|\alpha| = k$ and $\alpha_3 = 0$. In step I) we have seen that 
    $\partial^\alpha u$ solves the initial boundary value problem~\eqref{EquationIBVPForDerivative}. 
    Hence, Theorem~\ref{TheoremExistenceAndUniquenessOnDomain}  yields
    \begin{align*}
     &\Gnormwg{0}{\partial^\alpha u}^2 \leq e^{2 \gamma_0 T} \sup_{t \in (0,T)} \Ltwohn{e^{- \gamma_0 t} \partial^\alpha u(t)}^2 \\
     &\leq  C_{\ref{TheoremExistenceAndUniquenessOnDomain},0,0}(\eta,R_1) e^{2 \gamma_0 T^*} \Big(\Ltwohn{\partial^{(0,\alpha_1,\alpha_2,0)} S_{\chi,\sigma,\R^3_+,m,\alpha_0}(0,f,u_0)}^2  \\
     &\hspace{9em} + \|\partial^\alpha g\|_{E_{0,\gamma_0}(J \times \partial \R^3_+)}^2 \Big) + C_{\ref{TheoremExistenceAndUniquenessOnDomain},0}(\eta,R_1,T^*) e^{2 \gamma_0 T^*} \frac{1}{\gamma_0} \|f_\alpha\|_{L^2_{\gamma_0}(\Omega)}^2 \\
     &\leq C(\chi,\sigma,k,r,\omega_0,\mathcal{U}_1,T^*) \Big(\sum_{j = 0}^{k-1} \Hhn{k - 1 -j}{\partial_t^j f(0)}^2 + \|g\|_{E_{k}(J \times \partial \R^3_+)}^2 + \Hhn{k}{u_0}^2 \Big) \\
     &\hspace{12em} +  \Han{k}{f}^2 +  \Han{k}{u}^2 \Big),
    \end{align*}
    where we employed Lemma~\ref{LemmaNonlinearHigherOrderInitialValues} and~\eqref{EquationFinalEstimateForFalpha}.
    Using the induction hypothesis~\eqref{EquationEstimateDerivativeForInductionStep} for the derivatives of $u$ 
    of order smaller or equal than $k-1$, we 
    arrive at
    \begin{align*}
     &\Gnormwg{0}{\partial^\alpha u}^2 \leq C_{k,\alpha}(\chi,\sigma,r,\omega_0,\mathcal{U}_1,T^*) \Big(\sum_{j=0}^{k-1} \Hhn{k-1-j}{\partial_t^j f(0)}^2  + \|g\|_{E_{k}(J \times \partial \R^3_+)}^2   \\
      &\hspace{8 em} + \Hhn{k}{u_0}^2 + \Han{k}{f}^2 + \int_0^T \sum_{\beta \in \N_0^4, |\beta| = k}  \Ltwohn{\partial^\beta u(s)}^2 ds \Big),
    \end{align*}
    which is~\eqref{EquationHypothesisForMultiindexOfAbsoluteValuek} for all multiindices $\alpha$ with 
    $|\alpha| = k$ and $\alpha_3 = 0$.   
       
    Now, assume that there is a number $l \in \{1,\ldots, k\}$ such that~\eqref{EquationHypothesisForMultiindexOfAbsoluteValuek} 
    is true for all $\alpha \in \N_0^4$ with $|\alpha| = k$ and $\alpha_3 \in \{0,\ldots,l-1\}$.
    
    Take $\alpha \in \N_0^4$ with $|\alpha| = k$ and $\alpha_3 = l$. The multi-index  $\alpha' = \alpha - e_3$ 
    belongs to $\N_0^4$ and satisfies $|\alpha'| = k - 1 \leq m-1$. Due to step I), we know 
    that $\partial^{\alpha'} u$ solves the initial value problem~\eqref{EquationIVPForDerivative} with right-hand side 
    $f_{\alpha'}$ and initial value 
    \begin{equation*}
      \partial^{(0,\alpha_1,\alpha_2,\alpha_3-1)} S_{\chi,\sigma,\R^3_+,m,\alpha_0}(0,f,u_0).
    \end{equation*}
    As $|\alpha'| \leq m-1$, the function   $f_{\alpha'}$ belongs to $\Ha{1}$ by~\eqref{EquationEstimateForH1NormFalpha}, 
    the derivative of the higher order initial value $\partial^{(0,\alpha_1,\alpha_2,\alpha_3 -1)} S_{\chi,\sigma,G,m,\alpha_0}(0,f,u_0)$ to $\Hh{1}$ by Lemma~\ref{LemmaNonlinearHigherOrderInitialValues}, 
    and $\partial^{\alpha'} u$ to $\G{1}$. Moreover, $\chi(u)$ and $\sigma(u)$ are elements of $\Fupdwl{0}{\operatorname{c}}{\eta}$ respectively 
    $\Fuwl{0}{\operatorname{c}}$, 
    $A_1$ and $A_2$ belong to $\Fcoeff{m}{cp}$ and $A_3 = A_3^{\operatorname{co}}$.
    We can therefore apply Lemma~\ref{LemmaCentralEstimateInNormalDirection}. We choose $\gamma = 1$ to infer
    \begin{align}
    \label{EquationApplyingLemmaCentralEstimateNormalDirectionToDerivative}
     &\Gnormwg{0}{\partial^\alpha u}^2 = \Gnormwg{0}{\partial_3 \partial^{\alpha'} u}^2 \leq e^{2 T} \|\nabla \partial^{\alpha'} u\|_{G_{0,1}(\Omega)}^2 \nonumber\\
     &\leq e^{(2+C_1) T} \Big((C_{1,0} + T C_1) \Big( \sum_{j = 0}^2 \|\partial_j \partial^{\alpha'} u\|_{G_{0,1}(\Omega)}^2 
	    + \Ltwohn{f_{\alpha'}(0)}^2 \Big) \nonumber\\
     &\hspace{1.5em} + (C_{1,0} + T C_1) \Hhn{1}{\partial^{(0,\alpha_1,\alpha_2,\alpha_3 -1)} S_{\chi,\sigma,\R^3_+,m,\alpha_0}(0,f,u_0)}^2 + C_1 \| f_{\alpha'}\|_{H^1(\Omega)}^2 \Big) \nonumber\\
     &\leq C(\chi,\sigma,k,r,\omega_0,\mathcal{U}_1,T^*) \Big( \sum_{j = 0}^2 \|\partial_j \partial^{\alpha'} u\|_{G_{0}(\Omega)}^2 
	     +  \sum_{j = 0}^{k-1} \Hhn{k-1-j}{\partial_t^j f(0)}^2  \nonumber\\
     &\hspace{12em}  + \Hhn{k}{u_0}^2 + \Han{k}{f}^2 + \Han{k}{u}^2 \Big),
    \end{align}
    where we used~\eqref{EquationEstimateForFalphaInZero}, Lemma~\ref{LemmaNonlinearHigherOrderInitialValues}, 
    and~\eqref{EquationEstimateForH1NormFalpha} in the last estimate and where
    \begin{align*}
	C_{1,0} &= C_{1,0}(\chi,\sigma,r,\omega_0, \mathcal{U}_1) = C_{\ref{LemmaCentralEstimateInNormalDirection},1,0}(\eta(\chi),R_1(\chi,\sigma,r,\omega_0,\mathcal{U}_1)), \\
	C_1 &= C_1(\chi,\sigma,r,\omega_0,\mathcal{U}_1,T^*) = C_{\ref{LemmaCentralEstimateInNormalDirection},1}(\eta(\chi),R_1(\chi,\sigma,r,\omega_0,\mathcal{U}_1),T^*).
    \end{align*}
    Inserting the induction hypothesis for $\|\partial_j \partial^{\alpha'} u\|_{G_{0}(\Omega)}^2$ 
    and the induction hypothesis~\eqref{EquationEstimateDerivativeForInductionStep} for the derivatives 
    of $u$ of order smaller or equal than $k-1$, we obtain~\eqref{EquationHypothesisForMultiindexOfAbsoluteValuek} 
    for all $\alpha \in \N_0^4$ with $|\alpha| = k$ and $\alpha_3 = l$. By induction, we thus 
    infer that the estimate in~\eqref{EquationHypothesisForMultiindexOfAbsoluteValuek} is valid for all multiindices 
    $\alpha \in \N_0^4$ with $|\alpha| = k$. 
    
     We now sum in~\eqref{EquationHypothesisForMultiindexOfAbsoluteValuek} over all multiindices with $|\alpha| = k$, which yields
    \begin{align*}
     &\sum_{\alpha \in \N_0^4, |\alpha| = k} \Ltwohn{\partial^\alpha u(T)}^2 
     \leq \sum_{\alpha \in \N_0^4, |\alpha| = k} \Gnormwg{0}{\partial^\alpha u}^2 \\
     &\leq \Big(\sum_{\alpha \in \N_0^4,  |\alpha| = k}C_{k,\alpha}\Big)
     \Big( \sum_{j = 0}^{k-1} \Hhn{k-1-j}{\partial_t^j f(0)}^2 + \|g\|_{E_{k}((0,T) \times \partial \R^3_+)}^2  + \Hhn{k}{u_0}^2\\
      &\hspace{8em} + \|f\|_{H^k((0,T) \times \R^3_+)}^2  + \int_0^T \sum_{\beta \in \N_0^4, |\beta| = k}  \Ltwohn{\partial^\beta u(s)}^2 ds \Big).
    \end{align*}
    Recall that the time $T \in (0, T']$ was arbitrary. 
    Since $T \mapsto \|f\|_{H^k((0,T)\times\R^3_+)}^2$ and $T \mapsto \|g\|_{E_k((0,T) \times \partial \R^3_+)}^2$ are monotonically increasing, 
    Gronwall's inequality leads to
    \begin{align}
    \label{EquationEstimateAfterGronwall}
     \sum_{\alpha \in \N_0^4, |\alpha| = k} \Ltwohn{\partial^\alpha u(t)}^2 
     &\leq C_{k}' e^{C_k' t} \Big(\sum_{j = 0}^{k-1} \Hhn{k-1-j}{\partial_t^j f(0)}^2 + \|g\|_{E_k((0,t) \times \R^3_+)}^2 \nonumber \\
     &\hspace{6em} + \Hhn{k}{u_0}^2 + \|f\|_{H^k((0,t)\times\R^3_+)}^2 \Big) 
    \end{align}
    for all $t \in [0,T']$, where we defined 
    $C_{k}' = C_k'(\chi,\sigma,r,\omega_0,\mathcal{U}_1,T^*)$ by $\sum_{\alpha \in \N_0^4, |\alpha| = k} C_{k, \alpha}$. Defining $C_k = C_k(\chi,\sigma,r,\omega_0,\mathcal{U}_1,T^*)$ 
    by $C_k' e^{C_k' T^*}$ 
    and taking again a fixed time $T \in (0,T']$, we particularly obtain
    \begin{align*}
     \sum_{\alpha \in \N_0^4, |\alpha| = k} \Ltwohn{\partial^\alpha u(t)}^2 
     &\leq C_{k} \Big(\sum_{j = 0}^{k-1} \Hhn{k-1-j}{\partial_t^j f(0)}^2 + \|g\|_{E_k((0,T) \times \partial \R^3_+)}^2 \\
     &\hspace{4em}+ \Hhn{k}{u_0}^2 + \|f\|_{H^k((0,T)\times\R^3_+)}^2 \Big) 
    \end{align*}
    for all $t \in [0,T]$.
    
    We conclude that~\eqref{EquationEstimateDerivativeForInductionStep} is true for 
    all $\alpha \in \N_0^4$ with $|\alpha| = k$. Again by induction, we infer that~\eqref{EquationEstimateDerivativeForInductionStep} 
    is true for all $\alpha \in \N_0^4$ with $|\alpha| \in \{0,\ldots,m\}$. Summing over all multiindices 
    with absolute value between $0$ and $m$, the assertion of the proposition finally follows.
\end{proof}
  This proposition now easily implies the improved blow-up condition. We postpone the statement 
  and its proof to the full local wellposedness theorem below.
 \section{Continuous dependance and local wellposedness theorem}
 \label{SectionContinuousDependance}
 The investigation of continuous dependance for quasilinear problems is challenging because of a loss 
 of derivatives. It occurs since the difference of 
 two solutions satisfies an equation with a less regular right-hand side. 
 For the same reason 
 one can only hope for continuous (and not Lipschitz-continuous) dependance on the data. 
 We start with an approximation lemma in low regularity. It is the first step to overcome the 
 loss of derivatives. 
 \begin{lem}
   \label{LemmaApporixmationInLinearSituationInL2}
   Let $J \subset \R$ be an open interval, $t_0 \in \clJ$, and $\eta > 0$. 
   Take coefficients $A_{0,n}, A_0 \in \Fupdwl{3}{\operatorname{c}}{\eta}$, 
   $A_1, A_2 \in \Fcoeff{3}{cp}$, 
   $A_3  = A_3^{\operatorname{co}}$, and $D_n, D \in \Fuwl{3}{\operatorname{c}}$ for all $n \in \N$ 
   such that $(A_{0,n})_n$ and $(D_n)_n$ are bounded in $W^{1,\infty}(\Omega)$ and converge to $A_0$ 
   respectively $D$ in $L^\infty(\Omega)$. Set $B = B^{\operatorname{co}}$. 
   Choose data $u_0 \in \Ltwoh$, $g \in \E{0}$, and $f \in \Ltwoa$. Let $u_n$ denote the 
   weak solution of the linear initial boundary value problem~\eqref{IBVP}
   with differential operator 
   $L(A_{0,n},A_1, A_2, A_3, D_n)$ and these data  for all $n \in \N$
   and $u$ be the weak solution of~\eqref{IBVP} with differential operator 
   $L(A_0,\ldots, A_3,D)$ and the same data.
   Then $(u_n)_n$ converges to $u$ in $\G{0}$.
  \end{lem}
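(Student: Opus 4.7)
The plan is to combine the basic $L^2$-energy estimate (the case $m = 0$ of Theorem~\ref{TheoremExistenceAndUniquenessOnDomain}, originally due to~\cite{Eller}) with a density argument that lifts the assumed $L^\infty(\Omega)$-convergence of the coefficients to $\G{0}$-convergence of the weak solutions. The key point is that this basic estimate takes the form
\begin{equation*}
\|v\|_{\G{0}}^2 \leq C\bigl(\|v_0\|_{\Ltwoh}^2 + \|g'\|_{\E{0}}^2 + \gamma^{-1} \|f'\|_{L^2_\gamma(\Omega)}^2\bigr)
\end{equation*}
for the weak $\G{0}$-solution $v$ of~\eqref{IBVP} driven by data $(v_0, g', f')$, with constant $C$ depending only on $\eta$, the $W^{1,\infty}(\Omega)$-norms of the coefficients, and $T'$. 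The uniform $W^{1,\infty}$-bounds on $(A_{0,n})_n$ and $(D_n)_n$ then make this estimate uniform in $n$.

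I would first pick approximating data $(u_0^k, g^k, f^k) \in \Hh{1} \times \E{1} \times \Ha{1}$ converging to $(u_0, g, f)$ in $\Ltwoh \times \E{0} \times \Ltwoa$ and satisfying the linear compatibility conditions~\eqref{EquationCompatibilityConditionPrecised} of order~$1$ for the \emph{limit} tuple $(t_0, A_0, A_1, A_2, A_3, D, B)$. With this choice, Theorem~\ref{TheoremExistenceAndUniquenessOnDomain} for $m = 1$ provides a $\G{1}$-solution $u^k$ of the IBVP with the limit operator $L := L(A_0, A_1, A_2, A_3, D)$ and these data. Let $u_n^k$ denote the weak $\G{0}$-solution of the IBVP with the perturbed operator $L_n := L(A_{0,n}, A_1, A_2, A_3, D_n)$ and the same data, which exists by~\cite{Eller}.

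Next I would decompose $u_n - u = (u_n - u_n^k) + (u_n^k - u^k) + (u^k - u)$. The outer differences solve the $L_n$- and $L$-problems, respectively, with zero source and data given by $(u_0^k - u_0, g^k - g, f^k - f)$. The basic estimate with its $n$-uniform constants bounds both in $\G{0}$ by the norm of these data differences, so both vanish as $k \to \infty$ uniformly in $n$. For the middle term, subtracting the defining equations yields
\begin{equation*}
L_n(u_n^k - u^k) = (A_0 - A_{0,n})\partial_t u^k + (D - D_n) u^k \quad \text{in } \Omega,
\end{equation*}
together with $B(u_n^k - u^k) = 0$ on $\partial \R^3_+$ and $(u_n^k - u^k)(t_0) = 0$ in $\R^3_+$. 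Since $u^k \in \G{1}$, the right-hand side lies in $\Ltwoa$ with norm dominated by $(\|A_0 - A_{0,n}\|_{L^\infty(\Omega)} + \|D - D_n\|_{L^\infty(\Omega)})\,\|u^k\|_{\G{1}}$. A further application of the basic estimate to $L_n$ then gives $\|u_n^k - u^k\|_{\G{0}} \to 0$ as $n \to \infty$ for each fixed $k$. Choosing $k$ large first and then $n$ large, the triangle inequality yields the claim.

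The main obstacle is constructing approximating data that meet the first-order compatibility condition
\begin{equation*}
B A_0^{-1}\Bigl(f^k(t_0) - \sum_{j=1}^{3} A_j \partial_j u_0^k - D(t_0) u_0^k\Bigr) = \partial_t g^k(t_0) \quad \text{on } \partial \R^3_+,
\end{equation*}
which couples the boundary values of all three approximants and is generally destroyed by naive mollification. One handles this by mollifying freely in the interior and then adding a trace-lifting correction supported near $\partial \R^3_+$, using the constant rank-$2$ structure of $B = B^{\operatorname{co}}$ together with the decomposition~\eqref{EquationDecompositionOfA3co} to prescribe the needed boundary values. Since compatibility is required only for the fixed limit operator $L$ and not for each $L_n$, no $n$-dependent correction is necessary, which is essential for the uniformity in~$n$ in the argument above.
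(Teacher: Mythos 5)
Your proof takes essentially the same route as the paper: the identical three-term decomposition $u_n - u = (u_n - u_n^k) + (u_n^k - u^k) + (u^k - u)$, uniform-in-$n$ control of the outer terms via the $m = 0$ a priori estimate applied to the data differences, the observation that $L_n(u_n^k - u^k) = (A_0 - A_{0,n})\partial_t u^k + (D - D_n)u^k$ with zero boundary and initial data for the middle term, and the $\epsilon/3$ closing step (fix $k$, then send $n \to \infty$). The approximation of the data using a trace lifting that exploits the constant rank-$2$ structure of $B^{\operatorname{co}}$ is also exactly the device used in the paper.

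One point where you over-engineer the argument and display the wrong equation: you only need the linear compatibility condition of order $1$, which in the paper's indexing (see~\eqref{EquationCompatibilityConditionPrecised} with $m = 1$, hence only $p = 0$) is simply $B u_0^k = g^k(t_0)$ on $\partial \R^3_+$. The condition you display, involving $A_0^{-1}(f^k(t_0) - \sum_j A_j \partial_j u_0^k - D(t_0) u_0^k)$, is the $p = 1$ equation and would only be required for order-$2$ compatibility; it is not needed here, and constructing data satisfying it would in fact require more regularity on $u_0^k$ than $\Hh{1}$. Because the correct condition $B u_0^k = g^k(t_0)$ does not involve the coefficients at all, your observation that compatibility need only hold for the limit operator $L$ is vacuously true rather than a genuine simplification, and the ``main obstacle'' you identify is considerably milder than you suggest: it reduces to lifting $g^k$ to a function $h^k$ with $B h^k = g^k$ (possible since $B^{\operatorname{co}}$ has constant rank $2$), extending into the half-space, and adding $h^k(0)$ to a $C_c^\infty(\R^3_+)$-approximation of $u_0$. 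This is what the paper does, and it works; the inconsistency is in your identification of which condition needs to be met, not in the construction you propose.
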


  \begin{proof}
   Without loss of generality we assume that $J = (0,T)$ for some $T > 0$ and $t_0 =0$. 
   Set $A_{0,0} = A_0$ and $D_0 = D$. Take $r > 0$ with 
   $\|A_{0,n}\|_{W^{1,\infty}(\Omega)} \leq r$ and $\|D_n\|_{W^{1,\infty}(\Omega)} \leq r$ for all $n \in \N_0$.
   
   I) We first assume that $u_0$ belongs to $\Hh{1}$, $g$ to $\E{1}$,  $f$ to $\Ha{1}$, and that 
   $B u_0 = g(0)$ on $\partial \R^3_+$. Then
   the tuples $(0, A_{0,n},A_1, A_2, A_3,B, D_{n}, f,g, u_0)$ 
   fulfill the linear compatibility conditions~\eqref{EquationCompatibilityConditionPrecised} of first order 
   for each $n \in \N_0$. The solutions $u_n$ and $u$ are thus contained in $\G{1}$ by 
   Theorem~\ref{TheoremExistenceAndUniquenessOnDomain}.
   The difference $u_n - u$ further solves the linear initial boundary value problem
   \begin{equation*}
    \left\{ 
      \begin{aligned}
       L(A_{0,n}, A_1, A_2, A_3, D_n)(u_n - u) &= f_n, \quad &&x \in \R^3_+, \quad &t \in J; \\
       B(u_n - u) &= 0, && x \in \partial \R^3_+, &t \in J; \\
	(u_n - u)(0) &= 0,	&& x \in \R^3_+;
      \end{aligned}
    \right.
   \end{equation*}
  where $f_n = (A_0 - A_{0,n}) \partial_t u + (D - D_n) u$ for all $n \in \N$. As $u$ is an element 
  of $\G{1}$, the right-hand side of the differential equation above 
  belongs to $\Ltwoa$. Theorem~\ref{TheoremExistenceAndUniquenessOnDomain}
  thus provides constants $\gamma = \gamma_{\ref{TheoremLocalExistenceNonlinear},0}(\eta,r)$ and 
  $C_0 = \max\{C_{\ref{TheoremLocalExistenceNonlinear},0,0}(\eta,r),C_{\ref{TheoremLocalExistenceNonlinear},0}(\eta,r,T)\}$ 
  such that
  \begin{align*}
   \Gnormwg{0}{u_n - u}^2 &\leq e^{2 \gamma T} \Gnorm{0}{u_n - u}^2 \leq C_0 e^{2 \gamma T} \Ltwoan{(A_0 \! - \! A_{0,n}) \partial_t u + (D \! - \! D_n) u}^2 \\
   &\leq 2 C_0 e^{2\gamma T}(\|A_{0,n} \! - \! A_0\|_{L^\infty(\Omega)}^2 \Ltwoan{\partial_t u}^2 + \|D_n \! - \! D\|_{L^\infty(\Omega)}^2 \Ltwoan{u}^2)
  \end{align*}
  for all $n \in \N$. Since $A_{0,n} \rightarrow A_0$ and $D_n \rightarrow D$ in $L^\infty(\Omega)$, 
  we conclude that the functions $u_n$ tend to $u$ in $\G{0}$ as $n \rightarrow \infty$.
  
  II) We now come to the general case. Take sequences $(f_j)_j$ in $\Ha{1}$,  $(g_j)_j$ in $\E{2}$, 
  and $(\tilde{u}_{0,j})_j$ in $C_c^\infty(\R^3_+)$ converging to $f$, $g$ and $u_0$ in $\Ltwoa$,  $\E{0}$, 
  and $\Ltwoh$ respectively. As $B$ is constant and has rank $2$, there is a sequence $(\tilde{h}_j)_j$ 
  in $\E{2}$ with $B \tilde{h}_j = g_j$  for all $j \in \N$. Extending 
  $\tilde{h}_j$ to $J \times \R^3_+$ via a suitable sequence of smooth cut-off functions 
  in $x_3$-direction, we obtain 
  functions $h_j$  in $C(\clJ, \Hh{1})$ such that $u_{0,j} = \tilde{u}_{0,j} + h_j(0)$ converges 
  to $0$ in $\Ltwoh$ as $j \rightarrow \infty$ and $B u_{0,j} = B h_j(0) = B \tilde{h}_j(0) = g_j(0)$ on $\partial \R^3_+$
  for all $j \in \N$. We refer to step~I) of the proof of Theorem~4.13 in~\cite{SpitzDissertation} for 
  the details of this construction.
  Note that the 
  tuples $(0,A_{0,n},A_1, A_2, A_3, D_n,B, f_j, g_j, u_{0,j})$ consequently fulfill the linear compatibility conditions~\eqref{EquationCompatibilityConditionPrecised}
  of order $1$ for all $n, j \in \N$.
  
  Let the function $u_n^j$ denote the weak solution of~\eqref{IBVP} 
  with differential operator $L(A_{0,n}, A_1, A_2, A_3, D_n)$ and data $f_j$,
  $g_j$, and $u_{0,j}$ as well as $u^j$ the weak solution of~\eqref{IBVP} with 
  differential operator $L(A_0, \ldots, A_3, D)$, and the same data for all $n,j \in \N$. 
  These solutions belong to $\G{1}$ by Theorem~\ref{TheoremExistenceAndUniquenessOnDomain}. 
  Observe that the difference $u_n^j - u_n$ solves~\eqref{IBVP} with differential operator 
  $L(A_{0,n},A_1, A_2, A_3, D_n)$, inhomogeneity $f_j - f$, boundary value $g_j - g$, 
  and initial value $u_{0,j} - u_0$, and the function $u^j - u$ solves~\eqref{IBVP} with 
  differential operator $L(A_0,A_1, A_2, A_3,D)$ and the same data. The a priori estimate in Theorem~\ref{TheoremExistenceAndUniquenessOnDomain} 
  thus shows
  \begin{align}
   &\Gnormwg{0}{u_n^j - u_n}^2 \leq e^{2 \gamma T} \Gnorm{0}{u_n^j - u_n}^2 \label{EquationEstimateForDifferenceunjun}\\
   &\leq C_0 \,e^{2 \gamma T} \Big(\Ltwohn{u_{0,j} - u_0}^2 + \Enorm{0}{g_j - g}^2 +  \Ltwoan{f_j - f}^2 \Big), \nonumber\\
   &\Gnormwg{0}{u^j - u}^2 \leq e^{2 \gamma T} \Gnorm{0}{u^j - u}^2 \label{EquationEstimateForDifferenceuju}\\
   &\leq C_0 \,e^{2 \gamma T} \Big( \Ltwohn{u_{0,j} - u_0}^2 + \Enorm{0}{g_j - g}^2 +  \Ltwoan{f_j - f}^2\Big) \nonumber
  \end{align}
  for all $n, j \in \N$, where $\gamma$ and $C_0$ were introduced in step I).

  Let $\epsilon > 0$. Because of the convergence of the data, 
  we find an index $j_0$ such that
  \begin{equation}
  \label{EquationFixingj0}
   C_0 \,e^{2 \gamma T} \Big(\Ltwohn{u_{0,j_0} - u_0}^2 + \Enorm{0}{g_{j_0} - g}^2 + \Ltwoan{f_{j_0} - f}^2\Big) \leq \frac{\epsilon^2}{9}.
  \end{equation}
  On the other hand, the tuple $(f_{j_0},g_{j_0}, u_{0,j_0})$ fulfills the assumptions of step I), which therefore implies
  $u_n^{j_0} \rightarrow u^{j_0}$ in $\G{0}$ as $n \rightarrow \infty$. Hence, there is an index $n_0 \in \N$ 
  such that 
  \begin{equation}
   \label{EquationEstimateDifferenceunjuj}
   \Gnormwg{0}{u_n^{j_0} - u^{j_0}} \leq \frac{\epsilon}{3}
  \end{equation}
  for all $n \geq n_0$. Combining~\eqref{EquationEstimateForDifferenceunjun} to~\eqref{EquationEstimateDifferenceunjuj}, 
  we arrive at
  \begin{align*}
   \Gnormwg{0}{u_n - u} &\leq \Gnormwg{0}{u_n - u_n^{j_0}} + \Gnormwg{0}{u_n^{j_0} - u^{j_0}} + \Gnormwg{0}{u^{j_0} - u} \leq  \epsilon
  \end{align*}
  for all $n \geq n_0$.
  \end{proof}
  
  The next lemma contains the core of the proof for the continuous dependance. It states that given
  a sequence of data converging in $H^m$ respectively $E_m$ and assuming that the sequence of 
  corresponding solutions of~\eqref{EquationNonlinearIBVP} converges in $G_{m-1}$, then the solutions 
  also converge in $G_m$. The proof involves techniques developed for the full space
  (see e.g.~\cite{BahouriCheminDanchin}) 
  which prevents to lose regularity because of the quasilinearity. Using methods
  from~\cite{SpitzMaxwellLinear}, we again exploit
  the structure of Maxwell's equations to avoid the loss of a derivative due to the characteristic boundary.
  \begin{lem}
  \label{LemmaConvergenceOfNonlinearSolutionInGmProvidedConvergenceInGmminus1}
   Let $J' \subseteq \R$ be an open and bounded interval, $t_0 \in \overline{J'}$, and $m \in \N$ with $m \geq 3$.
   Take functions $\chi \in \mlpdwl{m,6}{G}{\mathcal{U}}{c}$ and $\sigma \in \mlwl{m,6}{G}{\mathcal{U}}{c}$. Set
   \begin{align*}
   	B(x) = \begin{pmatrix}
             0 &\nu_3(x) &-\nu_2(x) &0 &0 &0 \\
      -\nu_3(x) &0 &\nu_1(x) &0 &0 &0 \\
      \nu_2(x) &-\nu_1(x) &0 &0 &0 &0 
            \end{pmatrix},
   \end{align*}
   for all $x \in \partial G$, where $\nu$ denotes the unit outer normal vector of $\partial G$. 
   Choose $f_n, f \in H^m(J' \times G)$, $g_n, g \in E_m(J' \times \partial G)$, 
   and $u_{0,n}, u_0 \in \Hhdom{m}$ 
   for all $n \in \N$ with 
   \begin{align*}
    \Hhndom{m}{u_{0,n} - u_0} \longrightarrow 0, \quad \|g_n - g\|_{E_m(J' \times \partial G)} \longrightarrow 0, \quad \|f_n - f\|_{H^m(J' \times G)} \longrightarrow 0,
   \end{align*}
   as $n \rightarrow \infty$.
   We further assume that~\eqref{EquationNonlinearIBVP} 
   with data $(t_0, f_n, g_n, u_{0,n})$ and $(t_0, f, g, u_0)$ have $G_m(J' \times G)$-solutions
   $u_n$ and $u$ for all $n \in \N$, that there is a compact subset 
   $\tilde{\mathcal{U}}_1$ of $\mathcal{U}$ 
   with $\image u(t) \subseteq \tilde{\mathcal{U}}_1$ for all $t \in J'$, that $(u_n)_n$ is bounded in $G_m(J' \times G)$, and that $(u_n)_n$ converges to $u$ 
   in $G_{m-1}(J' \times G)$.
   
   Then the functions $u_n$ converge to $u$ in $G_m(J' \times G)$.
  \end{lem}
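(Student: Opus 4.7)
The plan is to set $w_n = u_n - u$ and observe that $w_n$ solves the linear initial boundary value problem
\begin{align*}
L(\chi(u_n), A_1^{\operatorname{co}}, A_2^{\operatorname{co}}, A_3^{\operatorname{co}}, \sigma(u_n)) w_n &= F_n, \\
B w_n &= g_n - g, \\
w_n(t_0) &= u_{0,n} - u_0,
\end{align*}
with $F_n = (f_n - f) + (\chi(u) - \chi(u_n)) \partial_t u + (\sigma(u) - \sigma(u_n)) u$. Since $(u_n)_n$ is bounded in $G_m(J' \times G)$ and converges to $u$ in $G_{m-1}(J' \times G)$, Lemma~\ref{LemmaHigherOrderChainRule} and Corollary~\ref{CorollaryEstimateForDifferenceHigherOrder} imply that $\chi(u_n), \sigma(u_n)$ are uniformly bounded in $F_m(J' \times G)$ with $\chi(u_n) \geq \eta(\chi) > 0$, and that $\chi(u_n) \to \chi(u)$ and $\sigma(u_n) \to \sigma(u)$ in $G_{m-1}$. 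Moreover, $u_n \to u$ in $W^{1,\infty}(J' \times G)$ by Sobolev's embedding (using $m \geq 3$).

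To obtain $G_m$-convergence, I would mimic the argument in the proof of Proposition~\ref{PropositionEstimateForNonlinearSolutionOnDomain} applied to the linear problem above with source $F_n$ in place of the nonlinear right-hand side. After the localization procedure to the half-space, for each $\alpha \in \N_0^4$ with $|\alpha| \leq m$ the derivative $\partial^\alpha w_n$ satisfies a linear equation of the same form with right-hand side $F_{n,\alpha}$ obtained by applying $\partial^\alpha$ and collecting commutator terms. For tangential multi-indices ($\alpha_3 = 0$) the boundary datum is $\partial^\alpha(g_n - g)$ and one applies Theorem~\ref{TheoremExistenceAndUniquenessOnDomain} at order $0$ to $\partial^\alpha w_n$; for $\alpha$ with $\alpha_3 \geq 1$ one writes $\partial^\alpha w_n = \partial_3 \partial^{\alpha - e_3} w_n$ and invokes Lemma~\ref{LemmaCentralEstimateInNormalDirection} on $\partial^{\alpha - e_3} w_n$, exploiting the Maxwell structure to control normal derivatives through tangential ones plus the source. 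The commutator contributions $[\partial^\alpha, \chi(u_n)] \partial_t w_n$ and $[\partial^\alpha, \sigma(u_n)] w_n$ are controlled via Lemma~\ref{LemmaMoserTypeInequalities} by a constant (depending on the uniform $G_m$-bound of $(u_n)_n$ and on the compact set $\tilde{\mathcal{U}}_1$) times $\|w_n\|_{H^{|\alpha|}(J' \times G)}$, and are therefore absorbable by Gronwall's inequality.

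The hard part will be showing that the source derivatives $\partial^\alpha F_n$ tend to zero in $L^2$, since the top-order term $(\chi'(u_n)\partial^\alpha u_n - \chi'(u) \partial^\alpha u) \cdot \partial_t u$ produced by Fa\`a di Bruno (Lemma~\ref{LemmaHigherOrderChainRule}) naively appears to lose a derivative on $u_n - u$. This is overcome by the telescoping decomposition used in~\cite{BahouriCheminDanchin}: write
\begin{align*}
\chi'(u_n) \partial^\alpha u_n - \chi'(u) \partial^\alpha u = (\chi'(u_n) - \chi'(u)) \partial^\alpha u_n + \chi'(u) \partial^\alpha w_n,
\end{align*}
where the first summand has $L^2$-norm bounded by $\|\chi'(u_n) - \chi'(u)\|_{L^\infty(J' \times G)} \|u_n\|_{G_m(J' \times G)}$, which tends to zero by uniform $G_m$-boundedness of $(u_n)_n$ combined with $W^{1,\infty}$-convergence $u_n \to u$, while the second summand contributes $C\|w_n\|_{G_m(J' \times G)}$ and is absorbed by Gronwall. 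The analogous decomposition handles every lower-order summand produced by Fa\`a di Bruno applied to $\chi(u) - \chi(u_n)$ and $\sigma(u) - \sigma(u_n)$, each contribution containing either a factor of the form $\partial^\gamma w_n$ with $|\gamma| \leq m-1$ (which tends to zero in $L^\infty$ or $L^2$ by the assumed $G_{m-1}$-convergence) or a top-order factor $\partial^\alpha w_n$ multiplied by a uniformly bounded matrix. Summing the resulting estimates over all $|\alpha| \leq m$ and applying Gronwall's inequality, together with the hypotheses $\|u_{0,n} - u_0\|_{\Hhdom{m}} \to 0$, $\|g_n - g\|_{E_m(J' \times \partial G)} \to 0$, and $\|f_n - f\|_{H^m(J' \times G)} \to 0$, we conclude $\|u_n - u\|_{G_m(J' \times G)} \to 0$.
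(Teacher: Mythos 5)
Your proposal correctly identifies the broad strategy (localize, differentiate, split into tangential and normal cases, use Theorem~\ref{TheoremExistenceAndUniquenessOnDomain} plus Lemma~\ref{LemmaCentralEstimateInNormalDirection}, close via Gronwall), and your telescoping decomposition does handle the terms where derivatives fall on the $\chi$--$\sigma$ factors. However, there is a genuine gap: you cannot form $\partial^\alpha F_n$ for $|\alpha| = m$, because $F_n$ contains $(\chi(u)-\chi(u_n))\,\partial_t u$ and the Leibniz term with $\beta = 0$, namely $(\chi(u)-\chi(u_n))\,\partial^\alpha\partial_t u$, involves an order-$(m+1)$ derivative of $u$, which does not exist in $L^2$ for $u \in G_m(J'\times G)$. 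Your telescoping trick only addresses the opposite-endpoint term $\partial^\alpha\bigl(\chi(u)-\chi(u_n)\bigr)\cdot\partial_t u$, where all $m$ derivatives fall on the composed factor; the factor $\partial_t u$ sitting behind it is harmless there. The term where all derivatives fall on $\partial_t u$ is the real obstruction, and it cannot be rescued by smallness of the $L^\infty$-prefactor since the thing it multiplies is not in $L^2$ at all. Note also that the Moser estimate you appeal to (Lemma~\ref{LemmaMoserTypeInequalities}) only treats $\partial^\beta\theta(u)\,\partial_t\partial^{\alpha-\beta}u$ for $\beta > 0$, and therefore deliberately excludes this case.

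The paper avoids the problem altogether by never subtracting the equations for $\partial^\alpha u_n$ and $\partial^\alpha u$. Instead, it decomposes $\partial^\alpha u_n = w_n + z_n$, where both pieces solve problems with the \emph{same} operator $L_n$ (coefficients evaluated at $u_n$): $w_n$ has the fixed source $f_{\alpha,\infty}$, fixed boundary datum, and fixed initial value, while $z_n$ carries the difference data $f_{\alpha,n}-f_{\alpha,\infty}$, $\partial^\alpha(g_n-g)$, $w_{0,n}-w_{0,\infty}$. The difference $f_{\alpha,n}-f_{\alpha,\infty}$ involves only commutator terms, each containing at most $m$ derivatives of $u_n$ or $u$, so no loss occurs there; it is estimated by your telescoping-type argument plus Gronwall. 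For $w_n$, since the data are fixed and only the coefficients vary (convergent in $L^\infty$, bounded in $W^{1,\infty}$), Lemma~\ref{LemmaApporixmationInLinearSituationInL2} gives the \emph{qualitative} convergence $w_n \to w_\infty = \partial^\alpha u$ in $G_0$. That lemma is itself proved by density (approximating the data by $H^1$-data for which $\partial_t u$ is in $L^2$ and then a diagonal argument); this is precisely the indirect step that circumvents the undefined $(m+1)$-order derivative, and it has no analogue in your direct Gronwall scheme. Without something in the role of Lemma~\ref{LemmaApporixmationInLinearSituationInL2} and the $w_n$/$z_n$ split, the argument cannot close.
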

  
  \begin{proof}
   Without loss of generality we assume that $t_0 = 0$, that $J' = (0, T')$ for a number $T' > 0$, and 
   that $\chi$ and $\sigma$ fulfill~\eqref{EquationPropertyForL2}, cf. Remark~\ref{RemarkChiuInFm}. 
   The proof is again reduced to the half-space case $G = \R^3_+$ via local charts. We do not give the 
   details of the localization procedure here but assume as   
   in the proof of Proposition~\ref{PropositionEstimateForNonlinearSolutionOnDomain} that $G = \R^3_+$ and that we have 
   spatial coefficients $A_1, A_2 \in \Fcoeff{m}{cp}$ $A_3 = A_3^{\operatorname{co}}$, and $B = B^{\operatorname{co}}$.
   We refer to~\cite[Section~2]{SpitzMaxwellLinear} and~\cite[Chapter~5]{SpitzDissertation} for the details.

   Let $T \in (0, T']$, $J = (0,T)$, and $\Omega = J \times \R^3_+$. 
   Sobolev's embedding yields a constant $C_S$ depending on the length of the interval $J'$ such that
   \begin{equation}
   \label{EquationConvergenceOffAndTimeDerivativesInZero}
    \sum_{j=0}^{m-1} \Hhn{m-j-1}{\partial_t^j f_n(0) - \partial_t^j f(0)} \leq m C_S \|f_n - f\|_{H^m(J' \times \R^3_+)} \longrightarrow 0
   \end{equation}
   as $n \rightarrow \infty$.
   We set $\overline{\N} = \N \cup \{\infty\}$, $u_\infty = u$, $f_\infty = f$, $g_\infty = g$, and $u_{0,\infty} = u_0$. 
   Throughout, let $n \in \overline{\N}$ and $\alpha \in \N_0^4$ with $|\alpha| \leq m$.
   By assumption, \eqref{EquationConvergenceOffAndTimeDerivativesInZero}, and Sobolev's embedding there is a 
   radius $r > 0$ such that 
   \begin{align}
   \label{EquationBoundForun}
   	&\| u_n \|_{G_m(J' \times \R^3_+)} + \| u_n \|_{L^\infty(J' \times \R^3_+)} + \sum_{j = 1}^2 \|A_j\|_{F_m(J' \times \R^3_+)} \leq r, \\
	&\sum_{j = 0}^{m-1} \Hhn{m-j-1}{\partial_t^j f_n(0)} + \Hhn{m}{u_{0,n}} + \|g_n\|_{E_m(J' \times \R^3_+)} + \|f_n\|_{H^m(J' \times \R^3_+)} \leq r. \label{EquationBoundForData} 	
   \end{align}
   Moreover,  $(u_n)_n$ converges 
   to $u$ in $L^\infty(J' \times G)$ so that there is a compact and connected set 
   $\mathcal{U}_1 \subseteq \mathcal{U}$ and an index $n_0$ such that 
   $\image u_n(t) \subseteq \mathcal{U}_1$ for all $t \in J'$ and $n \geq n_0$. 
   Without loss of generality we assume $n_0 = 1$. 
   Lemma~\ref{LemmaHigherOrderChainRule}~\ref{ItemHigherOrderChainRuleInG} then shows that 
   $\chi(u_n)$ and $\sigma(u_n)$ belong to $F_m^{\operatorname{c}}(J' \times \R^3_+)$ 
   and that there is a radius $R = R(\chi, \sigma,m, r,\mathcal{U}_1)$ with
   \begin{align}
   \label{EquationBoundForchiunsigmaun}
   	&\|\chi(u_n)\|_{F_m(J' \times \R^3_+)} + \| \sigma(u_n) \|_{F_m(J' \times \R^3_+)} \leq R.
   \end{align}
  Furthermore, $\chi(u_n)$ is symmetric and positive definite with $\chi(u_n) \geq \eta$.
  We use the operators and right-hand sides
   \begin{align}
	L_n &= L(\chi(u_n), A_1, A_2,A_3, \sigma(u_n)) , \nonumber \\
   	\label{EquationDefinitionfalphan}
   	f_{\alpha,n} &= \partial^\alpha f_n - \!\sum_{0 < \beta \leq \alpha} \binom{\alpha}{\beta} \partial^\beta \chi(u_n) \partial^{\alpha-\beta} \partial_t u_n - \sum_{j = 1}^2 \sum_{0 < \beta \leq \alpha} \binom{\alpha}{\beta} \partial^\beta A_j \partial^{\alpha - \beta} \partial_j u_n \nonumber \\
   	&\quad - \sum_{0 < \beta \leq \alpha} \binom{\alpha}{\beta} \partial^\beta \sigma(u_n) \partial^{\alpha - \beta} u_n.    
   \end{align}
  As in~\cite{SpitzMaxwellLinear}, the function $\partial^\alpha u_n$ then solves the linear initial value problem
   \begin{equation}
   \label{EquationInitialValueProblemForDerivative}
   		\left\{
   		\begin{aligned}
   			L_n v &= f_{\alpha,n}, \qquad && x \in \R^3_+, \quad && t \in J; \\
   			v(0) &= \partial^{(0,\alpha_1,\alpha_2,\alpha_3)} S_{\chi,\sigma,\R^3_+,m,\alpha_0}(0,f_{n},u_{0,n}), &&x \in \R^3_+;
   		\end{aligned} \right.
   \end{equation}
   and it is the solution 
   of the linear initial boundary value problem
   \begin{equation}
   \label{EquationInitialBoundaryValueProblemForDerivative}
   		\left\{
   		\begin{aligned}
   			L_n v &= f_{\alpha,n}, \qquad && x \in \R^3_+, \quad && t \in J; \\
   			B v &= \partial^\alpha g_{n}, &&x \in \partial\R^3_+, &&t \in J; \\
   			v(0) &= \partial^{(0,\alpha_1,\alpha_2,0)} S_{\chi,\sigma,\R^3_+,m,\alpha_0}(0,f_{n},u_{0,n}), &&x \in \R^3_+;
   		\end{aligned} \right.
   \end{equation}
   if also  $\alpha_3 = 0$. Here we exploited that $A_3$ and $B$ are constant.
   
   I) To estimate $f_{\alpha,n}$ and $f_{\alpha,n} - f_{\alpha, \infty}$, we introduce the quantity
   \begin{align*}
    h_n(t) &= \sum_{i=1}^3 \sum_{0 \leq j \leq m} \, \sum_{\substack{0 \leq \gamma \leq \alpha, \gamma_0 = 0 \\ |\gamma| = m-j}} \, \sum_{l_1, \ldots, l_{j} = 1}^6 \|(\partial_{y_{l_{j}}} \ldots \partial_{y_{l_1}} \partial_x^{(\gamma_1,\gamma_2,\gamma_3)}\theta_i)(u_n(t)) \nonumber \\
  &\hspace{14em} - (\partial_{y_{l_{j}}} \ldots \partial_{y_{l_1}} \partial_x^{(\gamma_1,\gamma_2,\gamma_3)}\theta_i)(u(t))\|_{L^\infty(\R^3_+)}
   \end{align*}
   for all $t \in \overline{J'}$ and $n \in \N$, where $\theta_1 = \chi$, $\theta_2 = \sigma$, and $\theta_3$ is the matrix inverse of $\chi$, cf. Corollary~\ref{CorollaryEstimateForDifferenceHigherOrder}.
   Recall that $(u_n)_n$ tends to $u$ uniformly as $n \rightarrow \infty$ and that these functions map 
   in the compact set $\mathcal{U}_1$. It follows
   \begin{equation}
    \label{EquationConvergenceToZeroOfhn} 
    h_n(t) \longrightarrow 0 \quad \text{for all } t \in \overline{J'} \qquad \text{and} \qquad  \int_{0}^{T'} h_n^2(t) dt \longrightarrow 0
   \end{equation}
   as $n \rightarrow \infty$. Using Lemma~2.1 of~\cite{SpitzMaxwellLinear} and Corollary~\ref{CorollaryEstimateForDifferenceHigherOrder} 
   we derive the bounds
  \begin{align}
   \label{EquationEstimateForDifferencefalphanfalphainfty}
   &\Ltwoanwgamma{f_{\alpha,n}} \leq C(\chi,\sigma,m,r,\mathcal{U}_1,T'), \nonumber\\
   &\Ltwoanwgamma{f_{\alpha,n} - f_{\alpha,\infty}}^2 = \int_{0}^T \Ltwohn{f_{\alpha,n}(s) - f_{\alpha,\infty}(s)}^2 ds \nonumber\\
   &\leq C(\chi,\sigma,m,r,\mathcal{U}_1,T')\Big(\Han{m}{f_n - f}^2 + \Gnormwg{m-1}{u_n - u}^2 + \delta_{|\alpha| m}\int_0^T h_n^2(s) ds \nonumber\\
   &\hspace{11em} +\int_0^T \sum_{\tilde{\alpha} \in \N_0^4, |\tilde{\alpha}| = m} \Ltwohn{\partial^{\tilde{\alpha}} u_n(s) - \partial^{\tilde{\alpha}} u(s)}^2 ds \Big).
  \end{align}
  Let $|\alpha| \leq m-1$. Using also~\eqref{EquationfalphakpartialkfalphaMinusErrorTerms}, we then obtain
   \begin{align}
   \label{EquationEstimateForDifferencefalphanfalphainftyInG0}
   &\Gnormwg{0}{f_{\alpha,n}} \leq C(\chi,\sigma,m,r,\mathcal{U}_1), \nonumber\\
   &\Gnormwg{0}{f_{\alpha,n} - f_{\alpha,\infty}} \leq \Gnormwg{m-1}{f_{n} - f} \! + \! C(\chi,\sigma,m,r,\mathcal{U}_1) \Gnormwg{m-1}{u_n - u}, \\
   \label{EquationEstimateForDifferencefalphanfalphainftyInH1}
   &\Han{1}{f_{\alpha,n}} \leq C(\chi,\sigma,m,r,\mathcal{U}_1,T'), \nonumber\\
   &\Han{1}{f_{\alpha,n} - f_{\alpha,\infty}}^2 \leq C(\chi,\sigma,m,r,\mathcal{U}_1,T')\Big(\Han{m}{f_n - f}^2 + \Gnormwg{m-1}{u_n - u}^2 \nonumber\\
   &\hspace{1em} + \delta_{|\alpha| m-1} \int_0^T h_n^2(s) ds +\int_0^T \sum_{\tilde{\alpha} \in \N_0^4, |\tilde{\alpha}| = m} \Ltwohn{\partial^{\tilde{\alpha}} u_n(s) - \partial^{\tilde{\alpha}} u(s)}^2 ds \Big).
  \end{align}
  (See~\cite{SpitzDissertation} for further details.)

  II) We now look at the tangential case $\alpha_3 = 0$. To split $\partial^\alpha u_n$, we define the functions
  \begin{align*}
   w_{0,n} = \partial^{(0,\alpha_1,\alpha_2,0)} S_{\chi,\sigma,\R^3_+, m,\alpha_0}(0,f_{n},u_{0,n}),
  \end{align*}
  which belong to $\Ltwoh$ by Lemma~\ref{LemmaNonlinearHigherOrderInitialValues}.
  Consider the linear initial boundary value problems
  \begin{equation}
   \label{EquationInitialBoundaryValueProblemForDerivativeWithFixedRightHandSide}
   		\left\{
   		\begin{aligned}
   			L_n v &= f_{\alpha,\infty}, \qquad && x \in \R^3_+, \quad && t \in J; \\
   			B v &= \partial^\alpha g_\infty, &&x \in \partial\R^3_+, &&t \in J; \\
   			v(0) &= w_{0,\infty}, &&x \in \R^3_+;
   		\end{aligned} \right.
   \end{equation}
   and
   \begin{equation}
   \label{EquationInitialBoundaryValueProblemForDerivativeWithDifferenceRightHandSide}
   		\left\{
   		\begin{aligned}
   			L_n v &= f_{\alpha,n} - f_{\alpha,\infty}, \qquad && x \in \R^3_+, \quad && t \in J; \\
   			B v &= \partial^\alpha g_{n} - \partial^\alpha g_\infty, &&x \in \partial\R^3_+, &&t \in J; \\
   			v(0) &= w_{0,n} - w_{0,\infty}, &&x \in \R^3_+.
   		\end{aligned} \right.
   \end{equation}
   Because of the above regularity statements,
   Theorem~\ref{TheoremExistenceAndUniquenessOnDomain} implies 
   that the problem~\eqref{EquationInitialBoundaryValueProblemForDerivativeWithFixedRightHandSide} has a unique 
   solution $w_n$ in $\G{0}$, the problem~\eqref{EquationInitialBoundaryValueProblemForDerivativeWithDifferenceRightHandSide} 
   has a unique solution $z_n$ in $\G{0}$, and that the sum $w_n + z_n$ uniquely solves~\eqref{EquationInitialBoundaryValueProblemForDerivative}.
   We thus obtain
   \begin{align}
   \label{Equationwnplusznequalspartialalphaun}
    w_n + z_n = \partial^\alpha u_n.
   \end{align}
   We point out that in the case $n = \infty$ the initial boundary value problems~\eqref{EquationInitialBoundaryValueProblemForDerivativeWithFixedRightHandSide} 
   and~\eqref{EquationInitialBoundaryValueProblemForDerivative} coincide. Since the latter is solved by 
   $\partial^\alpha u_n$ and solutions of that problem are unique by Theorem~\ref{TheoremExistenceAndUniquenessOnDomain}, we 
   conclude that
   \begin{align}
   \label{Equationwinftyequalspartialalphau}
    w_\infty = \partial^\alpha u_\infty = \partial^\alpha u.
   \end{align} 
   Since $(u_n)_n$ tends to $u$ uniformly and these functions map into the compact set $\mathcal{U}_1$, 
   we obtain the limits
   \begin{align*}
    \|\chi(u_n) - \chi(u)\|_{L^\infty(\Omega)} + \|\sigma(u_n) - \sigma(u)\|_{L^\infty(\Omega)} \leq  C(\chi,\sigma,\mathcal{U}_1) \Gnormwg{m-1}{u_n - u} \longrightarrow 0
   \end{align*}
   as $n \rightarrow \infty$. In view of~\eqref{EquationBoundForchiunsigmaun},
   Lemma~\ref{LemmaApporixmationInLinearSituationInL2} therefore tells us that
   \begin{align}
    \label{EquationConvergenceOfwnTowinfty}
      \Gnormwg{0}{w_n - \partial^\alpha u} = \Gnormwg{0}{w_n - w_\infty} \longrightarrow 0
   \end{align}
   as $n \rightarrow \infty$. 
   Define $\gamma = \gamma(\chi, \sigma,m, r,\mathcal{U}_1, T') \geq 1$ by
   \begin{align*}
    \gamma = \gamma_{\ref{TheoremExistenceAndUniquenessOnDomain},0}(\eta(\chi), R(\chi, \sigma, m, r,\mathcal{U}_1),T'),
   \end{align*}
   where $\gamma_{\ref{TheoremExistenceAndUniquenessOnDomain}, 0}$ is the corresponding constant from Theorem~\ref{TheoremExistenceAndUniquenessOnDomain}.
   The estimate from this theorem applied to~\eqref{EquationInitialBoundaryValueProblemForDerivativeWithDifferenceRightHandSide}
   then yields
   \begin{align}
   \label{EquationEstimateForzn}
    &\Gnormwg{0}{z_n}^2 \leq e^{2 \gamma T} \Gnorm{0}{z_n}^2 \\
    &\leq C_{0} e^{2\gamma T'} \Big( \Ltwohn{w_{0,n} \! - \! w_{0,\infty}}^2 \!+\! \Enorm{0}{\partial^\alpha g_{n} \! - \! \partial^\alpha g_\infty}^2
     \! + \! \Ltwoan{f_{\alpha,n} \! - \! f_{\alpha,\infty}}^2 \Big) \nonumber
   \end{align}
   where $C_{0}(\chi,\sigma,m,r, \mathcal{U}_1,T')$ is the maximum of the constants $C_0$ and $C_{0,0}$ appearing in 
   Theorem~\ref{TheoremExistenceAndUniquenessOnDomain}.
   Because of~\eqref{EquationBoundForData}, 
   Lemma~\ref{LemmaNonlinearHigherOrderInitialValues} provides a constant
   $C_{\ref{LemmaNonlinearHigherOrderInitialValues}} = C_{\ref{LemmaNonlinearHigherOrderInitialValues}}(\chi,\sigma,m,r,\mathcal{U}_1)$ 
   such that
   \begin{align*}
    &\Ltwohn{w_{0,n} - w_{0,\infty}} \\
    &= \Ltwohn{\partial^{(0,\alpha_1, \alpha_2, 0)} S_{\chi,\sigma,\R^3_+,m,\alpha_0}(0,f_n,u_{0,n}) 
	- \partial^{(0,\alpha_1, \alpha_2, 0)} S_{\chi,\sigma,\R^3_+,m,\alpha_0}(0,f,u_{0})} \\
    &\leq C_{\ref{LemmaNonlinearHigherOrderInitialValues}} 
      \Big( \sum_{j = 0}^{m-1} \Hhn{m-j-1}{\partial_t^j f_n(0) - \partial_t^j f(0)} + \Hhn{m}{u_{0,n} - u_0}\Big)
   \end{align*}
   for all $n \in \N$. Inserting this estimate together with~\eqref{EquationEstimateForDifferencefalphanfalphainfty} 
   into~\eqref{EquationEstimateForzn}, we derive
   \begin{align*}
    \Gnormwg{0}{z_n}^2 &\leq C_{\ref{EquationFinalEstimateForzn}} 
    \Big( \sum_{j = 0}^{m-1} \Hhn{m-j-1}{\partial_t^j f_n(0) - \partial_t^j f(0)}^2 + \Hhn{m}{u_{0,n} - u_0}^2 \\
      & \hspace{4em} + \Enormwg{m}{g_n - g}^2 + \Han{m}{f_n - f}^2 + \Gnormwg{m-1}{u_n - u}^2 \Big) \\
      &\quad +C_{\ref{EquationFinalEstimateForzn}} \int_0^T h_n^2(s) ds +  C_{\ref{EquationFinalEstimateForzn}}  \int_0^T \! \! \sum_{\tilde{\alpha} \in \N_0^4, |\tilde{\alpha}| = m} \! \! \Ltwohn{\partial^{\tilde{\alpha}} u_n(s) - \partial^{\tilde{\alpha}} u(s)}^2 ds, 
   \end{align*}
    for all $n \in \N$, where we introduce a constant $C_{\ref{EquationFinalEstimateForzn}} = C_{\ref{EquationFinalEstimateForzn}}(\chi, \sigma,m,r,\mathcal{U}_1,T')$. 
    We write $a_n'$ for the first 
    part of the above right-hand side. It follows
    \begin{align}
    \label{EquationFinalEstimateForzn}
     \Gnormwg{0}{z_n}^2 \leq a_n' + C_{\ref{EquationFinalEstimateForzn}} \int_0^T \sum_{\tilde{\alpha} \in \N_0^4, |\tilde{\alpha}| = m} \Ltwohn{\partial^{\tilde{\alpha}} u_n(s) - \partial^{\tilde{\alpha}} u(s)}^2 ds,
    \end{align}
    for all $n \in \N$. Observe that $a_n'$ converges to $0$ as $n \rightarrow \infty$ by our assumptions,
    \eqref{EquationConvergenceOffAndTimeDerivativesInZero}, 
    and~\eqref{EquationConvergenceToZeroOfhn}. Formula~\eqref{Equationwnplusznequalspartialalphaun} and 
    inequality~\eqref{EquationFinalEstimateForzn} imply that
    \begin{align}
     \label{EquationForpartialalphaunminuspartialalphauForAlphaWithNormalComponentZero}
     &\Gnormwg{0}{\partial^\alpha u_n - \partial^\alpha u}^2 = \Gnormwg{0}{w_n + z_n - \partial^\alpha u}^2
     \leq 2\Gnormwg{0}{w_n - \partial^\alpha u}^2 + 2 \Gnormwg{0}{z_n}^2 \nonumber\\
     &\leq 2 \Gnormwg{0}{w_n - \partial^\alpha u}^2 + 2 a_n' + 2 C_{\ref{EquationFinalEstimateForzn}} \int_0^T \sum_{\tilde{\alpha} \in \N_0^4, |\tilde{\alpha}| = m} \Ltwohn{\partial^{\tilde{\alpha}} u_n(s) - \partial^{\tilde{\alpha}} u(s)}^2 ds \nonumber\\
     &= a_{\alpha,n} + C_{\ref{EquationForpartialalphaunminuspartialalphauForAlphaWithNormalComponentZero}} 
	\int_0^T \sum_{\tilde{\alpha} \in \N_0^4, |\tilde{\alpha}| = m} \Ltwohn{\partial^{\tilde{\alpha}} u_n(s) - \partial^{\tilde{\alpha}} u(s)}^2 ds,
    \end{align}
    for all $n \in \N$. Here we set $C_{\ref{EquationForpartialalphaunminuspartialalphauForAlphaWithNormalComponentZero}} = C_{\ref{EquationForpartialalphaunminuspartialalphauForAlphaWithNormalComponentZero}}(\chi,\sigma,m,r,\mathcal{U}_1,T')$
    and note that
    \begin{align*}
     a_{\alpha,n}  := 2 \Gnormwg{0}{w_n - \partial^\alpha u}^2 + 2 a_n' \longrightarrow 0
    \end{align*}
    as $n \rightarrow \infty$ by~\eqref{EquationConvergenceOfwnTowinfty}.

    III) We claim that for all multiindices $\alpha \in \N_0^4$ with $|\alpha| = m$ there is a sequence
    $(a_{\alpha,n})_n$ and a constant $C_\alpha = C_\alpha(\chi,\sigma,m,r,\mathcal{U}_1,T')$ 
    such that
    \begin{equation}
    \label{EquationClaimForDifferenceOfAlphaDerivativeunu}
	  \Gnormwg{0}{\partial^\alpha u_n - \partial^\alpha u}^2 \leq a_{\alpha,n} + C_\alpha \int_0^T \sum_{\tilde{\alpha} \in \N_0^4, |\tilde{\alpha}| = m} \Ltwohn{\partial^{\tilde{\alpha}} u_n(s) - \partial^{\tilde{\alpha}} u(s)}^2 ds
    \end{equation}
    for all $n \in \N$ and 
    \begin{align}
     \label{EquationClaimConvergenceToZeroOfaalphan}
     a_{\alpha,n} \longrightarrow 0
    \end{align}
    as $n \rightarrow \infty$. One proves
    this assertion by induction with respect to $\alpha_3$. Observe that step~II) shows the claim for 
    the case $\alpha_3 = 0$.
    In the induction step one assumes that there is 
    an index $l \in \{1, \ldots, m\}$ such that the assertion is true for all $\alpha \in \N_0^4$ with 
    $|\alpha| = m$ and $\alpha_3 = l-1$. Take $\alpha \in \N_0^4$ with $|\alpha| = m$ and $\alpha_3 = l$. 
    We set $\alpha' = \alpha - e_3$.
    
    Unfortunately we cannot directly apply Lemma~\ref{LemmaCentralEstimateInNormalDirection} here, 
    since it was derived for a fixed differential operator. 
    If we apply only one such 
    operator to a difference of solutions we experience the typical loss of derivatives. Therefore, 
    one has to repeat the key step of the proof of Lemma~\ref{LemmaCentralEstimateInNormalDirection} 
    for the operators $L_n$ and the difference 
    $\partial^{\alpha'}u_n - \partial^{\alpha'} u$. 
    In this calculation we use results from step~II) such as estimate~\eqref{EquationEstimateForDifferencefalphanfalphainftyInH1}.
    Since in this very lengthy reasoning essentially the same arguments are employed as in~\cite{SpitzMaxwellLinear}, 
    we decided to omit these calculations here. The details can be found in steps~III) to~V) of the proof  
    of Lemma~7.22 in~\cite{SpitzDissertation}.
    	
	We define $a_{n}$ and $C_m = C_m(\chi,\sigma,r,\mathcal{U}_1,T')$ by
	\begin{align*}
		a_n&= \sum_{\tilde{\alpha} \in \N_0^4, |\tilde{\alpha}| = m} a_{\tilde{\alpha},n},\hspace{4em}
		C_m = \sum_{\tilde{\alpha} \in \N_0^4, |\tilde{\alpha}| = m} C_{\tilde{\alpha}},
	\end{align*}
	for all $n \in \N$. Summing~\eqref{EquationClaimForDifferenceOfAlphaDerivativeunu} over 
	all multiindices $\alpha \in \N_0^4$ with $|\alpha| = m$, we then derive
	\begin{align*}
		&\sum_{\tilde{\alpha} \in \N_0^4, |\tilde{\alpha}| = m} \Ltwohn{\partial^{\tilde{\alpha}} u_n(T) - \partial^{\tilde{\alpha}} u(T)}^2 \leq \sum_{\tilde{\alpha} \in \N_0^4, |\tilde{\alpha}| = m} \Gnormwg{0}{\partial^{\tilde{\alpha}} u_n - \partial^{\tilde{\alpha}} u}^2 \\
		&\leq a_{n} + C_m \int_0^T \sum_{\tilde{\alpha} \in \N_0^4, |\tilde{\alpha}| = m} \Ltwohn{\partial^{\tilde{\alpha}} u_n(s) - \partial^{\tilde{\alpha}} u(s)}^2 ds 
	\end{align*}
	for all $n \in \N$. Since $T \in (0,T']$ was arbitrary, Gronwall's lemma shows that
	\begin{align*}
		\sum_{\tilde{\alpha} \in \N_0^4, |\tilde{\alpha}| = m} \Ltwohn{\partial^{\tilde{\alpha}} u_n(T) - \partial^{\tilde{\alpha}} u(T)}^2 \leq a_{n} e^{C_m T}
	\end{align*}
	for all $T \in [0,T']$ and $n \in \N$. As $(a_n)_n$ converges to $0$ due to~\eqref{EquationClaimConvergenceToZeroOfaalphan}, we finally arrive at
	\begin{align*}
		\sum_{\tilde{\alpha} \in \N_0^4, |\tilde{\alpha}| = m} \|\partial^{\tilde{\alpha}} u_n - \partial^{\tilde{\alpha}} u\|_{G_0(J' \times \R^3_+)}^2 \leq a_{n} e^{C_m T'} \longrightarrow 0
	\end{align*}
	as $n \rightarrow \infty$. Since $\|u_n - u\|_{G_{m-1}(J' \times \R^3_+)}$ tends to zero
	as $n \rightarrow \infty$ by assumption, 
	we conclude that $(u_n)_n$ converges to $u$ in $G_{m}(J' \times \R^3_+)$.    
  \end{proof}
  
  Finally, we can prove the full local wellposedness theorem. 
  In the following we will write $B_{M}(x,r)$ for the ball of radius $r$ around a point $x$ from a
  metric space~$M$. For times $t_0 < T$ we further set 
  \begin{align*}
     &M_{\chi,\sigma,m}(t_0,T) = \{(\tilde{f},\tilde{g}, \tilde{u}_0) \in H^m((t_0,T) \times G) \times E_m((t_0,T) \times \partial G) \times \Hhdom{m} \colon \\
	&\hspace{15.5em} (\chi, \sigma, t_0,B, \tilde{f}, \tilde{g}, \tilde{u}_0) \text{ is compatible of order } m\}, \\
      &d((\tilde{f}_1, \tilde{g}_1, \tilde{u}_{0,1}), (\tilde{f}_2, \tilde{g}_2, \tilde{u}_{0,2})) \nonumber \\
      & = \max \{\|\tilde{f}_1 - \tilde{f}_2 \|_{H^m((t_0,T) \times G)}, \|\tilde{g}_1 - \tilde{g}_2\|_{E_m((t_0, T) \times \partial G)}, \Hhndom{m}{\tilde{u}_{0,1} - \tilde{u}_{0,2}}\}.
    \end{align*}

  \begin{theorem}
   \label{TheoremLocalWellposednessNonlinear}
   Let $m \in \N$ with $m \geq 3$ and fix $t_0 \in \R$. Take functions $\chi \in \mlpdwl{m,6}{G}{\mathcal{U}}{c}$ and 
   $\sigma \in \mlwl{m,6}{G}{\mathcal{U}}{c}$ and set
   \begin{align*}
   	B(x) = \begin{pmatrix}
             0 &\nu_3(x) &-\nu_2(x) &0 &0 &0 \\
      -\nu_3(x) &0 &\nu_1(x) &0 &0 &0 \\
      \nu_2(x) &-\nu_1(x) &0 &0 &0 &0 
            \end{pmatrix}
   \end{align*}
   for all $x \in \partial G$, where $\nu$ denotes the unit outer normal vector of $\partial G$.
   Choose data $u_0 \in \Hhdom{m}$, $g \in E_m((-T,T) \times \partial G)$, 
   and $f \in H^m((-T,T) \times G)$ 
   for all $T > 0$ such that $\overline{\image u_0} \subseteq \mathcal{U}$ and 
   the tuple $(\chi,\sigma,t_0,B,f,g,u_0)$ fulfills the compatibility 
   conditions~\eqref{EquationNonlinearCompatibilityConditions}
   of order $m$.
   For the maximal existence times from Definition~\ref{DefinitionMaximalIntervalOfExistence}
   we then have
   \begin{align*}
    T_+ &= T_+(m, t_0, f,g, u_0) = T_+(k, t_0, f,g, u_0), \\
    T_{-} &= T_{-}(m, t_0, f,g, u_0) = T_{-}(k, t_0, f,g, u_0)
   \end{align*}
   for all $k \in \{3, \ldots, m\}$.    
   The following assertions are true.
   \begin{enumerate}[leftmargin = 2em]
    \item \label{ItemUniqueSolution} There exists a unique maximal solution $u$ of~\eqref{EquationNonlinearIBVP}
    which belongs to the function space $\bigcap_{j = 0}^m C^{j}((T_{-}, T_+), \Hhdom{m-j})$.
    \item \label{ItemBlowUp} If $T_+ < \infty$, then
	\begin{enumerate}
	 \item \label{ItemLeavingEveryCompactSet} the solution $u$ leaves every compact subset of $\mathcal{U}$, or
	 \item \label{ItemWaveBreaking} $\limsup_{t \nearrow T_+} \|\nabla u(t)\|_{L^{\infty}(G)} = \infty$.
	\end{enumerate}
	The analogous result holds for $T_{-}$.
    \item \label{ItemContinuousDependence} 
    Let $T' \in (t_0, T_+)$. Then there is a number 
    $\delta > 0$ such that for all data $\tilde{f} \in H^m((t_0,T_+) \times G)$, $\tilde{g} \in E_m((t_0, T_+) \times \partial G)$, 
    and $\tilde{u}_0 \in \Hhdom{m}$ 
    fulfilling
    \begin{equation*}
    \|\tilde{f} - f\|_{H^m((t_0,T_+)\times G)} < \delta, \quad
    \|\tilde{g} - g\|_{E_m((t_0, T_+) \times \partial G)} < \delta,\quad  \Hhn{m}{\tilde{u}_0 - u_0} < \delta
    \end{equation*}
    and the compatibility conditions~\eqref{EquationNonlinearCompatibilityConditions} of order $m$, 
    the maximal existence time satisfies $T_+(m, t_0, \tilde{f},\tilde{g}, \tilde{u}_0) > T'$. 
    We denote by $u(\cdot; \tilde{f}, \tilde{g}, \tilde{u}_0)$ the corresponding maximal 
    solution of~\eqref{EquationNonlinearIBVP}.
    The flow map
    \begin{align*}
     \Psi\colon B_{M_{\chi,\sigma,m}(t_0, T_+)}((f,g,u_0), \delta)  &\rightarrow G_m((t_0,T') \times G) , \\
     (\tilde{f},\tilde{g}, \tilde{u}_0) &\mapsto u(\cdot; \tilde{f}, \tilde{g}, \tilde{u}_0), 
    \end{align*}
    is continuous. Moreover, there is a constant 
    $C = C(\chi, \sigma,m,r,T_+ - t_0, \kappa_0)$ such that
    \begin{align}
     \label{EquationLipschitzEstimateInContinuousDependenceAssertion}
     &\|\Psi(\tilde{f}_1, \tilde{g}_1, \tilde{u}_{0,1}) - \Psi(\tilde{f}_2, \tilde{g}_2, \tilde{u}_{0,2})\|_{G_{m-1}((t_0,T') \times G)} \nonumber \\
     &\leq C \sum_{j = 0}^{m-1} \Hhndom{m-j-1}{\partial_t^j \tilde{f}_1(t_0) - \partial_t^j \tilde{f}_2(t_0)} + C \|\tilde{g}_1 - \tilde{g}_2\|_{E_{m-1}((t_0,T') \times \partial G)} \nonumber\\
     &\qquad + C \Hhndom{m}{\tilde{u}_{0,1} - \tilde{u}_{0,2}} +C \| \tilde{f}_1 - \tilde{f}_2 \|_{H^{m-1}((t_0,T') \times G)}
    \end{align}
    for all $(\tilde{f}_1, \tilde{g}_1, \tilde{u}_{0,1}), (\tilde{f}_2, \tilde{g}_2, \tilde{u}_{0,2}) \in B_{M_{\chi,\sigma,m}(t_0, T_+)}((f,g,u_0), \delta)$, 
    where $\kappa_0 = \dist(\image u_0, \partial \mathcal{U})$.
    The analogous result is true for $T_{-}$.
   \end{enumerate}
  \end{theorem}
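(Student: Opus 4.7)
\noindent\textit{Proof proposal.} All three assertions will be assembled from ingredients already in place: Proposition~\ref{PropositionMaximalExistence} and Lemma~\ref{LemmaBlowUpCriterion} for the basic maximal-solution framework, Proposition~\ref{PropositionEstimateForNonlinearSolutionOnDomain} for persistence of regularity under a bounded Lipschitz norm, Theorem~\ref{TheoremExistenceAndUniquenessOnDomain} together with Corollary~\ref{CorollaryEstimateForDifferenceHigherOrder} for the linearized $G_{m-1}$-estimate, and Lemma~\ref{LemmaConvergenceOfNonlinearSolutionInGmProvidedConvergenceInGmminus1} to upgrade $G_{m-1}$-convergence to $G_m$-convergence.

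To establish the independence of $T_{\pm}$ from $m$, the inequality $T_+(m) \leq T_+(k)$ is immediate from the definition; for the converse I would assume $T_+(m) < T_+(k)$, note via Lemma~\ref{LemmaUniquenessOfNonlinearSolution} that the $G_k$-solution coincides with the $G_m$-solution on their common domain, and deduce that the image stays inside a compact subset of $\mathcal{U}$ and the Lipschitz norm stays bounded on $[t_0, T_+(m)]$. Proposition~\ref{PropositionEstimateForNonlinearSolutionOnDomain} then yields a uniform bound on $\|u(t)\|_{\Hhdom{m}}$ as $t \nearrow T_+(m)$, contradicting Lemma~\ref{LemmaBlowUpCriterion}. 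Assertion~\ref{ItemUniqueSolution} is then exactly Proposition~\ref{PropositionMaximalExistence}. For assertion~\ref{ItemBlowUp}, I would suppose $T_+ < \infty$ with both alternatives failing, so that $\image u(t) \subseteq \mathcal{U}_1$ for some compact $\mathcal{U}_1 \subseteq \mathcal{U}$ and $\|\nabla u(t)\|_{L^\infty(G)}$ is bounded on $[t_0, T_+)$. The evolution equation~\eqref{EquationNonlinearIBVP} then controls $\|\partial_t u\|_{L^\infty}$ via the uniform bound on $\chi(u)^{-1}$, giving full control of $\|u\|_{W^{1,\infty}}$, and Proposition~\ref{PropositionEstimateForNonlinearSolutionOnDomain} again contradicts Lemma~\ref{LemmaBlowUpCriterion}. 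The $T_-$-case follows by time reversal (Remark~\ref{RemarkNegativeTimes}).

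For assertion~\ref{ItemContinuousDependence} the plan is to prove~\eqref{EquationLipschitzEstimateInContinuousDependenceAssertion} first and then deduce $G_m$-continuity from Lemma~\ref{LemmaConvergenceOfNonlinearSolutionInGmProvidedConvergenceInGmminus1}. For two perturbed data tuples, let $u_1, u_2$ be the corresponding maximal solutions and $w = u_1 - u_2$. Then $w$ solves the linear problem~\eqref{EquationIBVPIntroduction} with operator $L(\chi(u_1), A_1^{\operatorname{co}}, A_2^{\operatorname{co}}, A_3^{\operatorname{co}}, \sigma(u_1))$ and inhomogeneity
\begin{equation*}
F = \tilde{f}_1 - \tilde{f}_2 + (\chi(u_2) - \chi(u_1)) \partial_t u_2 + (\sigma(u_2) - \sigma(u_1)) u_2,
\end{equation*}
with boundary data $\tilde{g}_1 - \tilde{g}_2$ and initial value $\tilde{u}_{0,1} - \tilde{u}_{0,2}$. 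The linear compatibility conditions of order $m-1$ for this problem follow from $\partial_t^p w(t_0) = \partial_t^p u_1(t_0) - \partial_t^p u_2(t_0)$ via~\eqref{EquationTimeDerivativesOfSolutionEqualSChiSigmaG} and the boundary conditions of the $u_i$. A continuation argument based on the embedding $G_{m-1} \hookrightarrow W^{1,\infty}(J \times G)$ (valid since $m - 1 \geq 2$) shows that for $\delta$ sufficiently small, $u_2$ exists on $[t_0, T']$ with image in a compact subset of $\mathcal{U}$ and uniformly bounded Lipschitz norm, as a consequence of assertion~\ref{ItemBlowUp}. Applying Theorem~\ref{TheoremExistenceAndUniquenessOnDomain} to $w$ at level $m-1$, estimating $F$ in $G_{m-1}$ via Corollary~\ref{CorollaryEstimateForDifferenceHigherOrder}~\ref{ItemDifferenceInG} together with the product estimate from~\cite[Lemma~2.1]{SpitzMaxwellLinear} by a quantity of the form $C(r) \Gdomnormwg{m-1}{w}$, and handling the time derivatives of $F$ at $t_0$ through Lemma~\ref{LemmaNonlinearHigherOrderInitialValues} (applied to the compositions $\chi(u_i)$ and $\sigma(u_i)$), I would then pick $\gamma$ so large that the prefactor $C_m/\gamma$ in the bound of Theorem~\ref{TheoremExistenceAndUniquenessOnDomain} absorbs the self-referential contribution; a Gronwall step then closes~\eqref{EquationLipschitzEstimateInContinuousDependenceAssertion}.

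Given a sequence $(\tilde{f}_n, \tilde{g}_n, \tilde{u}_{0,n}) \to (f, g, u_0)$ in $H^m \times E_m \times \Hhdom{m}$, the just-established Lipschitz estimate gives $u_n \to u$ in $G_{m-1}((t_0, T') \times G)$, while Proposition~\ref{PropositionEstimateForNonlinearSolutionOnDomain} yields a uniform $G_m$-bound on $(u_n)_n$ by means of the uniform Lipschitz control; Lemma~\ref{LemmaConvergenceOfNonlinearSolutionInGmProvidedConvergenceInGmminus1} then provides the desired continuity of $\Psi$ into $G_m$. The main obstacle I anticipate is closing the $G_{m-1}$-Lipschitz estimate without losing a derivative from the factor $\partial_t u_2$ in $(\chi(u_2)-\chi(u_1))\partial_t u_2$: the key point is that once the difference of compositions is measured in $G_{m-1}$ via Corollary~\ref{CorollaryEstimateForDifferenceHigherOrder}~\ref{ItemDifferenceInG}, the product stays within $G_{m-1}$ because $u_1, u_2 \in G_m$, and the absorbing $\gamma^{-1}$-factor in the linear a priori bound is exactly what allows the self-referential term to be moved to the left-hand side.
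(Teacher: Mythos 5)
Your plan reproduces the paper's structure for the $m$-independence of $T_\pm$, assertion~\ref{ItemUniqueSolution}, and assertion~\ref{ItemBlowUp}; your handling of the linearized difference estimate (applying $L(\chi(u_1),\ldots,\sigma(u_1))$ to $w=u_1-u_2$, estimating the resulting inhomogeneity $F$ via Corollary~\ref{CorollaryEstimateForDifferenceHigherOrder} and the product estimate, treating $\partial_t^jF(t_0)$ through Lemma~\ref{LemmaNonlinearHigherOrderInitialValues}, and absorbing the self-referential term with the $\gamma^{-1}$-factor from Theorem~\ref{TheoremExistenceAndUniquenessOnDomain}) also matches the paper's step~I almost verbatim, and your observation that Lemma~\ref{LemmaConvergenceOfNonlinearSolutionInGmProvidedConvergenceInGmminus1} upgrades $G_{m-1}$-convergence to $G_m$-convergence is exactly the paper's strategy.

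The one genuine gap is in the claim that a ``continuation argument based on the embedding $G_{m-1}\hookrightarrow W^{1,\infty}$ \ldots as a consequence of assertion~\ref{ItemBlowUp}'' shows that perturbed solutions exist on $[t_0,T']$. As stated, this is circular: the constant in the $G_{m-1}$-Lipschitz estimate~\eqref{EquationDifferenceOfNonlinearSolutionsInGmminus1} depends on a $G_m$-bound $R'$ for the perturbed solution, so you cannot apply the estimate uniformly on $[t_0,T')$ until you have controlled $\|u_2\|_{G_m}$; Proposition~\ref{PropositionEstimateForNonlinearSolutionOnDomain} would provide such a bound from a Lipschitz bound, but that Lipschitz bound was to come from the difference estimate whose constant is the object you are trying to control. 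A careful continuation-set argument (bootstrap on a fixed threshold, say $\|u_2\|_{G_m}\leq 2R_0$, plus an openness step combining the Lipschitz estimate and Proposition~\ref{PropositionEstimateForNonlinearSolutionOnDomain}) can indeed break this circle, but you don't spell it out, and the shrinking of $\delta$ must be done consistently so that the $G_m$-bound never escapes the threshold. The paper side-steps the issue entirely with an explicit iterative scheme: it fixes the time step $\tau=\tau(\chi,\sigma,m,\tilde T,4r,\kappa)$ from Theorem~\ref{TheoremLocalExistenceNonlinear}, partitions $[t_0,T']$ into $N$ steps of length $\tau$, and inductively shrinks $\delta_j$ so that at each step Theorem~\ref{TheoremLocalExistenceNonlinear} (not Proposition~\ref{PropositionEstimateForNonlinearSolutionOnDomain}) supplies the a~priori $G_m$-bound $R$ before the Lipschitz estimate and Lemma~\ref{LemmaConvergenceOfNonlinearSolutionInGmProvidedConvergenceInGmminus1} are invoked to re-seed the next step with an initial value of norm $<2r$ and distance $>\kappa$ from $\partial\mathcal U$. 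This makes the order of quantifiers explicit and is where the real work in assertion~\ref{ItemContinuousDependence} lies; if you pursue your version, you should make the bootstrap and the dependence of constants precise.
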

  
  \begin{proof}
  We show the assertion for $T_+$, the proofs for $T_{-}$ are analogous.
    Let $k \in \{3,\ldots, m-1\}$. We have $T_+ = T_+(m, t_0, f,g, u_0) \leq T_+(k, t_0, f,g, u_0)$ by definition. 
    Assume now that $T_+ < T_+(k, t_0, f,g, u_0)$. Then $T_+ < \infty$ and the maximal $\Hhdom{m}$-solution $u$ of~\eqref{EquationNonlinearIBVP}, 
    which exists on $(t_0, T_+)$, can be extended to a $\Hhdom{k}$-solution on $(t_0, T_+(k,t_0,f,g,u_0))$ by the 
    definition of the maximal existence time and Lemma~\ref{LemmaUniquenessOfNonlinearSolution}. 
    It follows that
    \begin{equation}
    \label{EquationProofMaximalExistenceTimeIndpendentOfmPositiveDistance}
     \sup_{t \in (t_0, T_+)} \Hhndom{k}{u(t)} < \infty \quad \text{and} \quad  \liminf_{t \nearrow T_+} \dist(\{u(t,x) \colon x \in G\}, \partial \mathcal{U}) > 0.
    \end{equation}
    Sobolev's embedding thus implies
    \begin{align*}
     \omega_0 := \sup_{t \in (t_0, T_+)} \|u(t)\|_{W^{1,\infty}(G)} < \infty.
    \end{align*} 
    Pick a radius $\rho > 0$ such that
    \begin{align*}
     \sum_{j = 0}^{m-1} \Hhndom{m-j-1}{\partial_t^j f(t_0)} \! + \! \|g\|_{E_m((t_0, T_+) \times \partial G)} \! + \! \Hhndom{m}{u_0} \! + \! \|f\|_{H^m((t_0,T_+) \times G)} < \rho.
    \end{align*}
    Due to~\eqref{EquationProofMaximalExistenceTimeIndpendentOfmPositiveDistance} and the boundedness of $u$ 
    there 
    is a compact subset $\mathcal{U}_1$ of $\mathcal{U}$ such that 
    $\image u(t) \subseteq \mathcal{U}_1$ for all $t \in [t_0, T_+]$.
    Proposition~\ref{PropositionEstimateForNonlinearSolutionOnDomain} then yields the bound
    \begin{align*}
     \sup_{t \in (t_0, T_+)}\Hhndom{m}{u(t)}^2 \leq C_{\ref{PropositionEstimateForNonlinearSolutionOnDomain}}(\chi, \sigma, m, \rho, \omega_0, \mathcal{U}_1,T_+ - t_0) \cdot C \rho^2.
    \end{align*}
    But by Lemma~\ref{LemmaBlowUpCriterion} and~\eqref{EquationProofMaximalExistenceTimeIndpendentOfmPositiveDistance} 
    we have $\lim_{t \nearrow T_+} \Hhn{m}{u(t)} = \infty$ and 
    thus a contradiction. We conclude that $T_+(k, t_0, f,g, u_0) = T_+$. 

  Assertion~\ref{ItemUniqueSolution} is just Proposition~\ref{PropositionMaximalExistence}.
  To show~\ref{ItemBlowUp}, assume that $T_+ < \infty$ and that properties~\ref{ItemLeavingEveryCompactSet} 
  and~\ref{ItemWaveBreaking} do not hold.
    We then have
    \begin{align*}
     \omega_0 := \sup_{t \in (t_0, T_+)} \|u(t)\|_{W^{1,\infty}(G)} < \infty
    \end{align*}
    and there is a compact subset $\mathcal{U}_1$ of $\mathcal{U}$ such that $\image u(t) \subseteq \mathcal{U}_1$ 
    for all $t \in [t_0, T_+]$.
    We apply Proposition~\ref{PropositionEstimateForNonlinearSolutionOnDomain} with $T^* = T_+$ again to deduce
    \begin{align*}
     \Hhn{m}{u(t)}^2 \leq C_{\ref{PropositionEstimateForNonlinearSolutionOnDomain}}(\chi,\sigma,m,r,\omega_0,\mathcal{U}_1,  T_+-t_0) \cdot C r^2
    \end{align*}
    for all $t \in (t_0, T_+)$ and thus $\sup_{t \in (t_0, T_+)} \Hhn{m}{u(t)} < \infty$. Lemma~\ref{LemmaBlowUpCriterion} however shows that $\lim_{t \nearrow T_+} \Hhndom{3}{u(t)} = \infty$.
    We thus obtain a contradiction.
    
   \ref{ItemContinuousDependence} 
   Let $T' \in (t_0, T_+)$.
   Without loss of generality we assume that $\chi$ and $\sigma$ satisfy~\eqref{EquationPropertyForL2}, 
   cf.~Remark~\ref{RemarkChiuInFm}.
	The difficulty in assertion~\ref{ItemContinuousDependence} is to make sure that the solutions to the data in the neighborhood we have to construct 
	exist at least till $T'$. To that purpose we use an iterative scheme that allows us to apply Theorem~\ref{TheoremLocalExistenceNonlinear} with 
	the same minimal time step size in each iteration. 
   
   Recall that by Sobolev's embedding there is a constant depending only on the length of the interval $[t_0, T_+)$ 
    such that
    \begin{align}
    \label{EquationSobolevInequalityOnMaximalInterval}
     \|\tilde{f}\|_{G_{m-1}((t_0, T_+) \times G)} \leq C_S \|\tilde{f}\|_{H^m((t_0, T_+) \times G)}
    \end{align}
    for all $\tilde{f} \in H^m((t_0, T_+) \times G)$. Fix a time $T^* \in (T', T_+)$.
   We pick two radii $0 < r_0 < r < \infty$ such that
   \begin{align}
   \label{EquationChoiceForrAndr0InContinuourDependance}
    &\Hhndom{m}{u_0} + \|f\|_{G_{m-1}((t_0,T_+) \times G)} + \|f\|_{H^m((t_0,T_+) \times G)} < r_0, \nonumber\\
    &C_S m r_0 < r, \quad \text{and} \quad \|u\|_{G_m((t_0, T^*) \times G)} < r.
   \end{align}
   Moreover, there is a compact subset $\mathcal{U}_1$ of $\mathcal{U}$ such that $\image u(t) \subseteq \mathcal{U}_1$ 
   for all $t \in [t_0, T^*]$.
   Lemma~\ref{LemmaHigherOrderChainRule} thus provides a number $\tilde{r} = \tilde{r}(\chi,\sigma,m,r, \mathcal{U}_1)$ 
   with 
   \begin{align}
   \label{EquationPreparationToApplyTheoremAPrioriEstimates}
    &\|\chi(u)\|_{F_m((t_0, T^*) \times G)} + \|\sigma(u)\|_{F_m((t_0, T^*) \times G)} \leq \tilde{r}, \nonumber\\
    &\max\{\Fvarnormdom{m-1}{\chi(u)(t_0)}, \max_{1 \leq j \leq m-1} \Hhndom{m-j-1}{\partial_t^j \chi(u)(t_0)}\} \leq \tilde{r}, \nonumber\\
    &\max\{\Fvarnormdom{m-1}{\sigma(u)(t_0)}, \max_{1 \leq j \leq m-1} \Hhndom{m-j-1}{\partial_t^j \sigma(u)(t_0)}\} \leq \tilde{r}.
   \end{align}

    I) Let $t' \in (t_0, T^*)$ and $(\tilde{f}, \tilde{g}, \tilde{u}_0) \in M_{\chi,\sigma,m}(t_0,T_+)$.
    Assume that the solution $\tilde{u}$ of~\eqref{EquationNonlinearIBVP} with data $\tilde{f}$, 
    $\tilde{g}$,  $\tilde{u}_0$ exists on $[t_0, t']$ and thus belongs to $G_m((t_0,t') \times G)$. Pick 
    a radius $R'$ and a compact subset $\tilde{\mathcal{U}}_1$ of $\mathcal{U}$ such that 
    $\|\tilde{u}\|_{G_m((t_0,t') \times G)} \leq R'$ and $\image u(t), \image \tilde{u}(t) \subseteq \tilde{\mathcal{U}}_1$ 
    for all $t \in [t_0, t']$. Set $\tilde{T} = T_+ - t_0$. 
    We first show the inequality
    \begin{align}
     \label{EquationDifferenceOfNonlinearSolutionsInGmminus1}
     &\|\tilde{u} - u\|_{G_{m-1}((t_0,t') \times G)}^2 \leq C \| \tilde{f} - f \|_{H^{m-1}((t_0,t') \times G)}^2 + C \| \tilde{g} - g\|_{E_{m-1}((t_0, t') \times \partial G)}\nonumber\\
     &\quad + C \Big( \sum_{j = 0}^{m-1} \Hhndom{m-j-1}{\partial_t^j \tilde{f}(t_0) - \partial_t^j f(t_0)}^2 + \Hhndom{m}{\tilde{u}_0 - u_0}^2 \Big),
    \end{align}
    for a constant $C = C(\chi,\sigma,m,r,R',\tilde{\mathcal{U}}_1, \tilde{T})$.
    To this aim, we apply the linear differential operator $L = L(\chi(u), A_1^{\operatorname{co}}, A_2^{\operatorname{co}}, A_3^{\operatorname{co}}, \sigma(u))$ to $\tilde{u} - u$.
    We obtain
    \begin{align*}
     L(\tilde{u} - u) = \tilde{f} + (\chi(u) - \chi(\tilde{u})) \partial_t \tilde{u} 
	+ (\sigma(u) - \sigma(\tilde{u})) \tilde{u} - f =: F.
    \end{align*}
    Lemma~\ref{LemmaHigherOrderChainRule} and \cite[Lemma~2.1]{SpitzMaxwellLinear} show that 
    $F$ is an element of $H^{m-1}((t_0,t') \times G)$. Set
    \begin{align*}
      \gamma_0 = \gamma_0(\chi, \sigma,  m, r, \mathcal{U}_1, \tilde{T}) = \gamma_{\ref{TheoremExistenceAndUniquenessOnDomain};0}(\eta(\chi),  \tilde{r}, \tilde{T}) \geq 1,
    \end{align*}
    where $\chi \geq \eta(\chi) > 0$ and $\gamma_{\ref{TheoremExistenceAndUniquenessOnDomain};0}$ is the 
    corresponding constant from Thereom~\ref{TheoremExistenceAndUniquenessOnDomain}. This theorem then yields
    \begin{align}
    \label{EquationAPrioriEstimatesAppliedToNonlinearDifference}
     &\|\tilde{u} - u\|_{G_{m-1,\gamma}((t_0,t')\times G)}^2 \nonumber\\
     &\leq (C_{\ref{TheoremExistenceAndUniquenessOnDomain};m,0} + \tilde{T} C_{\ref{TheoremExistenceAndUniquenessOnDomain};m}) 
	e^{m C_{\ref{TheoremExistenceAndUniquenessOnDomain};1} \tilde{T}} \Big(\sum_{j = 0}^{m-2} \Hhndom{m-2-j}{\partial_t^j F(t_0)}^2 
	 + \Hhndom{m-1}{\tilde{u}_0 - u_0}^2 \nonumber \\
	 &\hspace{2em} + \|\tilde{g} - g\|_{E_{m-1,\gamma}((t_0, t') \times \partial G)}^2 \Big) + \frac{C_{\ref{TheoremExistenceAndUniquenessOnDomain};m}}{\gamma} \|F\|_{H^{m-1}_\gamma((t_0, t') \times G)}^2
    \end{align}
    for all $\gamma \geq \gamma_0$, where 
    $C_{\ref{TheoremExistenceAndUniquenessOnDomain};m,0} = C_{\ref{TheoremExistenceAndUniquenessOnDomain};m,0}(\eta(\chi),   \tilde{r})$, $C_{\ref{TheoremExistenceAndUniquenessOnDomain};m} = C_{\ref{TheoremExistenceAndUniquenessOnDomain};m}(\eta(\chi), \tilde{r}, \tilde{T})$, and $C_{\ref{TheoremExistenceAndUniquenessOnDomain};1} = C_{\ref{TheoremExistenceAndUniquenessOnDomain};1}(\eta(\chi), \tilde{r}, \tilde{T})$
    are the corresponding constants from Theorem~\ref{TheoremExistenceAndUniquenessOnDomain}. We next apply Lemma~2.1~(2) 
    from~\cite{SpitzMaxwellLinear}
    and then Corollary~\ref{CorollaryEstimateForDifferenceHigherOrder}
    to obtain
    \begin{align}
    \label{EquationEstimateForBigFInHmminus1}
     &\|F\|_{H^{m-1}_\gamma((t_0, t') \times G)}^2 \\
     &\leq  C \|\tilde{f} - f\|_{H_{\gamma}^{m-1}((t_0,t') \times G)}^2 \! + \! C \tilde{T} \| \chi(\tilde{u}) - \chi(u)\|_{G_{m-1,\gamma}((t_0,t') \times G)}^2 \|\partial_t \tilde{u}\|_{G_{m-1}((t_0,t') \times G)}^2  \nonumber\\
     &\hspace{3em} + C \tilde{T} \| \sigma(\tilde{u}) - \sigma(u)\|_{G_{m-1,\gamma}((t_0,t') \times G)}^2 \| \tilde{u}\|_{G_{m-1}((t_0,t') \times G)}^2 \nonumber\\
     &\leq C  \|\tilde{f} - f\|_{H^{m-1}_{\gamma}((t_0,t') \times G)}^2 +C(\chi,\sigma,m,r,R', \tilde{\mathcal{U}}_1, \tilde{T}) \|\tilde{u} - u\|_{G_{m-1,\gamma}((t_0,t')\times G)}^2. \nonumber
    \end{align}
    Let $j \in \{0, \ldots, m-2\}$. To treat $\partial_t^j F(t_0)$, we employ Lemma~\ref{LemmaHigherOrderChainRule} and the definition of the $M_k^l$ in~\eqref{EquationDefinitionMkp}. The same arguments as in the proof of
        Lemma~\ref{LemmaNonlinearHigherOrderInitialValues} then show that
    \begin{align}
    \label{EquationEstimateForBigFInt0}
     &\Hhndom{m-2-j}{\partial_t^j F(t_0)} \leq \Hhndom{m-1-j}{\partial_t^j F(t_0)} \\
     &\leq C(\chi,\sigma,m,r, R', \tilde{\mathcal{U}}_1) \Big( \sum_{l = 0}^{m-1} \Hhndom{m-l-1}{\partial_t^l f(t_0) - \partial_t^l \tilde{f}(t_0)} + \Hhndom{m}{u_0 - \tilde{u}_0} \Big). \nonumber
    \end{align}
    We obtain a constant
    $C_{\ref{EquationIntermediateStepForNonlinearDifference}} = C_{\ref{EquationIntermediateStepForNonlinearDifference}}(\chi,\sigma,m,r,R',\tilde{\mathcal{U}}_1,\tilde{T})$
    and the bound
    \begin{align}
     \label{EquationIntermediateStepForNonlinearDifference}
     &\|\tilde{u} - u\|_{G_{m-1,\gamma}((t_0,t')\times G)}^2 \\
     &\leq C_{\ref{EquationIntermediateStepForNonlinearDifference}} \Big( \frac{1}{\gamma} \|\tilde{u} - u\|_{G_{m-1,\gamma}((t_0,t')\times \G)}^2 +\sum_{l = 0}^{m-1} \Hhn{m-l-1}{\partial_t^l \tilde{f}(t_0) - \partial_t^l f(t_0)}^2 \nonumber\\
     &\hspace{2em} + \|\tilde{g} - g\|_{E_{m-1,\gamma}((t_0,t') \times \partial G)}^2 + \Hhn{m}{\tilde{u}_0 - u_0}^2 +  \|\tilde{f} - f\|_{H^{m-1}_{\gamma}((t_0,t') \times G)}^2 \Big) \nonumber
    \end{align}
    for all $\gamma \geq \gamma_0$ by inserting~\eqref{EquationEstimateForBigFInHmminus1} and~\eqref{EquationEstimateForBigFInt0} 
    into~\eqref{EquationAPrioriEstimatesAppliedToNonlinearDifference}. We next fix a number $\gamma = \gamma(\chi,\sigma,m,r,R', \tilde{\mathcal{U}}_1,\tilde{T})$ with 
    $\gamma \geq \gamma_0$ and $C_{\ref{EquationIntermediateStepForNonlinearDifference}} \frac{1}{\gamma} \leq \frac{1}{2}$ to infer that~\eqref{EquationDifferenceOfNonlinearSolutionsInGmminus1} is true.
    
    II) Recall that $\mathcal{U}_1$ is a compact subset of $\mathcal{U}$ such that $\image u(t) \subseteq \mathcal{U}_1$ 
    for all $t \in [t_0, T^*]$. Pick a number $\kappa$ such that 
    \begin{equation}
    \label{EquationFixingKappaDistanceInContinuousDependance}
      2 \kappa < \dist(\mathcal{U}_1, \partial \mathcal{U}).
    \end{equation}
    Take the time step $\tau = \tau(\chi, \sigma, m, \tilde{T}, 4r,\kappa)$ from Theorem~\ref{TheoremLocalExistenceNonlinear}. 
    Choose an index $N \in \N$ 
   with 
    \begin{align*}
      t_0 + (N-1) \tau < T' \leq t_0 + N \tau.
    \end{align*} 
    We set $t_k = t_0 + k \tau$ for $k \in \{1, \ldots, N-1\}$. If
    $t_0 + N \tau < T^*$, we put $t_N = t_0 + N \tau$; else we take any $t_N$ from $(T', T^*)$.

    Let $0 < \delta_0 < r_0$ be so small that $C_{\operatorname{Sob}} \delta_0 < \kappa$, where 
    $C_{\operatorname{Sob}}$ is the norm of the
    embedding from $\Hhdom{2} \hookrightarrow L^\infty(G)$. As in~\eqref{EquationDefinitionUkappa} 
    and~\eqref{EquationDefinitionTildeUkappa} we define the compact sets
	\begin{equation*}
		\mathcal{U}_\kappa = \{y \in \mathcal{U} \colon \dist(y, \partial \mathcal{U}) \geq \kappa \} \cap B_{2 C_{\operatorname{Sob}} 4r}(0) \quad \text{and} \quad \tilde{\mathcal{U}}_\kappa = \mathcal{U}_\kappa + \overline{B}(0,\kappa/2).
	\end{equation*}    
     Take $(\tilde{f}, \tilde{g}, \tilde{u}_0) \in B_{M_{\chi,\sigma,m}(t_0,T_+)}((f,g,u_0),\delta_0)$.
    Using the choice of $r$ and $r_0$ in~\eqref{EquationChoiceForrAndr0InContinuourDependance}, we deduce that 
    \begin{align}
     &\Hhndom{m}{\tilde{u}_0} \leq \Hhndom{m}{u_0} + \Hhndom{m}{\tilde{u}_0 - u_0} \leq r_0 + \delta_0 < 2 r_0 < 2r, \nonumber\\
     &\|\tilde{g}\|_{E_m((t_0, T') \times \partial G)} \leq r_0 + \delta_0 < 2r, \quad
     \|\tilde{f}\|_{H^m((t_0, T') \times G)} \leq r_0 + \delta_0  < 2r, \label{EquationBoundForTildefInHm}\\
     &\sum_{j = 0}^{m-1} \Hhndom{m-1-j}{\partial_t^j \tilde{f}(t_0)} \leq m \|\tilde{f}\|_{G_{m-1}((t_0, T')\times G)} \leq C_S m \|\tilde{f}\|_{H^m((t_0, T') \times G)} \nonumber \\
     &\qquad < 2 C_S m r_0 < 2r. \label{EquationBoundForTildefInGmminus1}
    \end{align}
	Moreover, $\|\tilde{u}_0 - u_0\|_{L^\infty(G)} \leq C_{\operatorname{Sob}} \delta_0 < \kappa$ so that $\image \tilde{u}_0$ is contained in $\mathcal{U}_\kappa$.    
    So Theorem~\ref{TheoremLocalExistenceNonlinear} shows that the solution $\tilde{u}$ of~\eqref{EquationNonlinearIBVP}
    with data $\tilde{f}$, $\tilde{g}$, and $\tilde{u}_0$ at $t_0$ exists on $[t_0, t_1]$ and 
    belongs to $G_m((t_0,t_1) \times G)$. Moreover, the proof of this theorem yields a radius 
    $R = R_{\ref{TheoremLocalExistenceNonlinear}}(\chi,\sigma,m,4r,\kappa) > 4r$, see~\eqref{EquationRadiusForFixedPointSpace},
    such that $\|\tilde{u}\|_{G_m((t_0,t_1)\times G)} \leq R$. This proof also shows that 
    $\image \tilde{u}(t) \subseteq \tilde{\mathcal{U}}_\kappa$ for all $t \in [t_0,t_1]$, cf.~\eqref{EquationDefinitionTildeUkappa}.
    We conclude that~$\Psi$ maps $B_{M_{\chi,\sigma,m}(t_0,T_+)}((f,g,u_0),\delta_0)$ into 
    $B_{G_m((t_0,t_1)\times G)}(0,R)$. We further deduce from~\eqref{EquationDifferenceOfNonlinearSolutionsInGmminus1}
    that there is a constant
    \begin{align*} 
     C_{\ref{EquationLipschitzEstimateForFlowOnFirstTimeInterval}} 
     = C_{\ref{EquationLipschitzEstimateForFlowOnFirstTimeInterval}}(\chi,\sigma,m,r,\tilde{T},\kappa) 
     =  C_{\ref{EquationDifferenceOfNonlinearSolutionsInGmminus1}}(\chi,\sigma,m,r, R(\chi,\sigma,m,r,\kappa),\tilde{U}_\kappa, \tilde{T})
    \end{align*}
    such that
    \begin{align}
    \label{EquationLipschitzEstimateForFlowOnFirstTimeInterval}
     &\|\Psi(\tilde{f}, \tilde{g}, \tilde{u}_0) - \Psi(f,g,u_0)\|_{G_{m-1}((t_0,t_1) \times G)}^2 \nonumber \\
     &\leq C_{\ref{EquationLipschitzEstimateForFlowOnFirstTimeInterval}}  \| \tilde{f} - f \|_{H^{m-1}((t_0,t_1) \times G)}^2 + C_{\ref{EquationLipschitzEstimateForFlowOnFirstTimeInterval}} \|\tilde{g} - g\|_{E_{m-1}((t_0,t') \times \partial G)}^2 \nonumber\\
     &\hspace{2em} + C_{\ref{EquationLipschitzEstimateForFlowOnFirstTimeInterval}}  \Big(\sum_{j = 0}^{m-1} \Hhn{m-j-1}{\partial_t^j \tilde{f}(t_0) - \partial_t^j f(t_0)}^2 + \Hhn{m}{\tilde{u}_0 - u_0}^2 \Big) 
    \end{align}
    for all $(\tilde{f}, \tilde{g}, \tilde{u}_0) \in B_{M_{\chi,\sigma,m}(t_0,T_+)}((f,g,u_0),\delta_0)$.
    
    Next take a sequence $(f_n, g_n, u_{0,n})_n$ in 
    $B_{M_{\chi,\sigma,m}(t_0,T_+)}((f,g,u_0),\delta_0)$ which converges to $(f,g, u_0)$ in this space.
    Since
    \begin{align}
    \label{EquationConvergenceToZeroDueToSobolev}
     \sum_{j = 0}^{m-1} \Hhndom{m-j-1}{\partial_t^j f_n(t_0) - \partial_t^j f(t_0)}^2 
     &\leq m C_S \|f_n - f\|_{H^m((t_0, T_+) \times G)} \longrightarrow 0
    \end{align}
    as $n \rightarrow \infty$, estimate~\eqref{EquationLipschitzEstimateForFlowOnFirstTimeInterval} 
    yields the limit
    \begin{align*}
     \|\Psi(f_n,g_n, u_{0,n}) - \Psi(f,g,u_0)\|_{G_{m-1}((t_0,t_1)\times G)} \longrightarrow 0
    \end{align*}
    as $n \rightarrow \infty$. 
    Lemma~\ref{LemmaConvergenceOfNonlinearSolutionInGmProvidedConvergenceInGmminus1}
    thus shows that $(\Psi(f_n, g_n, u_{0,n}))_n$ converges to $\Psi(f,g,u_0)$ in $G_{m}((t_0,t_1) \times G)$. 
    We conclude that the map 
    \begin{align*}
     \Psi \colon B_{M_{\chi,\sigma,m}(t_0,T_+)}((f,g,u_0),\delta_0) \rightarrow G_m((t_0,t_1)\times G)
    \end{align*}
    is continuous at $(f,g, u_0)$. Using also~\eqref{EquationChoiceForrAndr0InContinuourDependance} 
    and~\eqref{EquationFixingKappaDistanceInContinuousDependance}, we find 
    a number $\delta_1 \in (0, \delta_0]$ such that 
    for all data $(\tilde{f}, \tilde{g}, \tilde{u}_0) \in B_{M_{\chi,\sigma,m}(t_0,T_+)}((f,g,u_0),\delta_1)$
    the function
    $\Psi(\tilde{f}, \tilde{g}, \tilde{u}_0)$ exists on $[t_0,t_1]$ and 
    satisfies~\eqref{EquationLipschitzEstimateForFlowOnFirstTimeInterval} and
    \begin{align*}
     &\| \Psi(\tilde{f}, \tilde{g}, \tilde{u}_0)\|_{G_m((t_0,t_1) \times G)} \\
     &\leq \| \Psi(\tilde{f},\tilde{g}, \tilde{u}_0) - \Psi(f,g,u_0)\|_{G_m((t_0,t_1) \times G)} + \|\Psi(f,g,u_0)\|_{G_m((t_0,t_1) \times G)} < 2r, \\
     &\dist(\image \Psi(\tilde{f}, \tilde{g}, \tilde{u}_0)(t),\partial \mathcal{U}) >  \kappa,
    \end{align*}
    for all $t \in [t_0,t_1]$.
    
    Now assume that there is an index $j \in \{1, \ldots, N-1\}$ and a number $\delta_j > 0$ 
    such that $\Psi(\tilde{f}, \tilde{g}, \tilde{u}_0)$ 
    exists on $[t_0,t_j]$ and satisfies
    \begin{align*}
     &\|\Psi(\tilde{f}, \tilde{g}, \tilde{u}_0)\|_{G_{m}((t_0,t_j) \times G)} < 2r \quad \text{and} \quad \dist(\image \Psi(\tilde{f}, \tilde{g}, \tilde{u}_0)(t), \partial \mathcal{U}) >  \kappa
    \end{align*}
    for all $t \in [t_0, t_j]$ and $(\tilde{f},\tilde{g}, \tilde{u}_0) \in B_{M_{\chi,\sigma,m}(t_0,T_+)}((f,g,u_0),\delta_j)$.
    
    Fix such a tuple $(\tilde{f}, \tilde{g}, \tilde{u}_0)$. Then the tuple $(\chi,\sigma,t_j, B,\tilde{f}, \tilde{g}, \Psi(\tilde{f},\tilde{g}, \tilde{u}_0)(t_j))$
    fulfills the nonlinear compatibility conditions~\eqref{EquationNonlinearCompatibilityConditions} 
    of order $m$ by~\eqref{EquationTimeDerivativesOfSolutionEqualSChiSigmaG} and 
    \begin{align*}
     &\Hhndom{m}{\Psi(\tilde{f}, \tilde{g}, \tilde{u}_0)(t_j)} \leq \|\Psi(\tilde{f}, \tilde{g}, \tilde{u}_0)\|_{G_m((t_0,t_j) \times G)} < 2r, \\
     &\dist(\image \Psi(\tilde{f}, \tilde{g}, \tilde{u}_0)(t_j), \partial \mathcal{U}) > \kappa.
    \end{align*}
    In view of~\eqref{EquationBoundForTildefInHm} and~\eqref{EquationBoundForTildefInGmminus1}, 
    Theorem~\ref{TheoremLocalExistenceNonlinear} shows that the problem~\eqref{EquationNonlinearIBVP} 
    with inhomogeneity $\tilde{f}$, boundary value $\tilde{g}$, and initial value $\Psi(\tilde{f}, \tilde{g}, \tilde{u}_0)(t_j)$ at initial time $t_j$ 
    has a unique solution $\tilde{u}^j$ on $[t_j, t_{j+1}]$, which is bounded by $R$ in 
    $G_m((t_j,t_{j+1}) \times G)$ and whose image is contained in $\tilde{U}_\kappa$. Concatenating $\Psi(\tilde{f}, \tilde{g},\tilde{u}_0)$ and $\tilde{u}^j$, 
    we obtain a solution of~\eqref{EquationNonlinearIBVP} with data $\tilde{f}$, $\tilde{g}$,
    and
     $\tilde{u}_0$ at initial time $t_0$, cf. Remark~\ref{RemarkNegativeTimes}. 
    This means that $\Psi(\tilde{f}, \tilde{g}, \tilde{u}_0)$ exists on $[t_0, t_{j+1}]$. 
    Uniqueness of solutions of~\eqref{EquationNonlinearIBVP}, i.e. Lemma~\ref{LemmaUniquenessOfNonlinearSolution}, further yields $\Psi(\tilde{f}, \tilde{g}, \tilde{u}_0)_{|[t_j, t_{j+1}]} = \tilde{u}^j$ 
    so that
    \begin{align*}
     \| \Psi(\tilde{f}, \tilde{g}, \tilde{u}_0) \|_{G_{m}((t_0, t_{j+1}) \times G)} 
     &\leq \max\{\| \Psi(\tilde{f}, \tilde{g}, \tilde{u}_0) \|_{G_{m}((t_0, t_{j})\times G)}, \|\tilde{u}^j\|_{G_m((t_j,t_{j+1}) \times G)}\} \\
     &\leq \max\{2r, R\} \leq R.
    \end{align*}
    As for the interval $[t_0, t_1]$, we obtain a number
     $\delta_{j+1} \in (0, \delta_j]$ such that
    \begin{align*}
     &\|\Psi(\tilde{f}, \tilde{g}, \tilde{u}_{0}) \|_{G_{m}((t_0, t_{j+1}) \times G)}  < 2r \quad \text{and} \quad 
     \dist(\image \Psi(\tilde{f}, \tilde{g}, \tilde{u}_0)(t), \partial \mathcal{U}) > \kappa
    \end{align*}
    for all $t \in [t_0, t_{j+1}]$ and $(\tilde{f}, \tilde{g}, \tilde{u}_0) \in B_{M_{\chi,\sigma,m}(t_0,T_+)}((f,g,u_0),\delta_{j+1})$.
    
    By induction, the above property holds for $j + 1 = N$, so that
    \begin{align*}
     T_+(m, t_0, \tilde{f}, \tilde{g}, \tilde{u}_0) > t_N \geq T'
    \end{align*}
    for all $(\tilde{f}, \tilde{g}, \tilde{u}_0) \in B_{M_{\chi,\sigma,m}(t_0,T_+)}((f,g,u_0),\delta_{N})$.
    
    Next fix 
    two tuples $(\tilde{f}_1, \tilde{g}_1, \tilde{u}_{0,1})$ and $(\tilde{f}_2, \tilde{g}_2, \tilde{u}_{0,2})$ in $B_{M_{\chi,\sigma,m}(t_0,T_+)}((f,g,u_0),\delta_{N})$.
    Replacing $u$ by $\Psi(\tilde{f}_2, \tilde{g}_2, \tilde{u}_{0,2})$ in step I), we deduce from~\eqref{EquationDifferenceOfNonlinearSolutionsInGmminus1} 
    that
    \begin{align}
    \label{EquationEstimateForDifferenceContinuousDependanceAllDataFromBall}
     &\|\Psi(\tilde{f}_1, \tilde{g}_1, \tilde{u}_{0,1}) - \Psi(\tilde{f}_2, \tilde{g}_2, \tilde{u}_{0,2})\|_{G_{m-1}((t_0,T') \times G)}^2 \nonumber \\
     &\leq C \| \tilde{f}_1 - \tilde{f}_2 \|_{H^{m-1}((t_0,T') \times G)}^2 + C\|\tilde{g}_1 - \tilde{g}_2 \|_{E_{m-1}((t_0, T') \times G)}^2 \nonumber\\
     &\quad + C \Big( \sum_{j = 0}^{m-1} \Hhndom{m-j-1}{\partial_t^j \tilde{f}_1(t_0) - \partial_t^j \tilde{f}_2(t_0)}^2 + \Hhndom{m}{\tilde{u}_{0,1} - \tilde{u}_{0,2}}^2 \Big),
    \end{align}
    where $C = C(\chi,\sigma,m,r, \tilde{T},\kappa) = C_{\ref{EquationDifferenceOfNonlinearSolutionsInGmminus1}}(\chi,\sigma,m,2r,2r,\mathcal{U}_\kappa,\tilde{T})$
    and the constant $C_{\ref{EquationDifferenceOfNonlinearSolutionsInGmminus1}}$ from~\eqref{EquationDifferenceOfNonlinearSolutionsInGmminus1}.
    This estimate implies~\eqref{EquationLipschitzEstimateInContinuousDependenceAssertion}. Finally, we 
    take a sequence $(\tilde{f}_n, \tilde{g}_n, \tilde{u}_{0,n})_n$ in $B_{M_{\chi,\sigma,m}(t_0,T_+)}((f,g,u_0),\delta_{N})$
    which converges to $(\tilde{f}_1, \tilde{g}_1, \tilde{u}_{0,1})$ in $M_{\chi,\sigma,m}(t_0,T_+)$. Employing~\eqref{EquationSobolevInequalityOnMaximalInterval}, 
    we deduce from~\eqref{EquationEstimateForDifferenceContinuousDependanceAllDataFromBall}
    that $\Psi(\tilde{f}_n, \tilde{g}_n, \tilde{u}_{0,n})$ tends to $\Psi(\tilde{f}_1, \tilde{g}_1, \tilde{u}_{0,1})$ 
    in the space $G_{m-1}((t_0,T') \times G)$ as $n \rightarrow \infty$. Lemma~\ref{LemmaConvergenceOfNonlinearSolutionInGmProvidedConvergenceInGmminus1}
    therefore implies that 
    \begin{align*}
     \| \Psi(\tilde{f}_n, \tilde{g}_n, \tilde{u}_{0,n}) - \Psi(\tilde{f}_1, \tilde{g}_1, \tilde{u}_{0,1})\|_{G_{m}((t_0,T') \times G)} \longrightarrow 0
    \end{align*}
    as $n \rightarrow \infty$. Consequently, the flow map 
    \begin{align*}
     \Psi \colon B_{M_{\chi,\sigma,m}(t_0,T_+)}((f,g,u_0),\delta_{N}) \rightarrow G_{m}((t_0,T') \times G)
    \end{align*}
    is continuous at $(\tilde{f}_1, \tilde{g}_1, \tilde{u}_{0,1})$ and thus on $B_{M_{\chi,\sigma,m}(t_0,T_+)}((f,g,u_0),\delta_{N})$.
  \end{proof}
 
 \section{Finite propagation speed}
 \label{SectionFinitePropagationSpeed}
 We finally prove that solutions of~\eqref{EquationNonlinearIBVP} have finite propagation speed, 
 i.e., that initial disturbances travel with finite speed. Several techniques to establish this property 
 have been developed in the literature, see e.g.~\cite{BahouriCheminDanchin, BenzoniGavage, Evans}. 
 While these references work on the full space, finite speed of propagation is proven for an 
 initial boundary value problem in~\cite{ChazarainPiriou}, making however several structural
 assumptions on 
 the problem which are not fulfilled by Maxwell's equations. We thus follow a different approach 
 and show that the technique of weighted energy estimates from~\cite{BahouriCheminDanchin} is 
 well adaptable to our setting.
 
 We first prove the finite propagation speed property for the 
 corresponding linear problem~\eqref{IBVP}. This result can be transferred to domains via 
 localization 
 as in the previous sections, see~\cite[Chapter~6]{SpitzDissertation} for details. We concentrate 
 on the half-space case here since in this case one sees much better how the maximal
 propagation speed depends on the coefficients. We however note that the 
 coefficients on the half-space arising after localization depend on the charts of $\partial G$ so 
 that the maximal propagation speed of the solution on domains depends on the shape of $\partial G$.
 We refer to Theorem~6.1 of~\cite{SpitzDissertation} for a result on $G$ itself.
 
 In this section we set $J = (0,T)$ for a time $T > 0$.
 \begin{theorem}
 \label{TheoremFinitePropagationSpeed}
 Let $\eta > 0$, $A_0 \in \Fupdwl{3}{c}{\eta}$, $A_1, A_2 \in \Fcoeff{3}{cp}$, $A_3 = A_3^{\operatorname{co}}$, 
 $D \in \Fuwl{3}{c}$, and $B = B^{\operatorname{co}}$. We set 
 \begin{align*}
  C_0 = \frac{1}{\eta} \, \sum_{j = 1}^3 \|A_j\|_{L^\infty(\Omega)}.
 \end{align*}
 Let $R > 0$ and $x_0 \in \overline{\R^3_+}$. We define the backward cone $\mathcal{C}$ by
 \begin{equation*}
  \mathcal{C} = \{(t,x) \in \R \times \R^3 \colon |x - x_0| < R - C_0 \,t \}.
 \end{equation*}
 Let $f \in \Ltwoa$, $g \in L^2(J, H^{1/2}(\partial \R^3_+))$, and $u_0 \in \Ltwoh$ satisfy
 \begin{equation*}
  \begin{aligned}
    f&= 0 \quad &&\text{on } \mathcal{C} \cap \Omega, \\
    g&= 0 	&&\text{on } \mathcal{C} \cap (J \times \partial \R^3_+), \\
    u_0&= 0 	&&\text{on } \mathcal{C}_{t=0} \cap \R^3_+,
  \end{aligned}
 \end{equation*}
 where 
 $\mathcal{C}_{t=0} = \{x \in \R^3 \colon (0,x) \in \mathcal{C}\}$. Then the unique solution 
 $u \in C(\clJ, \Ltwoh)$ of the linear initial boundary value problem~\eqref{IBVP} 
 with inhomogeneity $f$, boundary value $g$, and initial value $u_0$ vanishes on the cone $\mathcal{C}$, 
 i.e., 
 \begin{equation*}
  u(t,x) = 0 \quad \text{for almost all } (t,x) \in \mathcal{C} \cap \Omega.
 \end{equation*}
\end{theorem}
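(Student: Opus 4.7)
The plan is to prove the theorem by a weighted energy estimate on a truncated backward cone. For $\tau \in [0, \min\{T, R/C_0\})$ set
\[
K_\tau := \{(t,x) \in [0,\tau] \times \overline{\R^3_+} \colon |x - x_0| \leq R - C_0 t\},
\]
and write $\Sigma_\tau$ for its top face at $t = \tau$, $\Gamma_\tau$ for its lateral (cone) boundary, and $\Pi_\tau = K_\tau \cap (J \times \partial \R^3_+)$ for the part of its boundary lying in the conducting wall. The strategy is to integrate a divergence identity for $u \cdot A_0 u$ and $u \cdot A_j u$ over $K_\tau$, check that every boundary contribution other than the one on $\Sigma_\tau$ is either zero or nonnegative, and then close with Gronwall. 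Since the a priori regularity of $u$ is only $C(\clJ, \Ltwoh)$, I would first approximate the data $(f,g,u_0)$ by smoother, compatible data whose associated solutions lie in $\G{1}$ (using the construction from step~II of the proof of Lemma~\ref{LemmaApporixmationInLinearSituationInL2}), run the argument at that level of regularity, and then pass to the $L^2$-limit using the continuous dependence in Theorem~\ref{TheoremExistenceAndUniquenessOnDomain}; the approximating solutions will inherit vanishing data on (slightly shrunk) cones and the desired property transfers to $u$ by strong convergence.

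At the smooth level, using symmetry of $A_0$ and $A_j$, the equation yields pointwise
\[
\partial_t(u \cdot A_0 u) + \sum_{j=1}^3 \partial_j(u \cdot A_j u) = 2 f \cdot u + u \cdot \bigl(\partial_t A_0 + \sum_{j=1}^3 \partial_j A_j - D - D^T\bigr) u.
\]
Integrating over $K_\tau$ and applying the divergence theorem in $\R^{1+3}$, the contribution from the bottom face $\{t=0\}$ vanishes because $u_0 \equiv 0$ on $\mathcal{C}_{t=0} \cap \R^3_+$, and the contribution from the top face is $\int_{\Sigma_\tau} u \cdot A_0 u \, dx$. On the lateral cone $\Gamma_\tau$ the outward space-time normal is proportional to $(C_0, (x-x_0)/|x-x_0|)$, so the integrand there is (up to the factor $(1+C_0^2)^{-1/2}$)
\[
C_0 \, u \cdot A_0 u + \sum_{j=1}^3 \frac{(x-x_0)_j}{|x-x_0|} u \cdot A_j u \geq \bigl(C_0 \eta - \sum_{j=1}^3 \|A_j\|_{L^\infty(\Omega)}\bigr)|u|^2 = 0
\]
by the very definition of $C_0$; this is the essential geometric input. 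The contribution from $\Pi_\tau$ comes with outward normal $-e_3$, i.e., it equals $-\int_{\Pi_\tau} u \cdot A_3^{\operatorname{co}} u$. Using~\eqref{EquationDecompositionOfA3co} together with $B^{\operatorname{co}} u = g = 0$ on $\Pi_\tau$, we get $u \cdot A_3^{\operatorname{co}} u = B^{\operatorname{co}} u \cdot C^{\operatorname{co}} u = 0$ there, so this term vanishes as well.

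Combining the pieces and using $f \equiv 0$ on $\mathcal{C}$, there exists a constant $\kappa$ depending only on the $W^{1,\infty}(\Omega)$-norms of $A_0, A_1, A_2, A_3, D$ such that
\[
\eta \int_{\Sigma_\tau} |u(\tau)|^2 \, dx \leq \int_{\Sigma_\tau} u \cdot A_0 u \, dx \leq \kappa \int_0^\tau \int_{\Sigma_s} |u(s)|^2 \, dx \, ds.
\]
Grönwall's inequality then forces $\int_{\Sigma_\tau}|u(\tau)|^2 \, dx = 0$ for all admissible $\tau$, which gives $u \equiv 0$ on $\mathcal{C} \cap \Omega$. The step I expect to require the most care is the density/regularization: one has to approximate $(f,g,u_0)$ by compatible smoother data whose supports are still disjoint from (slightly enlarged) truncations of $\mathcal{C}$, so that both the resulting $G_1$-solution satisfies the hypotheses of the smooth argument \emph{and} the approximation converges in $\G{0}$ on a neighborhood of $\mathcal{C}$; this is handled by cutting off the boundary data construction in step~II of the proof of Lemma~\ref{LemmaApporixmationInLinearSituationInL2} against a smooth spatial cutoff vanishing on the cone base. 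Everything else is a routine application of the divergence theorem on the Lipschitz domain $K_\tau$.
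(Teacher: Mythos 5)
Your proposal is correct, and it takes a genuinely different route from the paper. You integrate the divergence identity for $(u\cdot A_0 u, u\cdot A_1 u, u\cdot A_2 u, u\cdot A_3 u)$ directly over the truncated backward cone $K_\tau$ and observe that the lateral cone contribution is nonnegative (exactly because $C_0 \eta = \sum_j \|A_j\|_{L^\infty(\Omega)}$), the bottom face vanishes by the hypothesis on $u_0$, and the wall face vanishes via the algebraic decomposition~\eqref{EquationDecompositionOfA3co} and $B^{\operatorname{co}} u = 0$; Gronwall then kills the top-face term. The paper instead conjugates the equation by an exponential weight $e^{\tau\Phi}$ with $\Phi(t,x) = -t + \psi(x)$, where $\psi$ is a smooth approximation of $K(R - |x-x_0|)$, and works on the whole half-space $\R^3_+$: the key inequality $A_0 - \sum_j \partial_j\psi\,A_j \geq 0$ plays the role of your lateral-cone sign condition, the boundary term is controlled by $\Han{1}{u}\,\|g_{2\tau}\|_{L^2(\Gamma)}$, and one lets $\tau\to\infty$ and then shrinks the cone by $\epsilon$. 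What the paper's weighted method buys is that all integrations are over the flat domains $\R^3_+$ and $J\times\partial\R^3_+$; no divergence theorem on the Lipschitz truncated cone $K_\tau$ (whose corners where $\Gamma_\tau$ meets $\Pi_\tau$ need a transversality check) is required. What your direct method buys is that it is more geometric and immediate: Gronwall on $\int_{\Sigma_s}|u|^2\,dx$ yields the conclusion without introducing the auxiliary weight $\psi$, without passing $\tau\to\infty$, and without the exhaustion $\mathcal{C} = \bigcup_n \mathcal{C}_{3\epsilon_n}$. Both approaches require the identical $\G{1}$-regularization step for the data (which the paper carries out in step~IV of its proof, and which you correctly identify as the delicate part, pointing to the construction in Lemma~\ref{LemmaApporixmationInLinearSituationInL2}), and both shrink the cone by $O(\epsilon)$ to absorb the mollification support enlargement. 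Your argument is sound.
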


\begin{proof}
 I) Let $\epsilon > 0$ and set $K = C_0^{-1}$. We fix a function 
    $\psi \in C^{\infty}(\R^3)$ with
    \begin{align}
     &-2 \epsilon + K(R - |x - x_0|) \leq \psi(x) \leq - \epsilon + K (R - |x - x_0|) \quad \text{for all } x \in \R^3, \label{EquationEstimatesForWeightFunctionPsi} \\
     &\| \nabla \psi \|_{L^\infty(\R^3)} \leq K,
    \end{align}
    see step~I) of the proof of Theorem~6.1 in~\cite{SpitzDissertation}.
    We first assume that $f$ belongs to $\Ha{1}$, $g$ to 
    $\E{1}$, 
    and $u_0$ to $\Hh{1}$ and that $(t_0,A_0, \ldots, A_3, D, B, f, g, u_0)$ fulfills 
    the linear compatibility conditions of first order.
    Fix $\epsilon > 0$. We introduce the cone
    \begin{equation*}
     \mathcal{C}_\epsilon = \{(t,x) \in \R \times \R^3 \colon |x - x_0| < R - C_0 t - C_0 \epsilon \}.
    \end{equation*}
    The cones $\mathcal{C}_{t=0, \epsilon}$ are defined analogously.
    We set
    \begin{equation*}
     \Phi(t,x) = - t + \psi(x)
    \end{equation*}
    for all $(t,x) \in \R \times \R^3$. Note that $\Phi$ belongs to $C^\infty(\overline{J \times \R^3_+})$. We next 
    want to derive a weighted energy inequality for $u$. To that purpose, we define
    \begin{align*}
     u_\tau = e^{\tau \Phi} u, \quad f_\tau = e^{\tau \Phi} f, \quad g_\tau = e^{\tau \Phi(\cdot, 0)} g, \quad u_{0,\tau} = e^{\tau \Phi(0, \cdot)} u_0
    \end{align*}
    for all $\tau > 0$. Observe that there is a constant $C = C(\tau, \epsilon)$ such that
    \begin{equation*}
     e^{\tau \Phi(t,x)} + \sum_{j = 0}^3 |\partial_j e^{\tau \Phi(t,x)}| \leq C e^{-\tau K |x - x_0|}
    \end{equation*}
    for all $(t,x) \in [0,\infty) \times \R^3$ and $\tau > 0$. We further observe that 
    $u$ belongs to $\G{1}$ by Theorem~\ref{TheoremExistenceAndUniquenessOnDomain}. 
    Consequently, we infer that $u_\tau$ is contained in $\G{1}$, 
    $f_\tau$ in $\Ha{1}$, $g_\tau$ in $\E{1}$, and $u_{0,\tau}$ in $\Hh{1}$ 
    for all $\tau > 0$.
    With this amount of regularity we can compute
    \begin{align}
     A_0 \partial_t u_\tau &+ \sum_{j = 1}^3 A_j \partial_j u_\tau + D u_\tau = f_\tau - \tau \Big(A_0 - \sum_{j=1}^3 \partial_j \psi A_j \Big) u_\tau
     \label{EquationEquationForUTau}
    \end{align}
    for all $\tau > 0$. These functions also satisfy $u_\tau(0) = u_{0,\tau}$ on $\R^3_+$ and $B u_\tau = g_\tau$
    on $J \times \partial \R^3_+$ for all $\tau > 0$.
    
    Moreover, $A_0 - \sum_{j = 1}^3 \partial_j \psi A_j$ is uniformly positive semidefinite 
    since
    \begin{align*}
     &\Big(\big(A_0 - \sum_{j=1}^3 \partial_j \psi A_j \big) \xi, \xi \Big)_{\R^6 \times \R^6} 
     \geq \eta |\xi|^2 - \sum_{j = 1}^3 \|\partial_j \psi\|_{L^\infty(\R^3)} \|A_j\|_{L^\infty(\Omega)} |\xi|^2  \\
     &\geq \eta |\xi|^2 - \eta K C_0 |\xi|^2 = 0
    \end{align*}
    on $\Omega$ for $\xi \in \R^6$, where we used the definition of $C_0 = K^{-1}$. Identity~\eqref{EquationEquationForUTau} 
    in combination with this estimate then yields
    \begin{align*}
     &\partial_t \langle A_0 u_\tau, u_\tau \rangle_{\Ltwoh \times \Ltwoh} \\
      &= \langle \partial_t A_0 u_\tau, u_\tau \rangle_{\Ltwoh \times \Ltwoh} + 2 \Big\langle f_\tau - \sum_{j=1}^3 A_j \partial_j u_\tau - D u_\tau, u_\tau \Big\rangle_{\Ltwoh \times \Ltwoh} \\
      &\quad - 2 \tau \Big\langle \Big(A_0 - \sum_{j = 1}^3 \partial_j \psi A_j \Big) u_\tau, u_\tau \Big \rangle_{\Ltwoh \times \Ltwoh} \\
     &\leq \langle \partial_t A_0 u_\tau, u_\tau \rangle_{\Ltwoh \times \Ltwoh} - 2 \sum_{j = 1}^3 \langle A_j \partial_j u_\tau, u_\tau \rangle_{\Ltwoh \times \Ltwoh} \\
     &\quad - 2 \langle D u_\tau, u_\tau \rangle_{\Ltwoh \times \Ltwoh} + 2 \langle f_\tau, u_\tau \rangle_{\Ltwoh \times \Ltwoh}
    \end{align*}
    for almost all $t \in J$ and for all $\tau > 0$. Hence,
    \begin{align}
     &\eta \Ltwohn{u_\tau(t)}^2 \leq  \langle  A_0 u_\tau, u_\tau \rangle_{\Ltwoh \times \Ltwoh} \nonumber\\
     &= \langle  A_0(0) u_\tau(0), u_\tau(0) \rangle_{\Ltwoh \times \Ltwoh} + \int_0^t \partial_t  \langle A_0 u_\tau, u_\tau \rangle_{\Ltwoh \times \Ltwoh}(s) ds \nonumber\\
     &\leq \|A_0\|_{L^\infty(\Omega)} \Ltwohn{u_{0,\tau}}^2 + (\|\partial_t A_0\|_{L^\infty(\Omega)} + 2 \|D\|_{L^\infty(\Omega)}) \int_{0}^t  \Ltwohn{u_\tau(s)}^2 ds \nonumber\\
     &\quad - 2 \sum_{j = 1}^3 \int_0^t \langle A_j(s) \partial_j u_\tau(s), u_\tau(s) \rangle_{\Ltwoh \times \Ltwoh} ds \nonumber\\
     &\quad + 2 \int_0^t \Ltwohn{f_\tau(s)} \Ltwohn{u_\tau(s)} ds \label{EquationEstimateOfUTauForGronwall1}
    \end{align}
    for all $t \in J$ and $\tau > 0$. Since $u_\tau \in \G{1}$, the symmetry of the matrices 
    $A_j$ further implies
    \begin{align}
     &\langle A_j \partial_j u_\tau, u_\tau \rangle_{\Ltwoh \times \Ltwoh} = -\frac{1}{2} \langle \partial_j A_j  u_\tau, u_\tau \rangle_{\Ltwoh \times \Ltwoh}  
     \label{EquationSymmetryOfAj}, \\
      &\langle A_3 \partial_3 u_\tau, u_\tau \rangle_{\Ltwoh \times \Ltwoh} = -\frac{1}{2} \langle \partial_3 A_3  u_\tau, u_\tau \rangle_{\Ltwoh \times \Ltwoh} \nonumber\\
     &\hspace{14em} - \frac{1}{2} \int_{\partial \R^3_+} \tr(A_3 u_\tau)(\sigma) \tr (u_\tau)(\sigma) d\sigma
     \label{EquationSymmetryOfA3}
    \end{align}
    on $J$ for $j \in \{1,2\}$ by integration by parts. We set 
    \begin{equation*}
     C_1 = \frac{1}{\eta} \Big(\sum_{j=0}^3 \|A_j\|_{W^{1,\infty}(\Omega)} + 2 \|D\|_{L^\infty(\Omega)} + \frac{1}{\eta}\Big).
    \end{equation*}
    Inserting~\eqref{EquationSymmetryOfAj} and~\eqref{EquationSymmetryOfA3} into~\eqref{EquationEstimateOfUTauForGronwall1}, 
    we derive
    \begin{align}
     &\eta \Ltwohn{u_\tau(t)}^2 \nonumber \\
     &\leq \eta \, C_1 \Ltwohn{u_{0,\tau}}^2 + \eta \Ltwoanwgamma{f_\tau}^2 + \eta \, C_1 \int_0^t \Ltwohn{u_\tau(s)}^2 ds \nonumber \\
     &\quad +  \langle \tr(A_3 u_\tau), \tr u_\tau  \rangle_{L^2(\Gamma_t) \times L^2(\Gamma_t)} \label{EquationEstimateOfUTauForGronwall2}
    \end{align}
    for all $t \in J$ and $\tau > 0$, where we denote $(0,t) \times \partial \R^3_+$ by $\Gamma_t$. In order to estimate the last 
    term in~\eqref{EquationEstimateOfUTauForGronwall2}, we recall that the boundary matrix 
    $A_3 = A_3^{\operatorname{co}}$ decomposes as
    \begin{equation*}
     A_3^{\operatorname{co}} =  \frac{1}{2} C^{\operatorname{co} T} B^{\operatorname{co}} + \frac{1}{2}  B^{\operatorname{co} T} C^{\operatorname{co}},
    \end{equation*}
    see~\eqref{EquationDecompositionOfA3co}.
    Employing $B u_\tau = g_\tau$, $B = B^{\operatorname{co}}$, and $u \in \G{1}$, we thus infer
    \begin{align}
    \label{EquationEstimatingTheBoundaryTerm}
     &\langle \tr(A_3 u_\tau), \tr u_\tau  \rangle_{L^2(\Gamma_t) \times L^2(\Gamma_t)} = \langle C^{\operatorname{co}} \tr u_\tau, B^{\operatorname{co}} \tr u_\tau  \rangle_{L^2(\Gamma_t) \times L^2(\Gamma_t)} \\
     &= \langle C^{\operatorname{co}} \tr u_\tau, g_\tau  \rangle_{L^2(\Gamma_t) \times L^2(\Gamma_t)}  = \langle C^{\operatorname{co}} \tr u, g_{2\tau}  \rangle_{L^2(\Gamma_t) \times L^2(\Gamma_t)} \nonumber\\
     &\leq \|C^{\operatorname{co}} \tr u\|_{L^2(\Gamma_t)} \|g_{2\tau}\|_{L^2(\Gamma_t)} \leq \| \tr u \|_{L^2(\Gamma_t)} \|g_{2 \tau}\|_{L^2(\Gamma_t)} \leq \Han{1}{u} \|g_{2 \tau}\|_{L^2(\Gamma)} \nonumber 
    \end{align}
    for all $t \in J$ and $\tau > 0$, where $\Gamma$ denotes $J \times \partial \R^3_+$ as usual. We point out that
    $\Han{1}{u}$ is finite as $u \in \G{1}$. Estimate~\eqref{EquationEstimateOfUTauForGronwall2} and~\eqref{EquationEstimatingTheBoundaryTerm} 
    finally lead to
    \begin{align*}
     \Ltwohn{u_\tau(t)}^2 &\leq C_1 \Ltwohn{u_{0,\tau}}^2 + \Ltwoanwgamma{f_\tau}^2 + \frac{1}{\eta} \Han{1}{u} \|g_{2\tau}\|_{L^2(\Gamma)} \\
			  &\quad + C_1 \int_0^t \Ltwohn{u_\tau(s)}^2 ds
    \end{align*}
    for all $t \in J$ and $\tau > 0$ so that Gronwall's lemma implies
    \begin{align}
    \label{EquationWeightedEnergyEstimate}
     \sup_{t \in J} \Ltwohn{u_\tau(t)}^2 \leq \Big(C_1 \Ltwohn{u_{0,\tau}}^2 + \Ltwoanwgamma{f_\tau}^2 + \frac{1}{\eta} \Han{1}{u} \|g_{2\tau}\|_{L^2(\Gamma)} \Big) e^{C_1 T}
    \end{align}
    for all $\tau > 0$.
    
    III) To exploit the weighted energy estimate~\eqref{EquationWeightedEnergyEstimate}, we 
    take $(s,x) \in (J \times \R^3_+) \setminus \mathcal{C}$, i.e.,  
    $|x - x_0| \geq R - C_0 s$ 
    and hence
    \begin{equation*}
     -s + K(R - |x - x_0|) = -s + \frac{1}{C_0}(R - |x - x_0|) \leq 0.
    \end{equation*}
    It follows
    \begin{equation*}
     e^{\tau \Phi(s,x)} = e^{\tau\,(-s + \psi(x))} \leq e^{-\tau \epsilon}
    \end{equation*}
    for all $\tau > 0$.
    On the other hand, $f(s,x) = 0$ for almost all $(s,x) \in \mathcal{C}$, $g(s,x) = 0$ for almost all 
    $(s,x) \in \mathcal{C} \cap (J \times \partial \R^3_+)$, and $u_0(x) = 0$ for almost all $x \in \mathcal{C}_{t=0} \cap \R^3_+$. 
    We conclude that
    \begin{align*}
     &|f_\tau(s,x)| \leq |f(s,x)| \,\,\text{for all } \tau > 0 \quad \text{and} \quad |f_\tau(s,x)| \longrightarrow 0 \,\, \text{as } \tau \rightarrow \infty 
    \end{align*}
    for almost all $(s,x) \in J \times \R^3_+$ so that
    \begin{equation*}
     \Ltwoanwgamma{f_\tau} \longrightarrow 0
    \end{equation*}
    as $\tau \rightarrow \infty$. Analogously, we deduce
    \begin{equation*}
     \|g_{2 \tau}\|_{L^2(\Gamma)} \longrightarrow 0 \quad \text{and} \quad \Ltwohn{u_{0,\tau}} \longrightarrow 0
    \end{equation*}
    as $\tau \rightarrow \infty$. By~\eqref{EquationWeightedEnergyEstimate} the functions $u_\tau$ thus tend 
    to $0$ in $\G{0}$ as $\tau \rightarrow \infty$,
    so that
    \begin{equation}
    \label{EquationEstimateForUTauInSupNormInT}
     C_2 := \sup_{t \in J, \tau > 0} \Ltwohn{u_\tau(t)}^2 < \infty.
    \end{equation}
    Now take a point $(t,x)$ from $\mathcal{C}_{3 \epsilon}$. We then calculate
    \begin{align*}
     3 \epsilon &<  K (R - |x - x_0|) - t \leq -t + \psi(x) + 2 \epsilon = \Phi(t,x) + 2 \epsilon, \\
     \epsilon &< \Phi(t,x).
    \end{align*}
    Estimate~\eqref{EquationEstimateForUTauInSupNormInT} now implies
    \begin{align*}
     \int_{\mathcal{C}_{3 \epsilon} \cap \Omega} |u(t,x)|^2 dx dt &\leq e^{-2 \epsilon \tau} \int_{\mathcal{C}_{3 \epsilon} \cap \Omega} e^{2\tau \Phi(t,x)} |u(t,x)|^2 dx dt \\ 
     &\leq e^{-2 \epsilon \tau} T \sup_{t \in J} \Ltwohn{u_\tau(t)}^2 
     \leq C_2 T e^{-2 \epsilon \tau}
    \end{align*}
    for all $\tau > 0$. Letting $\tau \rightarrow \infty$, 
    we obtain
    $|u(t,x)| = 0$ for almost all $(t,x) \in \mathcal{C}_{3 \epsilon}$.
    
    Finally, we take a sequence $(\epsilon_n)_n$ in $(0,1)$ with $\epsilon_n \rightarrow 0$ as $n \rightarrow \infty$. 
    Since $u(t,x) = 0$ for almost all $(t,x) \in \mathcal{C}_{3 \epsilon_n}$ for all $n \in \N$, we conclude that
    \begin{equation*}
     u(t,x) = 0 \quad \text{for almost all } (t,x) \in \bigcup_{n \in \N} \mathcal{C}_{3 \epsilon_n} = \mathcal{C}.
    \end{equation*}
    
    IV) Now let $f$, $g$, and $u_0$ be as in the assertion. We take a family of functions $(f_\epsilon, g_\epsilon, u_{0,\epsilon})$ 
    in $\Ha{1} \times \E{1} \times \Hh{1}$ for $0 < \epsilon < 1$ such that $f_\epsilon$ converges to $f$ 
    in $\Ltwoa$, $g_\epsilon$ to $g$ in $\E{0}$, and $u_{0,\epsilon}$ to $u_0$ in $\Ltwoh$ as $\epsilon \rightarrow 0$ 
    and the tuples $(0, A_0, \ldots, A_3, D, B, f_\epsilon, g_\epsilon, u_{0,\epsilon})$ are 
    compatible of order $1$ for all $\epsilon \in (0,1)$. Such a family can be constructed as 
    explained at the beginning of step~II) of Lemma~\ref{LemmaApporixmationInLinearSituationInL2}. Using 
    a standard mollifier for the regularization as in step~I) of the proof of Theorem~4.13 in~\cite{SpitzDissertation}, 
    we can construct this family in such a way that    
    \begin{align*}
     &\supp f_\epsilon \subseteq \supp f + B(0,\epsilon), \quad \supp g_\epsilon \subseteq \supp g + B(0,\epsilon) \\
     &\supp u_{0,\epsilon} \subseteq (\supp u_0 \cup (\supp g(0) \times\{0\}) )+ B(0, \epsilon)
    \end{align*}
    for all $\epsilon \in (0,1)$.
    
    Let $\epsilon \in (0,1)$ and $(s,y) \in \R \times \R^3$ with $(s,y) \in \mathcal{C}^C + B(0,\epsilon)$, where $\mathcal{C}^C = \R^4 \setminus \mathcal{C}$.
    We then find $(t,x) \in \mathcal{C}^C$ such that
    $|(t,x) - (s,y)| < \epsilon$. Assume that
    $(s,y)$ belongs to $\mathcal{C}_{(2 + \frac{1}{C_0})\epsilon}$. We then obtain
    \begin{align*}
     R &\leq |x - x_0| + C_0 t \leq |x-y| + C_0(t-s) + |y - x_0| + C_0 s \\
     &\leq \epsilon + C_0 \epsilon + R - C_0\Big(2 + \frac{1}{C_0} \Big) \epsilon = R - C_0 \epsilon,
    \end{align*}
    which is a contradiction. This means that $\mathcal{C}^C + B(0,\epsilon) \subseteq \mathcal{C}_{(2 + \frac{1}{C_0})\epsilon}^C$ 
    for all $\epsilon > 0$. We thus arrive at
    \begin{equation*}
     \supp f_\epsilon \cap \Omega \subseteq (\supp f + B(0,\epsilon)) \cap \Omega \subseteq \mathcal{C}_{(2 + \frac{1}{C_0})\epsilon}^C \cap \Omega
    \end{equation*}
    for all $\epsilon > 0$. Analogously, we derive that
    \begin{align*}
     &\supp g_\epsilon \cap \Gamma \subseteq (\supp g + B(0,\epsilon)) \cap \Gamma \subseteq \mathcal{C}_{(2 + \frac{1}{C_0})\epsilon}^C \cap \Gamma, \\
     &\supp u_{0,\epsilon} \cap \R^3_+ \subseteq ((\supp u_0 \cup (\supp g(0) \times \{0\})) + B(0,\epsilon)) \cap \R^3_+ \\
     &\hspace{6.5em} \subseteq \mathcal{C}_{t = 0,(2 + \frac{1}{C_0})\epsilon}^C \cap \R^3_+,
    \end{align*}
    for all $\epsilon > 0$, where  
    $\mathcal{C}_{t = 0,(2 + \frac{1}{C_0})\epsilon}^C = \R^3 \setminus \mathcal{C}_{t = 0,(2 + \frac{1}{C_0})\epsilon}$. Steps II) and III) 
    now show that the unique solution $u_\epsilon \in C(\clJ, \Ltwoh)$ of~\eqref{IBVP} with inhomogeneity $f_\epsilon$, 
    boundary value $g_\epsilon$, and initial value $u_{0,\epsilon}$ vanishes on $\mathcal{C}_{(2 + \frac{1}{C_0})\epsilon}$, 
    i.e., $u_\epsilon(t,x) = 0$ for almost all $(t,x) \in \mathcal{C}_{(2 + \frac{1}{C_0})\epsilon}$ for each $\epsilon > 0$.
    
    Take a monotonically decreasing sequence $(\epsilon_n)_n$ in $(0,1)$ with $\epsilon_n \rightarrow 0$ as $n \rightarrow \infty$. 
    By Theorem~\ref{TheoremExistenceAndUniquenessOnDomain} there is a constant $C_3$ and a number $\gamma > 0$ such that
    \begin{align*}
     \Gnorm{0}{u_{\epsilon_n} \!-\! u}^2 &\leq C_3 \Ltwohn{u_{0, \epsilon_n} \! - \! u_0}^2 \! + \! C_3\|g_{\epsilon_n} \!- \! g\|_{E_{0,\gamma}}^2 \! + \!\frac{C_3}{\gamma} \Ltwoan{f_{\epsilon_n} \! - \! f}^2 \rightarrow 0
    \end{align*}
    as $n \rightarrow \infty$, in particular $(u_{\epsilon_n})_n$ tends to $u$ in $\Ltwoa$ as $n \rightarrow \infty$. Consequently, there is a 
    subsequence, which we again denote by $(u_{\epsilon_n})_n$, which  
    converges pointwise almost everywhere to $u$. Since $\mathcal{C}_{(2 + \frac{1}{C_0})\epsilon_n} \subseteq \mathcal{C}_{(2 + \frac{1}{C_0})\epsilon_m}$ 
    for all $m > n$, we infer that $u(t,x) = 0$ for almost all $(t,x) \in \mathcal{C}_{(2 + \frac{1}{C_0})\epsilon_n}$ 
    for all $n \in \N$. Hence, 
    \begin{equation*}
     u(t,x) = 0 \quad \text{for almost all } (t,x) \in \bigcup_{n \in \N} \mathcal{C}_{(2 + \frac{1}{C_0})\epsilon_n} = \mathcal{C}. \qedhere
    \end{equation*}
\end{proof}

We also formulate the finite propagation speed property using the forward light cone, cf.~\cite{BahouriCheminDanchin}. 
This version shows that if the data is supported on a forward light cone, then also the solution is supported on this cone.
\begin{cor}
 \label{CorollaryFinitePropagationSpeedForwardCone}
 Let $\eta > 0$, $A_0 \in \Fupdwl{3}{c}{\eta}$, $A_1, A_2 \in \Fcoeff{3}{cp}$, $A_3 = A_3^{\operatorname{co}}$,
 $D \in \Fuwl{3}{c}$, and $B = B^{\operatorname{co}}$. We set 
 \begin{align*}
  C_0 = \frac{1}{\eta} \, \sum_{j = 1}^3 \|A_j\|_{L^\infty(\Omega)}.
 \end{align*}
 Let $R > 0$ and $x_0 \in \overline{\R^3_+}$. We define the forward cone $\mathcal{K}$ by
 \begin{equation*}
  \mathcal{K} = \{(t,x) \in \R \times \R^3 \colon |x - x_0| \leq R + C_0 \,t \}.
 \end{equation*}
 Let $f \in \Ltwoa$, $g \in L^2(J, H^{1/2}(\partial \R^3_+))$, and $u_0 \in \Ltwoh$ such that
 \begin{equation*}
  \begin{aligned}
    f&= 0 \quad &&\text{on } \Omega \setminus \mathcal{K}, \\
    g&= 0 	&&\text{on } (J \times \partial \R^3_+) \setminus \mathcal{K} , \\
    u_0&= 0 	&&\text{on } \R^3_+ \setminus \mathcal{K}_{t=0},
  \end{aligned}
 \end{equation*}
 where 
 $\mathcal{K}_{t=0} = \{x \in \R^3 \colon (0,x) \in \mathcal{K}\}$. Then the unique solution $u \in C(\clJ, \Ltwoh)$ of the linear initial boundary value problem~\eqref{IBVP} 
 with inhomogeneity $f$, boundary value $g$, and initial value $u_0$ is supported in the cone $\mathcal{K}$, 
 i.e., 
 \begin{equation*}
  u(t,x) = 0 \quad \text{for almost all } (t,x) \in \Omega\setminus \mathcal{K}.
 \end{equation*}
\end{cor}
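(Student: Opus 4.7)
\smallskip

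\noindent\textbf{Proof proposal.} The plan is to reduce the statement to Theorem~\ref{TheoremFinitePropagationSpeed} by choosing, for each point outside the forward cone, a suitable backward cone whose data vanish. Fix an arbitrary point $(t^*, x^*) \in \Omega \setminus \mathcal{K}$, so that $|x^* - x_0| > R + C_0 t^*$, and set $\rho = |x^* - x_0| - R - C_0 t^* > 0$. Pick $\varepsilon \in (0, \rho/C_0)$ small enough so that $t^* + \varepsilon < T$, and consider the backward cone apexed at $(t^* + \varepsilon, x^*)$ with slope $C_0$,
\[
  \mathcal{C}^\varepsilon = \{(t,x) \in \R \times \R^3 : |x - x^*| < R'' - C_0 t\}, \qquad R'' := C_0(t^* + \varepsilon).
\]
This is exactly a cone of the type treated in Theorem~\ref{TheoremFinitePropagationSpeed}, with $x_0$ replaced by $x^*$ and $R$ replaced by $R''$.

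I would first verify that $\mathcal{C}^\varepsilon \cap \mathcal{K} = \emptyset$. Indeed, for any $(t,x) \in \mathcal{C}^\varepsilon$ with $t \ge 0$, the triangle inequality together with $|x-x^*| < R'' - C_0 t$ gives
\[
  |x - x_0| \geq |x^* - x_0| - |x - x^*| > (R + C_0 t^* + \rho) - (R'' - C_0 t) = R + C_0 t + (\rho - C_0 \varepsilon) > R + C_0 t,
\]
so $(t,x) \notin \mathcal{K}$. Consequently the vanishing assumptions of the corollary yield $f = 0$ on $\mathcal{C}^\varepsilon \cap \Omega$, $g = 0$ on $\mathcal{C}^\varepsilon \cap (J \times \partial \R^3_+)$, and $u_0 = 0$ on $\mathcal{C}^\varepsilon_{t=0} \cap \R^3_+$, which are precisely the hypotheses of Theorem~\ref{TheoremFinitePropagationSpeed} with the modified center and radius. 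Applying that theorem gives $u(t,x) = 0$ for almost every $(t,x) \in \mathcal{C}^\varepsilon \cap \Omega$.

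Since $(t^*, x^*)$ lies in the interior of $\mathcal{C}^\varepsilon$ (as $|x^* - x^*| = 0 < C_0 \varepsilon = R'' - C_0 t^*$), the set $\mathcal{C}^\varepsilon \cap \Omega$ is an open neighborhood of $(t^*, x^*)$ in $\Omega$ on which $u$ vanishes almost everywhere. Doing this for every $(t^*, x^*) \in \Omega \setminus \mathcal{K}$ and using the Lindelöf property of the second-countable open set $\Omega \setminus \mathcal{K}$ to extract a countable subcover, we conclude that $u = 0$ almost everywhere on $\Omega \setminus \mathcal{K}$, as claimed. There is no real obstacle here; the geometric verification $\mathcal{C}^\varepsilon \cap \mathcal{K} = \emptyset$ is the only point that requires care, and the small slack parameter $\varepsilon$ is introduced precisely to make $(t^*, x^*)$ an \emph{interior} point of the auxiliary backward cone, which is needed to upgrade a statement holding almost everywhere on the cone to one holding almost everywhere on $\Omega \setminus \mathcal{K}$.
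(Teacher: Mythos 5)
Your reduction is correct and is essentially the intended one: the paper only says the corollary "can be reduced to Theorem~\ref{TheoremFinitePropagationSpeed}" and refers to the dissertation for the details, and the details you supply are the natural ones, namely covering the open set $\Omega\setminus\mathcal{K}$ by small backward cones of slope $C_0$ that are disjoint from $\mathcal{K}$, applying the theorem to each, and using a countable subcover to conclude $u=0$ a.e. The geometric verification $\mathcal{C}^\varepsilon\cap\mathcal{K}=\emptyset$ is carried out correctly (the slack $\varepsilon<\rho/C_0$ gives the strict inequality), and the choice of apex $(t^*+\varepsilon,x^*)$ ensures $(t^*,x^*)$ is an interior point of the auxiliary backward cone, which is exactly what the covering argument needs.
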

The assertion can be reduced to Theorem~\ref{TheoremFinitePropagationSpeed}, see~\cite[Corollary~6.2]{SpitzDissertation} 
for details.

\begin{rem}
   \label{RemarkFinitePropagationSpeedNonlinear}
   In the framework of Theorem~\ref{TheoremLocalWellposednessNonlinear} assume that the data vanish on a backward 
   light cone or outside of a forward light cone, see Theorem~\ref{TheoremFinitePropagationSpeed} 
   respectively Corollary~\ref{CorollaryFinitePropagationSpeedForwardCone} for the precise statement. Then also 
   the solution of the nonlinear problem~\eqref{EquationNonlinearIBVP} vanishes on the backward respectively 
   forward light cone. This assertion follows from the simple observation that 
   the solution $u$ of~\eqref{EquationNonlinearIBVP} also solves the linear problem~\eqref{EquationIBVPIntroduction} 
   respectively~\eqref{IBVP}  with coefficients $\chi(u)$ and $\sigma(u)$. 
   Theorem~\ref{TheoremFinitePropagationSpeed} respectively Corollary~\ref{CorollaryFinitePropagationSpeedForwardCone}
   then yield the assertion.
  \end{rem}
 
 
 \vspace{2em}
 \textbf{Acknowledgment:}
 I want to thank my advisor Roland Schnaubelt for helpful feedback on this research 
and valuable advice regarding this article. Moreover, I gratefully acknowledge 
financial support by the Deutsche Forschungsgemeinschaft (DFG) through CRC 1173.

\vspace{1em} 
 
 \emergencystretch = 1em
 \printbibliography
\vspace{2em}
\end{document}